\newcommand{\real}{\mathbb{R}} 
\newcommand{\norm}[1]{\left\Vert {#1} \right\Vert} 
\newcommand{\qedsymbol}{$\square$}
\newcommand{\ie}{{\em i.e.}, }
\newcommand{\eg}{{\em e.g.}, }
\newcommand{\bzero}{{\mathbf 0}}
\newcommand{\bb}{{\mathbf b}}
\newcommand{\bc}{{\mathbf c}}
\newcommand{\bd}{{\mathbf d}}
\let\be\relax  
\newcommand{\be}{{\mathbf e}}
\newcommand{\bh}{{\mathbf h}}
\newcommand{\bk}{{\mathbf k}}
\newcommand{\bp}{{\mathbf p}}
\newcommand{\bq}{{\mathbf q}}
\newcommand{\br}{{\mathbf r}}
\newcommand{\bu}{{\mathbf u}}
\newcommand{\bv}{{\mathbf v}}
\newcommand{\bw}{{\mathbf w}}
\newcommand{\bx}{{\mathbf{x}}}
\newcommand{\by}{{\mathbf y}}
\newcommand{\bz}{{\mathbf z}}
\newcommand{\bba}{{\mathbf A}}
\newcommand{\bbb}{{\mathbf B}}
\newcommand{\bbi}{{\mathbf I}}
\newcommand{\bbp}{{\mathbf P}}
\newcommand{\bbq}{{\mathbf Q}}
\newcommand{\balpha}{{\boldsymbol \alpha}}
\newcommand{\bbeta}{{\boldsymbol \beta}}
\newcommand{\bchi}{{\boldsymbol \chi}}
\newcommand{\bkappa}{{\boldsymbol \kappa}}
\newcommand{\bomega}{{\boldsymbol \omega}}
\newcommand{\bzeta}{{\boldsymbol \zeta}}
\newcommand{\blambda}{{\boldsymbol \lambda}}
\newcommand{\btheta}{{\boldsymbol \theta}}
\newcommand{\bnu}{{\boldsymbol \nu}}
\newcommand{\bxi}{{\boldsymbol \xi}}
\newcommand{\bell}{{\boldsymbol \ell}}
\newcommand{\bvarpi}{{\boldsymbol \varpi}}
\DeclareMathOperator*{\dist}{Dist}
\DeclareMathOperator*{\diag}{Diag}
\DeclareMathOperator*{\pinf}{inf\vphantom{p}}
\newcommand{\bigzero}{\mbox{\normalfont\Large\bfseries 0}}
\newcommand{\bigI}{\mbox{\normalfont\Large\bfseries I}}
\newcommand{\rvline}{\hspace*{-\arraycolsep}\vline\hspace*{-\arraycolsep}}
\newcommand{\ubar}[1]{\underaccent{\bar}{#1}}
\newcommand{\utilde}[1]{\underaccent{\tilde}{#1}}
\newcommand\shimrit[1]{{\color{black}#1}}
\def\BIBand{and}%
\begin{document}
	
	
	
	\RUNAUTHOR{Postek and Shtern}
	
	\RUNTITLE{First-order algorithms for robust optimization}
	\TITLE{First-order algorithms for robust optimization problems via convex-concave saddle-point Lagrangian reformulation}
	
	\ARTICLEAUTHORS{%
		\AUTHOR{Krzysztof Postek}
		\AFF{Faculty of Electrical Enginnering, Mathematics and Computer Science,
			Delft University of Technology, Delft, The Netherlands, \EMAIL{k.s.postek@tudelft.nl}, \URL{}}
		\AUTHOR{Shimrit Shtern}
		\AFF{Faculty of Industrial Engineering and Management, Technion - Israel Institute of Technology, Haifa, Israel, \EMAIL{shimrits@technion.ac.il}, \URL{}}
	}
	
	\ABSTRACT{%
		Robust optimization (RO) is one of the key paradigms for solving optimization problems affected by uncertainty. Two principal approaches for RO, the robust counterpart method and the adversarial approach, potentially lead to excessively large optimization problems. For that reason, first order approaches, based on online-convex-optimization, have been proposed \cite{bental2015oracle,ho2018online} as alternatives for the case of large-scale problems. However, these methods are either stochastic in nature or involve a binary search for the optimal value. We show that this problem can also be solved with deterministic first-order algorithms based on a saddle-point Lagrangian reformulation that avoid both of these issues. Our approach recovers the other approaches' $\mathcal{O}(1/\epsilon^2)$ convergence rate in the general case, and offers an improved $\mathcal{O}(1/\epsilon)$ rate for problems with constraints which are affine both in the decision and in the uncertainty. Experiment involving robust quadratic optimization demonstrates the numerical benefits of our approach.
	}%
	
	
	
	
	
	\maketitle
	
	\section{Introduction}
	When solving optimization problems, one often has to deal with uncertainty present in the parameters of the objective and constraint functions. This uncertainty may stem from measurement, implementation or prediction errors. A common paradigm used to ensure that the solution remains feasible under uncertainty is \emph{robust optimization} (RO) \cite{bental2009robust}. In RO, the uncertainty is assumed to be adversarial to the decision maker and to lie in a predefined \emph{uncertainty set}, and the decision maker finds the best solution which remains feasible for all parameter values within this set.
	
	Any RO algorithm involves simultaneous solving of two problems: the decision maker's problem of finding the best-possible decision which is feasible for the given uncertainty set, and nature's implicit adversarial problem of selecting the worst possible realization of the parameters.
	
	Denoting the decision maker's decision as $\bx$, taken from a predefined set $X$, and the uncertain parameter for constraint $i$ as $\bz_i$, taken from a predefined uncertainty set $Z^i$, a general RO problem is given by:
	\begin{align}
		\min\limits_{\bx\in X} \ &  \bc^\top\bx  \label{eq:basic_problem} \\
		\text{s.t.} \ &  \sup\limits_{\bz_i \in Z^i} g_i(\bx, \bz_i) \leq 0, & i \in [m]. \nonumber
	\end{align}
	In this problem, the decision maker minimizes over $\bx\in X$, while the nature selects $\bz\in Z=Z^1\times Z^2\times\ldots \times Z^m$ such that the left-hand sides of the inequality constraints are maximized, in an attempt to violate the constraints.
	
	\vspace{5pt}
	\noindent{\bf{Standard methods.}} There are two key methods for handling the opposite optimization problems simultaneously: (1) the \emph{robust counterpart} (RC) reformulation method \cite{BenTal2015,bental2009robust}, (2) \emph{adversarial approach}, also known as cutting planes \cite{bienstock2007histogram,mutapcic2009cutting}. In the RC approach, nature's maximization problem for each constraint is dualized, and the objective of the dual problem is required to be nonpositive. In this way, a constraint is substituted by a system of inequalities that are satisfied for a given $\bx$ if and only if the original left-hand sides are nonpositive. This method's key advantage is that it requires to solve only one optimization problem and it ensures by-design the robustness of the solution. Its disadvantage is the increase of the problem size due to the added dual variables and constraints generated by dualizing nature's problems. Moreover, for strong duality to hold for nature's problem in each constraint, it has to be convex, or equivalent to a convex problem.
	
	In the adversarial approach, a finite subset $\ubar{Z} \subset Z$ of scenarios is iteratively built up, until it contains enough points to ensure that $\bx$ is feasible for all realizations in the subset if and only if it is (almost) feasible for all realizations in $Z$. The set $\ubar{Z}$ is intialized using an arbitrarily chosen realization $\bz \in Z$. Then two steps are repeated alternatingly: an optimization step, in which a solution $\bx$ which minimizes the objective function and is feasible for $\bz \in \ubar{Z}$ is found; a \emph{pessimization step}, in which a new realization $\bz$ violating at least one constraint is found and added to $\ubar{Z}$. This iterative procedure continues until no violating scenario is found. While this method is simple and enables solving problems in which nature's problem is not necessarily convex, the size of $\ubar{Z}$ increases at each iteration, which may result in extremely large optimization problems in $\bx$ during the optimization step.
	
	\vspace{5pt}
	\noindent{\bf{Need for lower-order approaches.}} Both of the above methods potentially lead to excessively large optimization problems which creates space for approaches in which the decision maker's and the adversary's problems are simultaneously solved in a lightweight fashion. A recently suggested idea \cite{bental2015oracle,ho2018online} is to use online convex optimization to solve problem \eqref{eq:basic_problem}, proving that the number of iterations needed to obtain an \shimrit{$\epsilon$-feasible $\epsilon$-optimal solution is $O(\log(\frac{1}{\epsilon})\frac{1}{\epsilon^2})$}. Through the online optimization lens, the robust problem is seen as a problem of minimizing a partly unknown objective with likewise constraints, whose shape is learned throughout the algorithm via samples.
	
	The approach of \cite{bental2015oracle} consists in iteratively solving a nominal version of \eqref{eq:basic_problem} in which the set $Z$ is replaced by a fixed realization $\bz$. The value of $\bz$ is updated at each iteration through first-order updates/pessimization and randomization. The approach of \cite{ho2018online} is to use binary search to determine the minimal $\tau$ for which the feasibility problem
	$$
	\{ \bc^\top\bx \leq \tau , \ \sup_{\bz \in Z} g_i(\bx, \bz) \leq 0, \quad i \in [m] \}
	$$
	has a solution up to a given accuracy. This requires running an online first-order algorithm for each tested $\tau$. Each of the feasibility problems is solved by a first order iteration on $\bx$, simultaneously with a pessimization/first order steps on the dual parameter $\bz$.
	
	Thanks to the online optimization framework, both \cite{bental2015oracle,ho2018online} work directly with the functions $\{g_i(\cdot, \cdot)\}_{i\in [m]}$, without the need to build an ever-increasing list of scenarios, or to dualize the constraints. This has a price: the functions $g_i$ have to be convex-concave, the set $X$ needs to be bounded, and the maximum achievable convergence rate is $\mathcal{O}(1/\epsilon^2)$ to obtain an $\epsilon$-feasible and $\epsilon$-optimal solution. Also, \cite{bental2015oracle} requires to solve multiple nominal problems, while in \cite{ho2018online} one needs to run binary search for the optimal objective value.
	
	\shimrit{The method in \cite{ho2018online}, was also extended in \cite{ho2019exploiting} to cases in which the functions $g_i$ have additional characteristics to obtain $O(1/\epsilon)$ rate of convergence. Specifically, this result requires that for all $i\in[m]$ either $g_i(\cdot,\bz)$ or $-g_i(\bx,\cdot)$ is strongly convex for any $\bz\in Z$ or $\bx\in X$, respectively, and that the function is smooth in the remaining variable. In this paper, we chose for simplicity, to avoid making such further assumptions on the problem's structure, and instead focus on two cases: the case where $g_i(\cdot,\cdot)$ are continuous real-valued functions, and the case where $g_i$ are biaffine functions. } 
	
	\vspace{5pt}
	\noindent{\bf Research questions.} Two questions that arise from the above review are as follows. 
	\begin{enumerate}
		\item \emph{Under what conditions is \eqref{eq:basic_problem} amenable to applying single-run deterministic, light-weight first order methods?}
		\item \emph{Can the guarantees obtained from such algorithms be transformed into meaningful guarantees for the robust problem \eqref{eq:basic_problem} in terms of the optimality and feasibility gaps?}
	\end{enumerate}
	\vspace{5pt}
	\noindent{\bf Contribution.}
	We address these two questions by leveraging a natural convex-concave saddle-point reformulation of \eqref{eq:basic_problem}, based on its Lagrangian. \shimrit{The lifted Lagrangian of problem~\ref{eq:basic_problem}, generally given by
		\begin{equation}\label{eq:lagrangian}\min_{\bx\in X, \bu\in U} \bar{L}(\bx,\bu),\end{equation}
		where $\mathcal{L}$ is a convex in $\bx$ and concave in the lifted variable $\bu$.} Although the existence of such a formulation was noted and discussed in \cite[Appendix A]{ho2018online}, it has been claimed that the problem loses a lot of convenient structure due to the lifting.  However, we show two simple settings under which \shimrit{\eqref{eq:lagrangian}} is amenable to the use of first order algorithms for convex-concave saddle point problems, not only theoretically but also practically. 
	
	Answering the first research question, we start by carefully deriving conditions under which we can map saddle points of this formulation to optimal solutions of problem~\eqref{eq.robust.problem}. Next we turn to solve this formulation using simple first order algorithms.
	
	\shimrit{There is an abundance of applicable first-order algorithms, usually derived for solving variational inequalities of monotone operators (see \cite[Chapter 6]{LectureNem94}), each demanding a different set of assumptions. Indeed, most algorithms demand that the part of the saddle point function connecting the primal and dual variables be smooth in both (\eg \citep{LectureNem94,nemirovski2004prox,juditsky2011first,gidel2017frank} and references therein), which is not the case in our general Lagrange derivation. The convergence of such algorithms
		is usually given by $\mathcal{L}(\bar{\bx}^N,\bu^*)-\mathcal{L}(\bx^*,\bar{\bu}^N)\leq \epsilon_N$ .
		For convenience, in Table~\ref{tbl:sp_algs}, we summarize the known state-of-the-art rates of convergence under different assumptions on the Lagrangian function, the implied assumptions on the
		structure of the original robust problem (See Section~\ref{sec:algorithms} for details), and well-known algorithms achieving these rates. Faster rates are only known for more restrictive assumptions on the functions involved (such as strong convexity \citep{ouyang2021lower,zhang2022lower}). In this paper, in order to avoid assumptions that do not generally hold in the robust setting, we focus on two simple settings. }
	
	\begin{table}[h]\label{tbl:sp_algs}\caption{Existing Saddle Point Algorithms}
		\begin{center}
			\begin{threeparttable}
				\shimrit{\footnotesize
					\begin{tabular}{p{3cm}p{3cm}llp{4.5cm}l}
						\toprule
						SP Problem  & RO Problem    & Iteration cost & Rate of  & Methods\\
						Assumptions & Assumptions &&$\epsilon_N$&\\
						\hline
						\multirow{4}{3cm}{Compact $X$ and $U$, $\mathcal{L}$ Lipschitz continuous on $X\times U$} & Compact $X$ and $Z$, $g_i$ are Lipschitz continuous on $X\times Z$, Slater condition holds &  \multirow{4}{*}{$\mathcal{T}_{\bx}(X)+\mathcal{T}_{\bu}(U)$} &  \multirow{4}{*}{$O(1/\sqrt{N})$} & SPSG / Arrow-Hurwicz-Uzawa method  \citep{auslender2009projected,nedic2009subgradient}\\
						\hline
						\multirow{2}{*}{$\mathcal{L}(\bx,\bu)$ biaffine} & \multirow{2}{*}{$g_i$ are biaffine} & \multirow{2}{*}{$\mathcal{T}_\bx(X)+\mathcal{T}_{\bu}(U)$} & \multirow{2}{*}{$O(1/{N})$} & Chambole-Pock \citep{chambolle2011first}\\ 
						\hline
						\multirow{9}{3cm}{Compact $X$ and $U$, and  $\mathcal{L}(\bx,\bu)$ has a jointly Lipschitz continuous gradient on $X\times U$} &  \multirow{9}{3cm}{Compact $X$ and $Z$, and $\lambda_ig_i(\bx,\tilde{\bz}_i/\lambda_i)$ have Lipschitz continuous gradient in both $\bx$ and $u_i=(\tilde{\bz}_i,\lambda_i)$} &   \multirow{5}{*}{$2(\mathcal{T}_{\bx}(X)+\mathcal{T}_{\bu}(U))$}  & \multirow{5}{*}{$O(1/N)$} & {Extra gradient / Mirror-prox \citep{korpelevich1977extragradient,tseng1991applications,nemirovski2004prox},\newline  Dual extrapolation \citep{nesterov2007dual}}
						\\
						\cline{3-5}
						&&\multirow{4}{*}{$\mathcal{T}_{\bx}(X)+\mathcal{T}_{\bu}(U)$}&\multirow{4}{*}{$O(1/N)\tnote{**}$}&Forward-Reflected Backward / Optimmistic mirror descent
						\citep{Malitsky2020,alacaoglu2021forward}\\
						\bottomrule
				\end{tabular}}
				\footnotesize
				\begin{tablenotes}
					\item[*] $\mathcal{T}_{\by}(C)$ indicate the computational cost of computing the (sub)gradient of $\mathcal{L}$ with respect to variable $\by$ and performing an orthogonal projection on set $C$.
					\item[**] This rate was proven without the compactness assumption for a weaker result.
				\end{tablenotes}
			\end{threeparttable}
		\end{center}
	\end{table}
	
	In the first setting, we make similar assumptions to \cite{ho2018online}, \ie that for all $i\in[m]$, $g_i(\cdot,\cdot)$ are continuous real-valued functions, and $X$ and $Z_i$ are compact sets. Under this setting we require to use more general saddle-point algorithms, such as the one based on the subgradients of the functions, as the method discussed in \shimrit{\cite[Chapter 9]{LectureNem94} and analyzed in \cite{nedic2009subgradient,auslender2009projected} (SGSP), which achieves a $O(1/\sqrt{N})$} rate of convergence, and can be easily extended to the Bregman setting (Mirror Descent). 
	
	In the second setting considered, we treat $g_i(\cdot,\cdot)$ that are biaffine. It is established that in this case, a first-order algorithm can not obtain a better \shimrit{iteration complexity} than $O(1/\epsilon)$ \cite{ouyang2021lower}, and thus we restrict ourselves to algorithms which achieve this optimal rate. Although the above mentioned algorithms for smooth functions fit this setting, they require additional assumptions such as compactness of the sets and multiple gradient and projection steps per iteration. \shimrit{Therefore, we focus on the algorithm presented in \cite{chambolle2011first} (CP),} which is less restrictive and cheaper computationally, as it only requires one gradient computation and one projection on the sets per iteration. \shimrit{This algorithm can be used to obtain a sequence contained in $X$ with the gap converging at a rate of $O(1/N)$.}
	
	To address the second research question, we need to deal with the fact that saddle-point algorithms give convergence results in terms of ergodic duality gap. As we are interested in feasibility and optimality convergence rates for problem~\eqref{eq:basic_problem}, we defined a general notion of \emph{Ergodically Bounded (EB) algorithms}. We then that applying an EB algorithm to the saddle point formulation of~\eqref{eq:basic_problem} yields the same feasibility and optimality convergence rates as the algorithm itself. This definition allowed us to unify the analysis, as it is sufficient to show that a given algorithm, just like the two we consider, is an EB algorithm. Thus, the framework laid out in this paper can serve as a basis for analysis of other saddle point-algorithms in this context. 
	
	\shimrit{We note that the choice of algorithms for these two settings is somewhat arbitrary, since the aim of this paper is to illustrate the power of using the Lagrangian reformulation, and the general technique to convert a given saddle-point algorithm's gap rate of convergence to rates of obtaining feasibility and optimality for the robust problem \eqref{eq:basic_problem}.}
	Thus, a great deal of this paper is devoted to technical but important issues in applying the above mentioned algorithms in these settings. Specifically, we detailed sufficient and realistic assumptions needed to be satisfied by the robust problem \eqref{eq.robust.problem}, and derive bounds required for either the application of the algorithm or for showing that the algorithm assumptions are indeed satisfied. Moreover, to show the practicality of the suggested algorithms, we prove that when $Z^i$ are projection-friendly sets one may also find the projection onto the lifted space relatively easily, either analytically or by using bi-section. Finally, we show that the saddle-point formulation actually allows for more flexibility and enables to tackle problems where either $Z^i$ or $X$ are more complicated by using splitting techniques, where these problems prove to be more challenging for the previously suggested methods.
	
	\vspace{5pt}
	\noindent{\bf{Paper Structure.}} The remainder of the paper is structured as follows. In Section~\ref{sec:setting}, we introduce the problem we solve along with the corresponding assumptions and the Lagrangian saddle-point reformulation. In Section~\ref{sec:algorithms}, we introduce the two algorithms for the case of simple uncertainty sets and state the corresponding convergence results. In Section~\ref{sec:generalizations}, we present the convergence analysis of the algorithms for a generalized problem form through a unified framework of showing that both are ergodically bounded (EB) algorithms. In Section~\ref{sec:numerics}, we compare the performance of our SGSP approach to the online first-order approaches of \cite{ho2018online} and the adversarial approach on randomly sampled robust quadratic optimization problems with and without constraints. Section~\ref{sec:conclusions} concludes the paper. All proofs not given in the body are given in the Supplementary Material.
	
	\vspace{5pt}
	\noindent{\bf{Notation}}
	Throughout the paper we use bolded small letters $\bx$, $\bz$ for vectors, and bolded capital letters $\bbp$ for matrices, and capital letters for sets. For any $k\in\mathbb{N}$, we use shorthand notation $[k]$ to denote the  set of indexes $\{1, 2,\ldots, k \}$. Unless specified otherwise, $\| \cdot \|$ refers to the Euclidean norm.
	
	\section{Problem setting and assumptions} \label{sec:setting}
	\subsection{Introduction}
	In this paper, we consider the following general RO problem.
	\begin{align}
		\min\limits_{\bx \in X} \ &  \bc^\top\bx \label{eq.robust.problem} \\
		\text{s.t.} \ &  f_i(\bx)\coloneqq \max\limits_{\bz_i \in Z^i } g_i(\bx,\bz_i)\leq 0 && i \in [m] \nonumber \\
		& \bba \bx = \bb \nonumber
	\end{align}
	where $X\subseteq \real^n$ is a closed and convex set, for all $i \in [m]$ the set $Z^i\subset \mathbb{R}^{d_i}$ is a convex and compact set and without loss of generality $\bzero\in Z^i$, $g_i(\cdot,\bz_i):X\rightarrow \real$ is convex for any fixed $\bz_i\in Z^i$, $g_i(\bx,\cdot):Z^i\rightarrow \real$ is concave for any fixed $\bx\in X$, and $\bba\in\real^{r\times n}$, $\bb\in \real^r$.  Note that, contrary to Problem~\ref{eq:basic_problem}, here we allow to separate some of the affine constraints that may be involved in the definition of the domain of $\bx$ from the set $X$. 
	
	Note that formulation \eqref{eq.robust.problem} also encompasses robust problems involving uncertainty in the objective function, since such problems can be transformed to form \eqref{eq.robust.problem} using the epigraph formulation of the objective. Thus, formulation \eqref{eq.robust.problem} is general and includes many useful problems (\emph{c.f.}, \cite{bertsimas2011theory,gabrel2014recent}). 
	
	Functions $f_i(\bx)$ are known as the robust counterpart formulation of the robust constraint
	$$
	g_i(\bx,\bz_i)\leq 0, \;\forall \bz_i\in Z^i.
	$$
	Note that while $f_i(\bx)$ are convex functions of $\bx$ (as a maximum of convex functions), they are not necessarily easily representable due to their implicit formulation as maxima.
	
	Our aim is to solve \eqref{eq.robust.problem} through its saddle point Lagrangian formulation. For this formulation to be well-defined, we make three standard assumptions.
	\begin{assumption}\label{ass:Optimal} 
		Problem~\eqref{eq.robust.problem} has an optimal solution.
	\end{assumption}
	
	\begin{assumption}\label{ass:Slater}
		There exists $\hat{\bx}\in\text{int}(X)$ such that $\bba \hat{\bx} = \bb$ and there exists a $\epsilon_{\hat{\bx}} > 0$ such that $\hat{\bx}+\by\in X$ and $f_{i}(\hat{\bx}+\by) < 0$ for all $\norm{\by}\leq \epsilon_{\hat{\bx}}$.
	\end{assumption}
	
	\begin{assumption} \label{ass:full.rank.A}
		Matrix $\bba$ has full row rank.
	\end{assumption}
	With respect to Assumption~\ref{ass:Optimal}, if the problem does not have an optimal solution there are three options: (i) it is infeasible, in which case the uncertainty sets defined for the problem may be too large, (ii) it is unbounded, \ie, it might not be constrained enough, or (iii) it is bounded but the optimal solution is not attained, in which case we can restrict the set $X$ to a subset containing $\epsilon$-optimal solutions of the original problem. Assumption~\ref{ass:Slater}, known as the Slater condition, ensures that the problem is stable, \ie slight perturbations in the feasible set do not make the problem infeasible. Assumption~\ref{ass:full.rank.A} states that there are no redundant equality constraints.
	
	In order to solve problem~\eqref{eq.robust.problem} we consider its Lagrangian:
	\begin{equation}
		L(\bx,(\blambda, \bw))\equiv \bc^\top\bx +\sum_{i=1}^m\lambda_if_i(\bx) + \bw^\top (\bba \bx - \bb) \label{eq:basic_saddle_point}
	\end{equation}
	Specifically, we are interested in saddle points of function $L(\cdot,\cdot)$ , \ie points $(\bx^*,\blambda^*)$ which satisfy
	$$
	L(\bx^*,(\blambda, \bw))\leq L(\bx^*,(\blambda^*, \bw^*))\leq L(\bx,(\blambda^*, \bw^*)), \quad\forall \bx\in X, \lambda\in\real^m_+, \bw \in \mathbb{R}^r.
	$$
	Under Assumptions~\ref{ass:Optimal} and \ref{ass:Slater}, the Lagrangian function $L(\cdot,\cdot)$ has a saddle point, and $(\bx^*, (\blambda^*, \bw^*))$ is a saddle point of $L(\cdot,\cdot)$ if and only if $\bx^*$ and $(\blambda^*, \bw^*)$ are optimal solutions to the primal and dual problems respectively.
	Thus, instead of solving problem~\eqref{eq.robust.problem}, we want to find a solution to
	\begin{align}\label{eq:saddle_point_simple}
		\pinf_{\bx\in X}\sup_{\blambda\in \real^m_+, \bw \in \mathbb{R}^r} L(\bx,(\blambda, \bw)).
	\end{align}
	This reformulation eliminates the constraints and renders the problem as a saddle point one, enabling the use of various first-order methods. However, such methods require computing at each iteration not only the functions $f_i(\bx)$, but also their sub-gradients or proximal operators, which may be challenging with the implicitly-defined $f_i$ functions. We will therefore consider an alternative formulation where our goal is to work with functions $g_i$, and show how to obtain an $\epsilon$-optimal and $\epsilon$-feasible solution of problem~\eqref{eq.robust.problem}.
	\subsection{Conversion to a convex-concave saddle point problem}
	We begin our transformation of problem \eqref{eq:saddle_point_simple} plugging in the explicit definitions of of functions $f_i$, using functions $g_i$, as follows:
	\begin{align}
		\sup_{\blambda\in\real^m_+, \bw \in \mathbb{R}^r} \pinf_{\bx\in X} L(\bx,(\blambda, \bw))&= \sup_{\blambda\in\real^m_+, \bw \in \mathbb{R}^r}\pinf_{\bx\in X}\bc^\top\bx +\sum_{i=1}^m\lambda_i\sup_{\bz_i\in Z^i} g_i(\bx,\bz_i) + \bw^\top(\bba \bx - \bb) \nonumber\\
		&=\sup_{\blambda\in\real^m_+, \bw \in \mathbb{R}^r}\pinf_{\bx\in X}\max_{\bz_i\in Z^i,\;i \in [m]}\bc^\top\bx +\sum_{i=1}^m\lambda_i g_i(\bx,\bz_i) + \bw^\top(\bba \bx - \bb) \nonumber \\
		&=\sup_{\blambda\in\real^m_+, \bw \in \mathbb{R}^r}\sup_{\bz_i\in Z^i,\;i \in [m]}\pinf_{\bx\in X}\bc^\top\bx +\sum_{i=1}^m\lambda_i g_i(\bx,\bz_i) + \bw^\top(\bba \bx - \bb) \label{eq:nonconvex_saddle}
	\end{align}
	where the equalities follow from Sion's Theorem, the fact $Z^i$ are convex and compact, $X$ is convex, and $g_i$ are convex-concave. However, the resulting saddle point problem \eqref{eq:nonconvex_saddle} is over a function which is convex in $\bx$ but is not jointly concave in $\bz=(\bz_1,\ldots,\bz_m)$ and $\blambda=(\lambda_1,\ldots,\lambda_m)$. Since convergence results for saddle point algorithms typically require a convex-concave structure, we need to reformulate the problem to achieve such a structure. For this, we will use a change of variables $\tilde{\bz}_i=\lambda_i \bz_i$, and inversely 
	$$
	\bz_i=\begin{cases}\frac{\tilde{\bz}_i}{\lambda_i}, & \lambda_i>0,\\
		\bzero, &\text{otherwise}.
	\end{cases}
	$$
	Using this definition we have $\lambda_i g_i(\bx,\bz_i)=\lambda_i g_i(\bx,\tilde{\bz}_i / \lambda_i)$, and since $-g_i(\bx,\cdot)$ is convex for every $\bx$, $-\lambda_i g_i(\bx, \tilde{\bz}_i / \lambda_i)$ is jointly convex in $\bu_i=(\tilde{\bz}_i,\lambda_i)$ for every $\bx$, as a \emph{perspective} of a convex function \cite[Proposition 8.23]{bauschke2011convex}. Moreover,  $\lambda_i g_i(\bx,\tilde{\bz}_i / \lambda_i)$ is continuous for all $\bu_i\in U^i,$ where $U^i=\{\bu_i=(\tilde{\bz}_i,\lambda_i): \tilde{\bz}_i\in \lambda_iZ^i , \lambda_i\geq 0\}$, obtaining a value of $0$ whenever $\lambda_i=0$. Defining $\bu=(\bu_1,\ldots,\bu_m)$ and the set $U=U^1\times\ldots\times U^m$, 
	we have
	\begin{align}\label{eq:saddle_point_lifted}
		\sup_{\bu\in U, \bw \in \mathbb{R}^r}\pinf_{\bx\in X} \bar{L}(\bx,(\bu,\bw))& \coloneqq \sup_{\bu\in U, \bw \in \mathbb{R}^r}\pinf_{\bx\in X} \bc^\top\bx+\sum_{i=1}^m \lambda_i g_i \left( \bx,\frac{\tilde{\bz}_i}{\lambda_i} \right) + \bw^\top (\bba \bx - \bb).
	\end{align}
	For ease of notation, in henceforth we denote the perspective version of $g_i$ as $\tilde{g}_i(\bx,\bu_i)\equiv \lambda_i g_i(\bx, \tilde{\bz}_i / \lambda_i)$ where $\bu_i=( \tilde{\bz}_i,\lambda_i)$.
	
	The following result shows that solving \eqref{eq:saddle_point_lifted} is sufficient for solving \eqref{eq:saddle_point_simple}, \ie, the saddle points of $\bar{L}$ can be reduced to those of $L$.
	\begin{proposition}\label{prop:eq_saddle_points}
		Let $({\bx}^*,({\bu}^*, \bw^*))\in X\times U \times \mathbb{R}^r$, where $\bu^*=(\bu^*_1,\ldots,\bu^*_m)$ and $\bu_i^*=(\tilde{\bz}_i^*,\lambda^*_i)$ for $i \in [m]$. Then $({\bx}^*,({\bu}^*, \bw^*)) \in X\times U$ is a saddle point of $\bar{L}$ over $X\times U\times \mathbb{R}^r$ if and only if $(\bx^*,(\blambda^*, \bw^*))$ is a saddle point of $L$ over $X\times\real^m_+\times \mathbb{R}^r$.
	\end{proposition}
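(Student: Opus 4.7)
The plan is to establish a correspondence between saddle points of $L$ and $\bar{L}$ by exploiting the key identity
\begin{equation*}
\sup_{\bu \in U} \bar{L}(\bx, (\bu, \bw)) = \sup_{\blambda \in \real^m_+} L(\bx, (\blambda, \bw)), \qquad \forall \bx \in X,\ \bw \in \real^r,
\end{equation*}
which follows by parameterizing $\bu_i = (\lambda_i \bz_i, \lambda_i)$ with $\lambda_i \geq 0$, $\bz_i \in Z^i$ and recognizing that $\sup_{\bz_i \in Z^i} \lambda_i g_i(\bx, \bz_i) = \lambda_i f_i(\bx)$. This identity will be the algebraic workhorse of both implications.

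For the forward direction, I would start from a saddle point $(\bx^*, (\bu^*, \bw^*))$ of $\bar{L}$ and first extract primal feasibility and complementary slackness. Applying $\sup_{(\bu,\bw)}$ to $\bar{L}(\bx^*, (\bu, \bw)) \leq \bar{L}(\bx^*, (\bu^*, \bw^*))$ and invoking the identity forces $\bba \bx^* = \bb$ and $f_i(\bx^*) \leq 0$ (otherwise the supremum would be infinite), and yields $\bar{L}(\bx^*, (\bu^*, \bw^*)) = \bc^\top \bx^*$. Writing $\bar{L}(\bx^*, (\bu^*, \bw^*)) = \bc^\top \bx^* + \sum_i \lambda_i^* g_i(\bx^*, \bz_i^*)$ with $\bz_i^* = \tilde{\bz}_i^*/\lambda_i^*$ when $\lambda_i^* > 0$, and combining with $\lambda_i^* g_i(\bx^*, \bz_i^*) \leq \lambda_i^* f_i(\bx^*) \leq 0$, I obtain $\lambda_i^* g_i(\bx^*, \bz_i^*) = \lambda_i^* f_i(\bx^*) = 0$ for every $i$. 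The right saddle inequality for $L$ then drops out of the chain
\begin{equation*}
L(\bx, (\blambda^*, \bw^*)) \geq \bar{L}(\bx, (\bu^*, \bw^*)) \geq \bar{L}(\bx^*, (\bu^*, \bw^*)) = L(\bx^*, (\blambda^*, \bw^*)),
\end{equation*}
and the left saddle inequality from $f_i(\bx^*) \leq 0$, $\bba\bx^* = \bb$ and complementary slackness.

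For the reverse direction, given a saddle point $(\bx^*, (\blambda^*, \bw^*))$ of $L$, I would construct a matching $\bu^*$ via Danskin's theorem, which gives $\partial f_i(\bx^*) = \conv \bigcup_{\bz_i \in S_i(\bx^*)} \partial_\bx g_i(\bx^*, \bz_i)$, where $S_i(\bx^*) = \argmax_{\bz_i \in Z^i} g_i(\bx^*, \bz_i)$. Stationarity of $\bx^*$ for $L(\cdot, (\blambda^*, \bw^*))$ on $X$ produces a decomposition $\bzero = \bc + \sum_i \lambda_i^* \sum_k \alpha_{ik} \bff_{ik} + \bba^\top \bw^* + \bv$ with $\bv \in N_X(\bx^*)$, convex weights $\alpha_{ik}$, maximizers $\bz_{ik} \in S_i(\bx^*)$, and $\bff_{ik} \in \partial_\bx g_i(\bx^*, \bz_{ik})$. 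Setting $\bz_i^* := \sum_k \alpha_{ik} \bz_{ik}$ and $\bu_i^* := (\lambda_i^* \bz_i^*, \lambda_i^*)$, I would verify that $\bz_i^* \in S_i(\bx^*)$ using concavity of $g_i(\bx^*, \cdot)$ and that $\sum_k \alpha_{ik} \bff_{ik} \in \partial_\bx g_i(\bx^*, \bz_i^*)$ by averaging the subgradient inequalities for the $\bff_{ik}$ and applying concavity of $g_i(\by, \cdot)$ at $\bz_i^*$. Then $\bx^*$ is stationary for the convex function $\bar{L}(\cdot, (\bu^*, \bw^*))$ on $X$, hence a minimizer; together with $\bar{L}(\bx^*, (\bu^*, \bw^*)) = L(\bx^*, (\blambda^*, \bw^*)) = \sup_{\bu, \bw} \bar{L}(\bx^*, (\bu, \bw))$, this closes the saddle condition.

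The main obstacle is precisely this last construction: a single $\bz_i^* \in Z^i$ must simultaneously lie in $S_i(\bx^*)$ and carry the subgradient $\bff_i \in \partial f_i(\bx^*)$ demanded by the KKT decomposition, which in general is a genuine convex combination of subgradients at distinct maximizers. The trick of taking $\bz_i^*$ to be the same convex combination of the $\bz_{ik}$'s resolves this, but it relies crucially on Danskin's theorem and on the subgradient-of-perspective calculation sketched above. The indices with $\lambda_i^* = 0$ are automatic, since the corresponding $\bx$-dependent term of $\bar{L}$ vanishes irrespective of the choice of $\bz_i^*$.
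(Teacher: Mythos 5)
Your forward direction coincides with the paper's: both rest on the identity $\sup_{\bu\in U}\bar{L}(\bx,(\bu,\bw))=\sup_{\blambda\in\real^m_+}L(\bx,(\blambda,\bw))$ together with the pointwise bound $\tilde{g}_i(\bx,\bu_i^*)\le\lambda_i^* f_i(\bx)$, and your chain of inequalities is exactly the one in the paper, so there is nothing to add there. The reverse direction is where you genuinely diverge. The paper constructs the lifted maximizer by applying Sion's theorem to swap $\inf_{\bx\in X}$ with the maximization over the compact slice $\{(\tilde{\bz}_i,\lambda_i^*)\in U^i\}$, and then takes $\hat{\bz}$ to be an argmax of the resulting outer problem; this uses only convex-concavity and compactness and never touches subdifferentials. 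You instead pass through the first-order optimality condition for $\bx^*$ as a minimizer of $L(\cdot,(\blambda^*,\bw^*))$ on $X$, invoke Danskin/Valadier to express the needed element of $\partial f_i(\bx^*)$ as a convex combination $\sum_k\alpha_{ik}\bff_{ik}$ of subgradients at maximizers $\bz_{ik}$, and then average the maximizers themselves. Your verification that $\bz_i^*=\sum_k\alpha_{ik}\bz_{ik}$ remains in $S_i(\bx^*)$ and carries $\sum_k\alpha_{ik}\bff_{ik}$ as a subgradient of $g_i(\cdot,\bz_i^*)$ is correct — concavity of $g_i(\by,\cdot)$ is precisely what closes the averaged subgradient inequality — and this yields a more explicit, constructive $\bu^*$. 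The price is a set of regularity hypotheses the paper's route does not need: the subdifferential sum rule for $\bc^\top(\cdot)+\sum_i\lambda_i^* f_i+\delta_X$ requires a qualification condition (available here via the Slater point, though the proposition itself is stated without it), and the exact Danskin formula $\partial f_i(\bx)=\conv\bigcup_{\bz\in S_i(\bx)}\partial_\bx g_i(\bx,\bz)$ with no closure on the convex hull requires joint continuity of $g_i$; with only the closed convex hull, your target element might be reachable only as a limit of finite combinations. Neither issue is fatal in the paper's setting, but the Sion-based argument is shorter and assumption-free, which is presumably why the authors chose it.
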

	
	\section{Algorithms for solving saddle point formulation} \label{sec:algorithms}
	With the robust problem \eqref{eq.robust.problem} in the desired convex-concave structure \eqref{eq:saddle_point_lifted}, we move towards introducing two algorithms for solving it. As both algorithms will require boundedness of the optimal dual solution $(\blambda^*,\bw^*)$, either for running or for obtaining the convergence guarantees, we begin with stating the result that uses Assumptions~\ref{ass:Optimal}-\ref{ass:full.rank.A} to provide such bounds.
	\begin{proposition}\label{prop:bounded_lambda}
		Let Assumption~\ref{ass:Optimal} hold, let $\hat{\bx}$ be the point satisfying Assumption~\ref{ass:Slater}, let $(\bx^*,(\bu^*, \bw^*))$ be a saddle point of $\bar{L}$ on the set $X\times U \times \mathbb{R}^r$, and let $\ubar{v}$ be a strict lower bound on the optimal value of problem~\eqref{eq.robust.problem}. Then, $\bu^*=(\bu^*_1,\ldots, \bu^*_m)$, where $\bu_i^*=(\tilde{\bz}^*_i,\lambda^*_i)$, and $\bw^*$ satisfy
		\begin{align*}
			\lambda^*_{i} & \leq \bar{\lambda} := \frac{\bc^\top\hat{\bx}-\ubar{v}}{- \max_{i \in [m]} f_i(\hat{\bx})}\\
			\| \bu^*_{i} \| & \leq \bar{\lambda} \sqrt{1 + R_{i}^2} \\
			\| \bw^* \| & \leq R_\bw := \frac{1}{\sigma_{\min}(\bba)} \left( \frac{\bc^\top \hat{\bx} - \ubar{v}}{ \epsilon_{\hat{\bx}}} + \| \bc \| \right),
		\end{align*}
		where $R_{i} := \max_{\bz \in Z^i} \| \bz \|$ and $\sigma_{\min}(\bba) > 0$ is the smallest singular value of $\bba$.
	\end{proposition}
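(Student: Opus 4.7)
The plan is to first reduce the problem to a statement about saddle points of the original Lagrangian $L$ via Proposition~\ref{prop:eq_saddle_points}, and then extract each of the three bounds by plugging a carefully chosen primal point into the saddle-point inequality $L(\bx^*,(\blambda^*,\bw^*))\leq L(\bx,(\blambda^*,\bw^*))$. The natural points to plug in are $\hat\bx$ (for bounding $\blambda^*$) and $\hat\bx+\by$ for small $\by$ (for bounding $\bw^*$), exploiting the Slater point provided by Assumption~\ref{ass:Slater}.

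First I would set up: by Proposition~\ref{prop:eq_saddle_points}, $(\bx^*,(\blambda^*,\bw^*))$ with $\lambda_i^* = $ (last coordinate of $\bu_i^*$) is a saddle point of $L$ on $X\times\real^m_+\times\real^r$. Using the dual-side saddle inequality $L(\bx^*,(\blambda,\bw))\leq L(\bx^*,(\blambda^*,\bw^*))$ and letting $\|\bw\|\to\infty$ in the direction of $\bba\bx^*-\bb$, one forces $\bba\bx^*=\bb$; similarly, driving one $\lambda_i\to\infty$ forces $f_i(\bx^*)\leq 0$. Taking then $\blambda=\bzero$ and $\bw=\bw^*$ gives $\sum_i\lambda_i^* f_i(\bx^*)\geq 0$, and since each summand is $\leq 0$, we get complementary slackness and $L(\bx^*,(\blambda^*,\bw^*))=\bc^\top\bx^*\geq\ubar v$.

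For the bound on $\lambda_i^*$ (and hence on $\bu_i^*$), I would plug $\bx=\hat\bx$ into the primal-side inequality. Because $\bba\hat\bx=\bb$, the $\bw^*$-term vanishes, leaving
$$
\bc^\top\bx^* \;\leq\; \bc^\top\hat\bx + \sum_{i=1}^m \lambda_i^* f_i(\hat\bx),
$$
i.e.\ $\sum_i \lambda_i^*\bigl(-f_i(\hat\bx)\bigr)\leq \bc^\top\hat\bx-\ubar v$. Since every summand is nonnegative and $-f_i(\hat\bx)>0$, this yields the claimed bound on each $\lambda_i^*$. The bound on $\bu_i^*=(\tilde\bz_i^*,\lambda_i^*)$ then follows directly from the definition of $U^i$: if $\lambda_i^*>0$ one writes $\tilde\bz_i^*=\lambda_i^*\bz_i^*$ with $\bz_i^*\in Z^i$ and uses $\|\bz_i^*\|\leq R_i$, giving $\|\bu_i^*\|^2\leq (\lambda_i^*)^2(1+R_i^2)$; the case $\lambda_i^*=0$ forces $\tilde\bz_i^*=\bzero$ and is trivial.

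The main obstacle is the bound on $\bw^*$, which requires exploiting the full $\epsilon_{\hat\bx}$-ball of strict feasibility around $\hat\bx$ and Assumption~\ref{ass:full.rank.A}. The idea is to plug in $\bx=\hat\bx+\by$ for any $\by$ with $\|\by\|\leq\epsilon_{\hat\bx}$: feasibility of $\hat\bx+\by$ makes the $f_i$-terms nonpositive and $\bba(\hat\bx+\by)-\bb=\bba\by$, so the saddle inequality yields
$$
-\bw^{*\top}\bba\by \;\leq\; \bc^\top\hat\bx-\bc^\top\bx^* + \bc^\top\by \;\leq\; (\bc^\top\hat\bx-\ubar v) + \epsilon_{\hat\bx}\|\bc\|.
$$
Choosing $\by=-\epsilon_{\hat\bx}\,\bba^\top\bw^*/\|\bba^\top\bw^*\|$ (assuming $\bw^*\neq\bzero$, else the bound is trivial) maximizes the left-hand side, giving $\epsilon_{\hat\bx}\|\bba^\top\bw^*\|$ on the left. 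Finally, full row rank of $\bba$ gives $\|\bba^\top\bw^*\|\geq\sigma_{\min}(\bba)\|\bw^*\|$, which rearranges to the advertised bound on $\|\bw^*\|$.
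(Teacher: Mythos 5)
Your proposal is correct, and for the bounds on $\lambda_i^*$ and $\bu_i^*$ it follows essentially the same route as the paper: plug the Slater point $\hat{\bx}$ into the primal-side saddle inequality, use $\bba\hat{\bx}=\bb$ to kill the $\bw^*$ term, and read off the bound on $\blambda^*$ from $\sum_i \lambda_i^*(-f_i(\hat{\bx}))\leq \bc^\top\hat{\bx}-\ubar{v}$; the bound on $\bu_i^*$ is then the same direct computation from $\tilde{\bz}_i^*\in\lambda_i^*Z^i$. (One shared blemish: this derivation yields $\lambda_i^*\leq(\bc^\top\hat{\bx}-\ubar{v})/\min_j(-f_j(\hat{\bx}))$, i.e.\ $-\max_j f_j(\hat{\bx})$ in the denominator, whereas the statement writes $-\min_j f_j(\hat{\bx})$; the paper's own proof makes exactly the same silent swap, so this appears to be a typo in the definition of $\bar{\lambda}$ rather than a gap specific to your argument.) Where you genuinely diverge is the bound on $\bw^*$. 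The paper writes down the first-order optimality system $\bc+\bba^\top\bw^*+\sum_i\balpha_i+\bbeta=\bzero$ with $\balpha_i\in\partial_\bx\tilde{g}_i(\bx^*,\bu_i^*)$, $\bbeta\in\partial\delta_X(\bx^*)$, and then combines subgradient inequalities at $\hat{\bx}+\bkappa$ with complementary slackness to reach $\epsilon_{\hat{\bx}}\norm{\bc+\bba^\top\bw^*}\leq\bc^\top(\hat{\bx}-\bx^*)$. You instead plug $\bx=\hat{\bx}+\by$, $\norm{\by}\leq\epsilon_{\hat{\bx}}$, directly into $L(\bx^*,(\blambda^*,\bw^*))\leq L(\hat{\bx}+\by,(\blambda^*,\bw^*))$, drop the nonpositive $\lambda_i^*f_i(\hat{\bx}+\by)$ terms, and optimize over $\by$ to extract $\epsilon_{\hat{\bx}}\norm{\bba^\top\bw^*}\leq(\bc^\top\hat{\bx}-\ubar{v})+\epsilon_{\hat{\bx}}\norm{\bc}$, which gives exactly the stated $R_\bw$ after applying $\norm{\bba^\top\bw^*}\geq\sigma_{\min}(\bba)\norm{\bw^*}$. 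Your route buys simplicity and rigor: it avoids invoking a subdifferential sum rule, nonemptiness of $\partial_\bx\tilde{g}_i(\bx^*,\bu_i^*)$, and the explicit complementary-slackness bookkeeping, all of which the paper uses without justification; the paper's KKT-based route would, in exchange, generalize to situations where one prefers to argue at the level of optimality conditions, but here it confers no advantage. Your preliminary reductions ($\bba\bx^*=\bb$, $f_i(\bx^*)\leq 0$, complementary slackness, $L(\bx^*,(\blambda^*,\bw^*))=\bc^\top\bx^*$) are all correctly obtained from the dual-side saddle inequality and are exactly what is needed to start both halves of the argument.
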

	\begin{proof}{Proof.}
		{\bf Boundedness of $\lambda^*_{i}$}. Let $\bx^*$ be an optimal solution to \eqref{eq.robust.problem}. Let $\hat{\bx}\in X$ be a Slater point. We have
		\begin{align*}
			\bc^\top\bx^* 
			& \leq \bc^\top\hat{\bx}+\sum_{i=1}^m \lambda^*_if_i(\hat{\bx})+(\bw^*)^\top(\bba\hat{\bx}-\bb)\\
			&=\bc^\top\hat{\bx}+\sum_{i=1}^m \lambda^*_if_i(\hat{\bx}) \nonumber \\
			& \leq \bc^\top\hat{\bx}+\norm{\blambda^*}_1\max_{i\in[m]} f_i(\hat{\bx})
		\end{align*}
		whence
		$$
		\norm{\blambda^*}_1\leq \frac{\bc^\top(\hat{\bx}-\bx^*)}{- \max_{i \in [m]} f_i(\hat{\bx})}=\bar{\lambda}.
		$$
		Thus, we have that $\lambda^*_{i}\leq \norm{\blambda^*}_1\leq \bar{\lambda}$ for all $i\in[m]$.
		
		{\bf Boundedness of $\bu_{i}^*$}. Since $\bu^*_{i} = (\tilde{\bz}^*_{i}, \lambda^*_{i}) \in U^{i}$ we have that $\tilde{\bz}^*_i\in \lambda^*_i Z^i$, by the Cauchy-Schwartz inequality we have that $\norm{\tilde{\bz}_i^*}\leq |\lambda^*_i|R_i\leq \bar{\lambda}R_i$. Thus, $\norm{\bu^*_i}^*= \norm{\tilde{\bz}_i^*}^2+|\lambda_i^*|^2\leq  \bar{\lambda}^2(R_i^2+1)$.
		
		{\bf Boundedness of $\bw^*$} Consider the saddle point formulation:
		\begin{align*}
			\inf\limits_{\bx \in X} \sup\limits_{\bu_i \in U^i, \bw} \bc^\top \bx  + \bw^\top (\bba \bx - \bb) + \sum\limits_{i = 1}^m \tilde{g}_i(\bx, \bu_i) = \inf\limits_{\bx} \sup\limits_{\bu_i \in U^i, \bw} \bc^\top \bx  + \bw^\top (\bba \bx - \bb) + \sum\limits_{i = 1}^m \tilde{g}_i(\bx, \bu_i) + \delta_X(\bx)
		\end{align*}
		Assume $\bx^*$, $\bw^*$, $\bu_i^\ast$ are the saddle point of this problem. The optimality conditions of the problem are:
		\begin{align*}
			\bba \bx^* & = \bb \\
			\bc + \bba^\top \bw^* + \sum\limits_{i = 1}^m \balpha_i + \bbeta & = \bzero
		\end{align*}
		where $\balpha_i \in  \partial_{\bx} \tilde{g}_i(\bx^*, \bu_i^*)$ and $\bbeta \in  \partial_{\bx} \delta_X(\bx^*)$.  Let $\hat{\bx} \in \text{int}(X)$ be the Slater point so we have 
		\begin{equation} \label{eq.slater}	
			\hat{\bx} + \bkappa \in X, \quad \tilde{g}_{i}(\hat{\bx} + \bkappa, \bu_i^*) \leq 0 \ \qquad \ \forall \bkappa: \| \bkappa \| \leq \epsilon_{\hat{\bx}}
		\end{equation}
		Since $\balpha_i \in \partial_{\bx} \tilde{g}_i(\bx^*, \bu_i^*)$, we have that 
		$$
		\tilde{g}_i(\bx^*, \bu_i^*) + \balpha_i^\top \left( \hat{\bx} + \bkappa - \bx^* \right) \leq \tilde{g}_i(\hat{\bx}+\bkappa, \bu_i^*)
		$$
		so that
		$$
		\underbrace{\sum\limits_{i \in [m]} \tilde{g}_i(\bx^*, \bu_i^*)}_{= 0} + \sum\limits_{i \in [m]} \balpha_i^\top \left( \hat{\bx} + \bkappa - \bx^* \right) \leq \underbrace{\sum\limits_{i \in [m]} \tilde{g}_i(\hat{\bx} + \bkappa, \bu_i^*)}_{\leq 0}
		$$
		where the first sum is equal to zero due to complementary slackness (since $\bx^*$ and $\lambda^*$ are optimal $\tilde{g}_i(\bx^\ast, \bu_i^\ast) = \lambda_i^\ast g_i(\bx^\ast, \bz_i^\ast) = \lambda_i^\ast f_i(\bx^*)= 0$) and the second sum is nonpositive due to \eqref{eq.slater}. Since $\bbeta \in  \partial_{\bx} \delta_X(\bx^*)$, we also have that 
		$$
		\delta_X(\bx^*) + \bbeta^\top ( \hat{\bx} + \bkappa - \bx^* ) \leq  \delta_X(\hat{\bx} + \bkappa)=0,
		$$ 
		which, combined with the optimality conditions, implies
		$$
		0 \leq \left( \bc + \bba^\top \bw^* + \sum\limits_{i = 1}^m \balpha_i \right)^\top \left( \hat{\bx} + \bkappa - \bx^* \right) \leq \left( \bc + \bba^\top \bw^* \right)^\top \left( \hat{\bx} + \bkappa - \bx^* \right), \quad \forall \| \bkappa \| \leq \epsilon_{\hat{\bx}}.
		$$
		This gives us the property
		$$
		\epsilon_{\hat{\bx}} \left\| \bc + \bba^\top \bw^* \right\| \leq \left( \bc + \bba^\top \bw^* \right)^\top \left( \hat{\bx} - \bx^* \right)
		$$
		Because $\bba \bx^* = \bba \hat{\bx} = \bb$, we obtain $\epsilon_{\hat{\bx}} \left\| \bc + \bba^\top \bw^* \right\| \leq \bc^\top \left( \hat{\bx} - \bx^* \right) $. Since by Assumption~\ref{ass:full.rank.A}, $\bba$ has full row rank, by the reverse triangle inequality  we can bound $\bw^*$ as follows:
		$$
		\epsilon_{\hat{\bx}} \left( \sigma_{\text{min}}(\bba^\top) \left\| \bw^* \right\| -  \left\| \bc \right\| \right) \leq \epsilon_{\hat{\bx}} \left( \left\| \bba^\top \bw^* \right\| -  \left\| \bc \right\| \right) \leq \epsilon_{\hat{\bx}} \left\| \bc + \bba^\top \bw^* \right\| \leq \bc^\top \left( \hat{\bx} - \bx^* \right).
		$$
		\qedsymbol
	\end{proof}
	
	We now move to presenting the two algorithms, summarized in Table~\ref{tab:comparison.algorithms}. We will first state both for problem \eqref{eq:saddle_point_lifted} and give their convergence without proofs. In Section~\ref{sec:generalizations}, we shall prove the convergence of a generalized problem in a unified framework from which the `simple cases' will follow as straightforward corollaries.
	
	The first algorithm, presented in Section~\ref{sec:general}, applies to the case with the additional assumptions that $X$ is bounded and the functions $g_i$ have bounded subgradients over $X\times Z^i$. For this setting, \shimrit{an iteration complexity of $O(1/{\epsilon}^2)$ is attained,} similar to the one obtained by \cite{ho2018online} under almost identical assumptions. In Section~\ref{subsubsec:CP.biaffine}, we consider the case where $g_i(\bx,\bz_i)$ are biaffine functions (or can be transformed to this form), and show that in this setting we can obtain a superior \shimrit{iteration complexity of $O(1/\epsilon)$.} 
	
	\begin{table}
		\TABLE
		{Comparison of the two saddle point algorithms. \label{tab:comparison.algorithms}}
		{\scriptsize 
			\begin{tabular}{|l|c|c|} \hline
				Algorithm & SGSP & \shimrit{CP} \\  \hline
				Domain $X$ & Bounded & No restriction \\ 
				Structure $g_i(\cdot)$ & Any & Biaffine or reducible to biaffine \\
				Optimality convergence rate & $\mathcal{O}(1/\sqrt{N})$ & $\mathcal{O}(1/N)$ \\
				Feasibility convergence rate & $\mathcal{O}(1/\sqrt{N})$ & $\mathcal{O}(1/N)$ \\
				Slater point needed to compute the stepsize & Yes & No \\  \hline
		\end{tabular}}
		{}
	\end{table}
	
	\subsection{Subgradient Saddle Point algorithm}\label{sec:general}
	In this section, we show how problem \eqref{eq:saddle_point_lifted} can be solved using the SGSP suggested in \cite{nedic2009subgradient}. This algorithm requires that the both primal and dual variables be contained in compact sets. Thus, we first need to replace each set $U^i$ by its compact counterpart $\tilde{U}^i=\{(\tilde{\bz}_i,\lambda_i)\in U^i: \lambda_i\leq \bar{\lambda}\}$. Indeed, by the property of $\lambda^*$ presented in Proposition~\ref{prop:bounded_lambda}, this restriction of $U^i$ would not change the set of saddle points of problem~\eqref{eq:saddle_point_lifted}. Similarly, we can restrict $\bw$ to reside in a set $W = \{ \bw \in \mathbb{R}^r: \ \norm{\bw} \leq R_\bw \}$. Using these new sets, the algorithm is as follows.
	\begin{center}
		\vspace{10pt}
		\fcolorbox{black}{gray!20}{
			\begin{minipage}{0.85\textwidth}
				\noindent \textbf{SGSP: SubGradient Algorithm for Saddle Point}\\
				\textbf{Input:} ${\tau>0}, {\theta_{i}>0}, \theta_{\bw} > 0$ for $i \in [m]$, and $N\in\mathbb{N}$\\
				\textbf{Initialization}. Initialize $\bx^0 \in X$ and $\bu^0_i \in \tilde{U}^i$, for $i \in [m]$, $\bw^0 \in W$. \\
				\textbf{General step:} For $k \in [N]$\\
				Compute subgradients $\bv^k_x\in \partial_\bx\bar{L}(\bx^k,(\bu^k, \bw^k))$, $\bv^k_{i}\in \partial_{\bu_i} \left(-\bar{L}(\bx^k,(\bu^k, \bw^k))\right)$, for all $i \in [m]$
				\begin{align*}
					\bx^{k+1}  &= P_X(\bx^{k} - \tau \bv_x^k), \\
					\bu^{k+1}_i &= P_{\tilde{U}^i}(\bu_i^{k} - \theta_i \bv_i^k) && i \in [m] \\
					\bw^{k+1} & = P_{W}(\bw^{k} + \theta_\bw (\bba \bx^{k} - \bb))
				\end{align*}
			\end{minipage}
		}
		\vspace{10pt}
	\end{center}
	Note that the algorithm can be applied whenever the projections over $\{\tilde{U}^i\}_{i\in[m]}$ and $X$ can be easily computed. We will see in Section~\ref{sec:generalizations}, that if any of the sets are intersections of multiple simpler sets, we can utilize splitting methods that enable the use of projections only on the components of the intersection. Moreover, note that at each iteration the steps for all $\bu_i$, $\bx$ and $\bw$ can be done in parallel.
	
	As in most algorithms solving saddle point problems, the SGSP algorithm's convergence is given in terms of the ergodic sequences, \ie denoting 
	$$
	\bar{\bx}^N=\frac{1}{N}\sum_{k=1}^N \bx^k, \quad \bar{\bu}^N=\frac{1}{N}\sum_{k=1}^N \bu^k, \quad \bar{\bw}^N=\frac{1}{N}\sum_{k=1}^N \bw^k
	$$
	the convergence result states the rate at which the sequence $\{\left(\bar{\bx}^N, \bar{\bu}^N, \bar{\bw}^N \right)\}_{N\in\mathbb{N}}$  converges to a saddle point. Here, we present the convergence in terms of the total constraint violation and the distance from the optimal value.
	\begin{theorem} \label{theorem.sgsp.simple.convergence}
		Let $\{\bx^k, \bu^k, \bw^k\}_{k\in\mathbb{N}}$ be the sequences generated by the SGSP algorithm with step sizes 
		\begin{align*}
			\tau\coloneqq\ \tilde{\tau} / \sqrt{N} ,\quad \theta_i\coloneqq \tilde{\theta}_i / \sqrt{N},\quad \theta_\bw \coloneqq \tilde{\theta}_\bw / \sqrt{N}
		\end{align*}
		for $i \in [m]$. Assume that $X$ is compact and define $R_\bx:=\max_{\bx\in X} \norm{\bx}$. Further assume there exist constants $G_\bx$, $G_i$ for $i \in [m]$ such that the subgradients $\{\bv_x^k\}_{k\in\mathbb{N}}$, $\{\bv_i^k\}_{k\in\mathbb{N}}$, $i \in [m]$ generated by the algorithm satisfy 
		$$  \norm{\bv^k_x}\leq G_\bx,\;\norm{\bv^k_i}\leq G_i,\;i \in [m],\quad \forall k\in \mathbb{N}.
		$$
		Then, we have the following feasibility and optimality convergence guarantees.
		\begin{align*}
			& \sum_{i=1}^m [f_i(\bar{\bx}^N)]_+ + \| \bba \bar{\bx}^N - \bb \|\leq  \nonumber \\
			&  \frac{\max\{2, \max_i \{ 1 + 4R_i^2\} \}}{2\sqrt{N}} \left( \frac{2 \max\{\norm{\bx^0}^2,R_\bx^2\}}{\tau} + \sum_{i=1}^m \frac{\max\{\bar{\lambda}+1, \lambda_i^0\}^2}{\theta_i} + \frac{\max \{ R_\bw + 1, \| \bw^0 \| \}^2}{\theta_\bw} + \phi \right), \nonumber
		\end{align*}
		and
		\begin{align*}
			& \left| \bc^\top(\bar{\bx}^N-\bx^*) \right| \leq  \nonumber \\
			&  \frac{\max\{2, \max_i \{ 1 + 4R_i^2\} \}}{2\sqrt{N}} \left(\frac{2 \max\{\norm{\bx^0}^2,R_\bx^2\}}{\tau} + \sum_{i=1}^m \frac{\max\{2\bar{\lambda}, \lambda_i^0\}^2}{\theta_i} + \frac{\max \{2R_\bw, \| \bw^0 \| \}^2}{\theta_\bw} + \phi \right), \nonumber
		\end{align*}
		where
		$$
		\phi := \tilde{\tau} G_\bx^2 + \sum\limits_{i=1}^m \tilde{\theta}_i G_i^2 + \tilde{\theta}_\bw G_\bw^2, \quad 
		G_\bw:=\norm{\bba}R_\bx +\norm{\bb}, \quad R_{i} := \max_{\bz \in Z^i} \| \bz \|.
		$$
	\end{theorem}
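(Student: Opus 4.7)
The plan is to follow the classical Nedi\'c--Ozdaglar subgradient saddle-point template, applied to the perspective Lagrangian $\bar{L}$, and then to specialize the resulting gap inequality with judicious choices of reference points. The per-iteration workhorse is a three-term inequality: combining nonexpansiveness of the projections $P_X$, $P_{\tilde{U}^i}$, and $P_W$ with the identity $\|a-\alpha v-b\|^2 = \|a-b\|^2 - 2\alpha v^\top(a-b) + \alpha^2\|v\|^2$ gives, for every reference point $(\bx,\bu,\bw)\in X\times \tilde{U}\times W$,
\begin{align*}
\|\bx^{k+1}-\bx\|^2 &\leq \|\bx^{k}-\bx\|^2 - 2\tau(\bv_x^k)^\top(\bx^k-\bx) + \tau^2 G_\bx^2, \\
\|\bu_i^{k+1}-\bu_i\|^2 &\leq \|\bu_i^{k}-\bu_i\|^2 - 2\theta_i (\bv_i^k)^\top(\bu_i^k-\bu_i) + \theta_i^2 G_i^2, \\
\|\bw^{k+1}-\bw\|^2 &\leq \|\bw^{k}-\bw\|^2 + 2\theta_\bw (\bba\bx^k-\bb)^\top(\bw^k-\bw) + \theta_\bw^2 G_\bw^2,
\end{align*}
the $\bw$-subgradient norm being bounded by $\|\bba\bx^k-\bb\|\leq \|\bba\|R_\bx+\|\bb\|=G_\bw$. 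Convexity of $\bar{L}(\cdot,(\bu,\bw))$ in $\bx$ and concavity of $\bar{L}(\bx,\cdot)$ in $(\bu,\bw)$ (concavity in each $\bu_i$ is automatic because $\tilde{g}_i$ is the perspective of the concave function $g_i(\bx,\cdot)$) let me replace each subgradient inner product by a gap in $\bar{L}$. Summing over $k=1,\ldots,N$, telescoping, dividing by $N$, and applying Jensen's inequality to the ergodic averages yields the master inequality
\[
\bar{L}(\bar{\bx}^N,(\bu,\bw)) - \bar{L}(\bx,(\bar{\bu}^N,\bar{\bw}^N)) \leq \frac{1}{2\sqrt{N}}\left(\frac{\|\bx^0-\bx\|^2}{\tilde{\tau}} + \sum_{i=1}^m\frac{\|\bu_i^0-\bu_i\|^2}{\tilde{\theta}_i} + \frac{\|\bw^0-\bw\|^2}{\tilde{\theta}_\bw} + \phi\right),
\]
valid for every admissible $(\bx,\bu,\bw)$ once the prescribed step sizes are inserted.

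For the feasibility bound I would take $\bx=\bx^*$, so $\bar{L}(\bx^*,(\bar{\bu}^N,\bar{\bw}^N))\leq \bc^\top\bx^*$ (using $\tilde{g}_i(\bx^*,\cdot)\leq 0$ on $\tilde{U}^i$ and $\bba\bx^*=\bb$), and choose the ``shifted'' reference $\bu_i=\bu_i^* + (\bz_i^\dagger,1)\cdot \mathbf{1}\{f_i(\bar{\bx}^N)>0\}$ with $\bz_i^\dagger\in\argmax_{\bz\in Z^i}g_i(\bar{\bx}^N,\bz)$, together with $\bw=\bw^* + (\bba\bar{\bx}^N-\bb)/\|\bba\bar{\bx}^N-\bb\|$. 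Because $\tilde{g}_i(\bx,\cdot)$ is concave and positively homogeneous of degree one in $\bu_i$, it is superadditive, so $\tilde{g}_i(\bar{\bx}^N,\bu_i)\geq \tilde{g}_i(\bar{\bx}^N,\bu_i^*) + [f_i(\bar{\bx}^N)]_+$; combined with the saddle-point inequality $\bar{L}(\bar{\bx}^N,(\bu^*,\bw^*))\geq \bc^\top\bx^*$, this yields $\bar{L}(\bar{\bx}^N,(\bu,\bw))\geq \bc^\top\bx^* + \sum_{i=1}^m[f_i(\bar{\bx}^N)]_+ + \|\bba\bar{\bx}^N-\bb\|$. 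Bounding $\|\bu_i\|\leq (\bar{\lambda}+1)\sqrt{1+R_i^2}$ and $\|\bw\|\leq R_\bw+1$, and using the coordinatewise perspective bounds $(\lambda_i-\lambda_i^0)^2\leq \max\{\lambda_i,\lambda_i^0\}^2$ and $\|\tilde{\bz}_i-\tilde{\bz}_i^0\|^2\leq 4R_i^2\max\{\lambda_i,\lambda_i^0\}^2$, give $\|\bu_i-\bu_i^0\|^2\leq (1+4R_i^2)\max\{\bar{\lambda}+1,\lambda_i^0\}^2$, whose substitution into the master inequality produces the announced feasibility bound.

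For the two-sided optimality bound I would instantiate the master inequality twice with $\bx=\bx^*$: first with $(\bu,\bw)=(\bzero,\bzero)$---for which $\bar{L}(\bar{\bx}^N,(\bzero,\bzero))=\bc^\top\bar{\bx}^N$, giving $\bc^\top\bar{\bx}^N-\bc^\top\bx^*\leq$ master bound; and second with the ``double-optimal'' reference $(\bu,\bw)=(2\bu^*,2\bw^*)$, for which positive homogeneity of $\tilde{g}_i(\bx,\cdot)$ yields $\bar{L}(\bar{\bx}^N,(2\bu^*,2\bw^*))=2\bar{L}(\bar{\bx}^N,(\bu^*,\bw^*))-\bc^\top\bar{\bx}^N\geq 2\bc^\top\bx^*-\bc^\top\bar{\bx}^N$, giving $\bc^\top\bx^*-\bc^\top\bar{\bx}^N\leq$ master bound. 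The reference sizes $\|\bu_i\|\leq 2\bar{\lambda}\sqrt{1+R_i^2}$ and $\|\bw\|\leq 2R_\bw$ in the second case produce the constants $\max\{2\bar{\lambda},\lambda_i^0\}$ and $\max\{2R_\bw,\|\bw^0\|\}$ that appear on the optimality side, while the first case is absorbed into the same expression since $\|\bu_i^0\|\leq \lambda_i^0\sqrt{1+R_i^2}$.

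The main technical obstacle is the bookkeeping around the perspective lift. Both specializations use reference points whose $\lambda$-coordinates may exceed the algorithm's cap $\bar{\lambda}$---by one unit for feasibility, by a factor of two for optimality---so the analysis is implicitly carried out after slightly enlarging $\tilde{U}^i$ and $W$; this is legitimate because Proposition~\ref{prop:bounded_lambda} places every relevant saddle point in the original, smaller sets, so neither the iterates nor the set of optimal references is affected. Carefully propagating the coordinatewise perspective bounds so that the final constants take the tight form $(1+4R_i^2)$ rather than a loose $(2+2R_i^2)$, and juggling the two different reference-point shifts so that both bounds share the leading factor $\max\{2,\max_i\{1+4R_i^2\}\}/2$, is the main calculation-heavy but routine piece of the argument.
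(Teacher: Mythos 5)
Your argument is essentially the paper's, compressed into a single pass. The paper first abstracts the master ergodic gap inequality into the notion of a $\psi$-EB algorithm (Theorem~\ref{thm.EB.algorithms.convergence}), verifies that SGSP is EB by citing Lemmas 3.1--3.2 of Nedi\'c--Ozdaglar and bounding the subgradient norms (Proposition~\ref{prop.SGSP.convergence}), and only then performs the reference-point specializations; you derive the same gap bound directly from the three-term projection recursion and specialize immediately. The specializations coincide for feasibility (your shift $\bu_i^*+(\bz_i^\dagger,1)$ on the violated constraints is exactly the paper's $\blambda^*+\br$ with $\kappa_1=1$, read in the lifted space, and the $\bw$-shift is identical) and for the upper optimality bound (reference $(\bzero,\bzero)$). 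For the lower optimality bound you exploit positive homogeneity of the perspective at $(2\bu^*,2\bw^*)$, whereas the paper bounds $\bc^\top(\bx^*-\bar{\bx}^N)$ by $\bar{\lambda}\sum_i[f_i(\bar{\bx}^N)]_+ + R_\bw\norm{\bba\bar{\bx}^N-\bb}$ and feeds the feasibility estimate back in with $\kappa_1=\bar{\lambda}$, $\kappa_2=R_\bw$; the two devices yield identical constants.

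The one step that is not self-supporting as written is your handling of reference points with $\lambda_i>\bar{\lambda}$ or $\norm{\bw}>R_\bw$. The master inequality rests on $\norm{P_{\tilde{U}^i}(a)-\bu_i}\leq\norm{a-\bu_i}$, which requires $\bu_i\in\tilde{U}^i$; ``enlarging $\tilde{U}^i$ and $W$ in the analysis'' does not repair this, because the iterates are produced by projecting onto the \emph{original} sets, and nonexpansiveness toward a point outside the projection set is false in general. The correct fix is to enlarge the sets in the algorithm itself (caps $2\bar{\lambda}$ and $2R_\bw$ cover every reference you use); by Proposition~\ref{prop:bounded_lambda} this changes no saddle point, and it is exactly what the constants $\max\{\bar{\lambda}+1,\lambda_i^0\}$, $\max\{2\bar{\lambda},\lambda_i^0\}$, $\max\{R_\bw+1,\norm{\bw^0}\}$, $\max\{2R_\bw,\norm{\bw^0}\}$ in the statement are accommodating. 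The paper's own treatment shares this wrinkle (its EB definition demands the gap bound for all $\blambda\in\real^m_+$ while the verification in Proposition~\ref{prop.SGSP.convergence} restricts to $\lambda_i\leq\bar{\lambda}$ and $\bw\in W$), so this is a one-sentence repair rather than a substantive gap, but the sentence you wrote is the wrong one.
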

	Note that almost all conditions used in Theorem~\ref{theorem.sgsp.simple.convergence} are also needed to apply the online first-order (OFO) approach of \cite{ho2018online}. In fact, the two approaches give similar convergence results with a few differences:
	\begin{enumerate}
		\item \emph{Assumptions.} SGSP requires the existence of a Slater point $\hat{\bx}$ while the OFO does not. Secondly, OFO requires boundedness of the subgradients of $g(\bx,\cdot)$ while SGSP requires boundedness of the subgradients of its prespective function. In Section~\ref{sec:subgradients} we show that under mild assumptions on the problems, these requirements are equivalent.
		\item \emph{Implementation.} Since the OFO approach is meant to solve a feasibility, rather than an optimality, problem, it requires to perform a binary search to approximate the optimal value of \eqref{eq.robust.problem}. Thus, the number of needed iteration to obtain feasibility and optimality guaranties is increased by a factor of $\log(1/\epsilon)$. In the SGSP in turn, we do not need to perform bi-section, however we must find a Slater point $\hat{\bx}$, the values of $f_i(\hat{\bx})$ for $i \in [m]$, and $\epsilon_{\hat{\bx}}$, as well as a lower bound on the objective function $\ubar{v}$, which are needed to compute both $\bar{\lambda}$ and $R_{\bw}$. While in some cases it may be easy to find these quantities, in general it requires solving an auxiliary optimization problem, as we discuss in Section~\ref{sec:search.slater.point}.  
		\item \emph{Projections.} While OFO requires projections on sets $X$ and $Z^i$, SGSP requires projection onto $X$ and the lifted set $\tilde{U}^i$. In Section~\ref{sec:projection} we show that for standard simple sets $Z^i$ the projections onto $\tilde{U}^i$ can be simply computed. 
		\item \emph{Constants.} Although the convergence rate of both methods is $O(1/\epsilon^2)$, the constants obtained by the SGSP algorithm are worse then those of OFO, if the same first order method (subgradient/mirror decent) is used. 
	\end{enumerate}
	
	\subsection{\shimrit{Chambolle-Pock algorithm}}
	\label{subsubsec:CP.biaffine}
	In this section, we present an algorithm with a superior rate of convergence of $O(1/\epsilon)$ which does not require boundedness of $X$. This algorithm requires the additional assumption that the functions $g_i(\bx,\bz_i)$ have a biaffine form:
	\begin{equation} \label{eq:biaffine.g_i}
		g_i(\bx,\bz_i)=\bx^\top\bbq_i\bz_i+\bd_i^\top\bx+\bq_i^\top\bz_i+\gamma_i.
	\end{equation}
	and that the primal variable is not constrained. In Remark~\ref{rem:non.biaffine}, we show how more general problems of the form $g_i(\bx,\bz_i)\coloneqq\bh_i(\bx)^\top\bk_i(\bz_i)$ with convex and concave $\bh_i$ and $\bk_i$, respectively, can be reformulated to fit this case.
	
	To state the algorithm, we first simplify its form due to the biaffine structure. Indeed, under \eqref{eq:biaffine.g_i} function $\bar{L}$ reduces to
	\begin{align}
		\bar{L}(\bx,(\bu, \bw)) & \coloneqq \bc^\top\bx+\sum_{i=1}^m \left(\bx^\top\tilde{\bbq}_i\bu_i+\tilde{\bq}_i^\top\bu_i\right) + \bw^\top (\bba \bx - \bb) \nonumber \\
		& = \bc^\top\bx + \bx^\top \bba^\top \bw + \bx^\top\tilde{\bbq}\bu+\tilde{\bq}^\top\bu - \bb^\top \bw, \label{eq:basic.ChP.problem}
	\end{align}
	where, 
	\begin{equation}\label{eq:lin_new_param}
		\tilde{\bbq}_i=\begin{bmatrix}
			\bbq_i & \bd_i
		\end{bmatrix},\; \tilde{\bq}_i=\begin{bmatrix}
			\bq_i \\ \gamma_i
		\end{bmatrix},\; \tilde{\bbq}=(\tilde{\bbq}_1,\ldots,\tilde{\bbq}_m),\; \tilde{\bq}=(\tilde{\bq}_1^\top,\ldots,\tilde{\bq}_m^\top)^\top,\;\bbb=(\bba^\top,\tilde{\bbq}).\end{equation}
	
	\shimrit{We now state the CP algorithm of \cite{chambolle2011first} for solving problem~\eqref{eq:basic.ChP.problem}.}
	\begin{center}
		\vspace{10pt}
		\fcolorbox{black}{gray!20}{
			\begin{minipage}{0.7\textwidth}
				\shimrit{	\noindent \textbf{CP: Chambolle-Pock first-order primal-dual algorithm}\\
					\textbf{Input:} ${\tau>0}, \sigma >0, \tau\cdot\sigma\leq \frac{1}{\norm{\bbb}^2}$\\
					\textbf{Initialization}. Initialize $\bar{\bx}^0 \in \real^n$, $\by^0=(\bu^0, \bw^0) \in U \times \real^m$. \\
					\textbf{General step:} For $k \in [N]$
					\begin{align*}
						\bu^{k}_{i} & = P_{U_{i}} \left(  \bu^{k - 1}_{i} + \sigma (\tilde{\bq}_i +\tilde{\bbq}_i^T\bar{\bx}^{k-1})\right) && i \in [m] \\
						\bw^k & =  \bw^{k - 1} + \sigma (\bba \bar{\bx}^{k-1} - \bb)  \\
						\bx^k & = P_X \left( \bx^{k - 1} - \tau \left(\bc + \bba^\top \bw^{k} - \tilde{\bbq}_i \bu_{i}^k\right)  \right) \\
						\bar{\bx}^k&=2\bx^k-\bx^{k-1}
				\end{align*}}
		\end{minipage}}
		\vspace{10pt}
	\end{center}
	Thus, similarly to the SGSP algorithm, CP can be applied whenever the projections over $U^i$ and $X$ are easily computed. Similarly to the SGSP, convergence results for the CP are in terms of the ergodic sequence.
	\shimrit{\begin{theorem} \label{theorem.CP.simple.convergence}
			Let Assumptions~\ref{ass:Optimal} and \ref{ass:Slater} hold and let $\{\bx^k, \bw^k \}_{k \in \mathbb{N}}$ be the sequence generated by the CP algorithm with some $\tau>0, \sigma > 0$ satisfying $\tau\sigma\norm{\bbb}^2\leq 1$. Then, we have the following convergence guarantees:
			\begin{align*}
				& \sum_{i=1}^N [f_i(\bar{\bx}^N)]_+ + \norm{\bba \bar{\bx}^N - \bb} \leq \\
				&\frac{\max \{ 2, \max_i (1 + 4 R_i^2) \}}{2N} \left({\small \frac{{\max\{\norm{\bx^*},\norm{\bx^0}\}^2}}{\tau} + \sum_{i=1}^m\frac{\max\{\bar{\lambda}+1, \lambda_i^0\}^2}{\sigma} + \frac{\max \{ R_\bw + 1, \| \bw^0 \| \}^2}{\sigma} } \right),
			\end{align*}
			and
			\begin{align*}
				& \left| \bc^\top(\bar{\bx}^N-\bx^*) \right| \leq  \nonumber \\
				& \frac{\max \{ 2, \max_i (1 + 4 R_i^2) \}}{2N} \left({\small  \frac{{\max\{\norm{\bx^*},\norm{\bx^0}\}^2}}{\tau} + \sum_{i=1}^m \frac{\max\{2\bar{\lambda}, \lambda_i^0\}^2}{\sigma} + \frac{\max \{ 2 R_\bw, \| \bw^0 \| \}^2}{\sigma} }\right).
			\end{align*}
	\end{theorem}}
	
	\subsection{Numerical implementation of saddle point algorithms}
	In this section, we discuss some technical aspects related to the implementation of the above algorithms. These aspects relate to (i) finding a Slater point for the SGSP algorithm using SGSP algorithm of an auxiliary problem, (ii) computing subgradients of the lifted functions $\tilde{g}_i$ from the subgradients of the original functions $g_i$, (iii) computing projections onto the lifted sets $U^i$.
	\subsubsection{Search for a Slater point in the SGSP algorithm} \label{sec:search.slater.point}
	To run the SGSP algorithm, we require parameter values dependent on the features of a Slater point of the problem. Therefore, before SGSP is run, we first need to identify a Slater point and plug the appropriate values into the algorithm. To find such a point, we need to solve the following optimization problem 
	\begin{align}
		\min\limits_{\bx, t} \ & t \label{eq.problem.slater.search} \\
		\text{s.t.} \ & \max\limits_{\bz_i \in Z^i} g_i(\bx,\bz_i) \leq t && \forall i \in [m] \nonumber \\
		& \bba \bx = \bb \nonumber \\
		& \bx \in X \nonumber
	\end{align}
	This problem satisfies the Slater condition for any $\bx^0 \in \text{int}(X)$, $\bba \bx^0 = \bb$, and $t=t^0+\delta$ such that 
	$$
	{t}^0 = \max_{i \in [m]} \max\limits_{\bz_i \in Z^i} g_i({\bx}^0,\bz_i).
	$$
	For that reason, we can first apply the SGSP algorithm to this problem by transforming it to a saddle point form. To keep the domain of $(\bx, t)$ compact, we can restrict $t$  to belong to the interval $[-1,\bar{t}]$ where $\bar{t}={t}^0+\delta$.
	
	If we assume that the subgradients in the SGSP algorithm for the original problem~\eqref{eq.robust.problem} are bounded, then it also holds for \eqref{eq.problem.slater.search}. Thus, one can run the SGSP algorithm for \eqref{eq.problem.slater.search}, knowing that it will converge to the optimal value of $t$.
	A complication is the moment at which the algorithm stops. Assuming that the original problem satisfies the Slater condition with constant $\epsilon$ we would like to stop when both optimality and feasibility conditions are satisfied with $\frac{\epsilon}{3}$ thus ensuring that the $(\hat{\bx},\hat{t})$ obtained by the procedure satisfies
	$$\max\limits_{\bz_i \in Z^i} g_i(\hat{\bx},\bz_i)\leq \hat{t}+\frac{\epsilon}{3}\leq t^*+\frac{\epsilon}{3}+\frac{\epsilon}{3}=-\frac{\epsilon}{3}.$$
	However, in practice, $\epsilon$ can be unknown in advance, and therefore, we construct a search procedure as follows:
	\begin{center}
		\vspace{10pt}
		\fcolorbox{black}{gray!20}{
			\begin{minipage}{0.7\textwidth}
				\noindent \textbf{Slater Point Search}\\
				\textbf{Input:} ${\tau>0}, {\theta_i>0}$ for $i \in [m]$, $\delta>0$, $K=2$\\
				\textbf{Initialization}. Initialize $\bx^0 \in \text{int}(X)$, $t^0=\max_{i \in [m]} f_i({\bx}^0)+\delta$ and $\bu^0_i \in U^i$, for $i \in [m]$. \\
				\textbf{General step:} For $k \in 1,2,\ldots$
				\begin{enumerate}
					\item Run SGSP for the Lagrangian form of problem~\eqref{eq.problem.slater.search} starting from point $(\bx^{k-1},t^{k-1})$ and $\bu^{k-1}$ for $K$ iterations, and obtain the ergodic values $(\bx^k,t^k)$ and $\bu^k$.
					\item Update $t^k=\max_{i \in [m]} f_i({\bx}^k)$.
					\item If $t^k<0$ stop and return the Slater point $\hat{\bx}=\bx^k$ and the Slater value $\epsilon=-t^k$. Otherwise, update $\bar{t}=\min\{\bar{t},t^k+\delta\}$, $k=k+1$ and $K=2K$.
				\end{enumerate}
			\end{minipage}
		}
		\vspace{10pt}	
	\end{center} 
	As stated above, this procedure is guaranteed to converge. Moreover, since $\bx^0 \in \text{int}(X)$ then by properties of convex sets we will also obtain that for each $k$ the iterate ${\bx}^k\in \text{int}(X)$, as the average of points in $X$ with one of them in the interior.
	
	\subsubsection{Determining the subgradients of $\tilde{g}_i$ from subgradients of $g_i$}\label{sec:subgradients}
	To prove the convergence of SGSP we require that the subgradients of the perspective function $\tilde{g}_i$ are bounded. In this section, we show that the subgradients of $\tilde{g}_i$ can be easily derived from subgradients of $g_i$. Moreover, we will also show that under the following mild assumptions the boundedness of the subgradients of $\tilde{g}_i$ follows from the boundedness of the subgradients of $g_i$.
	\begin{assumption}\label{ass:zero_in_int_Zi}
		For every $Z^i$ there exists a constant $\epsilon_i>0$ such that $B(\bzero,\epsilon_i)\subseteq Z^i$.
	\end{assumption}
	\begin{assumption}\label{ass:constraints_bounded_below}
		There exists a constant $\bar{\mu}_i>0$ such that  $-g_i(\bx,\bzero)\leq \bar{\mu}_i$ for any feasible $\bx$ of problem~\eqref{eq.robust.problem}.
	\end{assumption}
	Assumption~\ref{ass:zero_in_int_Zi} is a standard RO assumption that the uncertainty set is full dimensional, 
	note that this is always true, since under a linear transformation we can always reduce the dimension of $Z^i$. Assumption~\ref{ass:constraints_bounded_below} states the following: the feasible set of the robust problem does not contain rays that make any of the constraints of the `nominal problem' (where $\bz = 0$) be arbitrarily satisfied, \ie make $g_i(\bx,\bzero)$ arbitrarily small. We note that this assumption can be verified by checking the following sufficient condition: 
	$$
	\min\left\{g_i(\bx,\bzero):\;\bx\in X,\; g_j(\bx,\bzero)\leq 0, \forall j\in[m]\setminus\{i\}\right\}>-\infty,\;\forall i\in [m],
	$$
	which can, in turn, be shown to hold by solving $m$ convex (non-robust) optimization problems.
	
	In the course of the SGSP algorithm, one needs to compute the subgradients of the perspective functions $\tilde{g}_i(\bx, \bu)$. Ideally, this is done using the subgradients  of the original functions $g_i(\bx, \bz)$, which should typically be available. The following lemma provides a `recipe' for doing exactly this. The `recipe' is based on the convex analysis results for perspective functions of \cite{combettes2018perspective}.
	\begin{lemma}\label{lem:subdif_def}
		Let $\bx\in X$ and $\bu_i=(\tilde{\bz}_i,\lambda_i)\in U^i$ for all $i \in [m]$. 
		\begin{itemize}
			\item[(i)] If $\bv_x\in \partial_\bx\bar{L}(\bx,\bu)$, $\bv_{i}\in \partial_{\bu_i} \left(-\bar{L}(\bx,\bu)\right)$, then they are of the following form
			\begin{align}
				&\bv_{\bx}=\bc+\bd_x\label{eq:d_x}\\
				&\bv_{i}=\begin{cases} \left(\bd_i,-g_{i}\left(\bx,\frac{\tilde{\bz}_i}{\lambda_i}\right)-\frac{\tilde{\bz}_i^\top\bd_i}{\lambda_i}\right),  & \lambda_i>0,\\
					(\bd_i,\phi_i), & \text{otherwise}
				\end{cases}\label{eq:d_z}
			\end{align}
			where $\bd_x\in \sum_{i:\lambda_i>0} \lambda_i\partial_\bx g_{i}\left(\bx,\frac{\tilde{\bz}_i}{\lambda_i}\right)$, $\bd_i\in \partial_{\bz_i}\left(-g_{i}\left(\bx,\frac{\tilde{\bz}_i}{\lambda_i}\right)\right)$ for all $i \in [m]$ such that $\lambda_i>0$, and  $\bd_i\in\cup_{\bz_i\in \real^{d_i}} \left( \partial_{\bz_i} (-g_{i})\left(\bx,\bz_i\right)\right) $, $\phi_i+ (-g_i)^*(\bx,\bd_i)\leq 0$ for all $i \in [m]$ such that $\lambda_i=0$.
			\item[(ii)] Let $\bz_i=\tilde{\bz}_i/\lambda_i$ if $\lambda_i>0$ and $\bz_i=\bzero$ otherwise, and let 
			$\tilde{\bd}_{\bx,i}\in \partial_{\bx} g_i(\bx,\bz_i)$ and $\tilde{\bd}_{\bz,i} \in \partial_{\bz_i } \left(-g_i(\bx,\bz_i)\right)$ for all $i \in [m]$. Then, 
			\begin{align*}\bv_x=\bc+\sum_{i=1}^m\lambda_i\bd_{\bx,i} \in \partial_\bx\bar{L}(\bx,\bu) \text{ and } \bv_i=(\bd_{\bz,i},-g_i(\bx,\bz_i)-(\bz_i)^\top\bd_{\bz,i})\in\partial_{\bu_i} \left(-\bar{L}(\bx,\bu)\right).\end{align*}
		\end{itemize}
	\end{lemma}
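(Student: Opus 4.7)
The plan is to recognize that, for each fixed $\bx\in X$, the mapping $\bu_i\mapsto -\tilde g_i(\bx,\bu_i)=\lambda_i h_i(\tilde\bz_i/\lambda_i)$ with $h_i(\cdot):=-g_i(\bx,\cdot)$ is precisely the perspective of the convex function $h_i$. Parts (i) and (ii) will then follow from the subdifferential calculus for perspective functions collected in \cite{combettes2018perspective}, handled in two regimes, $\lambda_i>0$ (chain-rule regime) and $\lambda_i=0$ (Fenchel-conjugate regime). The only genuine difficulty is the boundary regime $\lambda_i=0$; the remainder is bookkeeping.

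For (i), decompose $\partial_\bx\bar L(\bx,\bu)=\bc+\sum_i\partial_\bx\tilde g_i(\bx,\bu_i)$, where the $\bw^\top(\bba\bx-\bb)$ term contributes a constant vector absorbed into $\bd_x$. For indices with $\lambda_i>0$, $\tilde g_i(\bx,\bu_i)=\lambda_i g_i(\bx,\tilde\bz_i/\lambda_i)$ is a positive scaling of $g_i(\bx,\cdot)$ in $\bx$, so $\partial_\bx\tilde g_i(\bx,\bu_i)=\lambda_i\,\partial_\bx g_i(\bx,\tilde\bz_i/\lambda_i)$. For $\lambda_i=0$, membership $\tilde\bz_i\in\lambda_i Z^i$ forces $\tilde\bz_i=\bzero$, and $\tilde g_i(\bx,(\bzero,0))\equiv 0$ as a function of $\bx$ (by the convention in the definition of $\tilde g_i$), so its $\bx$-subdifferential is $\{\bzero\}$; hence such indices contribute nothing to the sum, yielding \eqref{eq:d_x}. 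For the $\bu_i$-subdifferential, invoke the standard perspective-subgradient identity: when $\lambda_i>0$,
\[
\partial\tilde h_i(\tilde\bz_i,\lambda_i)=\bigl\{\bigl(\bd,\,h_i(\tilde\bz_i/\lambda_i)-(\tilde\bz_i/\lambda_i)^\top\bd\bigr):\bd\in\partial h_i(\tilde\bz_i/\lambda_i)\bigr\},
\]
which after substituting $h_i=-g_i(\bx,\cdot)$ reproduces the first branch of \eqref{eq:d_z}. When $\lambda_i=0$ (so $\tilde\bz_i=\bzero$) compute $\tilde h_i(\bzero,0)=0$ and, by a direct Lagrange-multiplier calculation $\tilde h_i^*(\bd,\phi)=\sup_{\bz,\lambda>0}\lambda\bigl(\bd^\top\bz+\phi/\lambda\cdot\lambda-h_i(\bz)\bigr)$, obtain $\tilde h_i^*(\bd,\phi)=0$ if $h_i^*(\bd)+\phi\le 0$ and $+\infty$ otherwise. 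The Fenchel–Young equality $\tilde h_i(\bzero,0)+\tilde h_i^*(\bd,\phi)=0$ then characterizes $(\bd,\phi)\in\partial\tilde h_i(\bzero,0)$ by $h_i^*(\bd)+\phi\le 0$ with $\bd\in\dom h_i^*=\bigcup_{\bz}\partial h_i(\bz)$, which is exactly the second branch of \eqref{eq:d_z}.

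For (ii), this is the constructive converse of (i). In the case $\lambda_i>0$, set $\bd_i=\tilde\bd_{z,i}$ and $\bd_{x,i}=\tilde\bd_{x,i}$ and read off both formulas directly from part (i). In the case $\lambda_i=0$ (so $\bz_i:=\bzero$ by the convention), set $\bd_i=\tilde\bd_{z,i}\in\partial_{\bz_i}(-g_i)(\bx,\bzero)$ and $\phi_i=-g_i(\bx,\bzero)-\bzero^\top\tilde\bd_{z,i}=-g_i(\bx,\bzero)$; the Young equality for $h_i$ at the point $\bzero$ with subgradient $\tilde\bd_{z,i}$ gives $h_i(\bzero)+h_i^*(\tilde\bd_{z,i})=\bzero^\top\tilde\bd_{z,i}=0$, hence $h_i^*(\tilde\bd_{z,i})=g_i(\bx,\bzero)$ and therefore $\phi_i+h_i^*(\tilde\bd_{z,i})=0\le 0$, so $(\bd_i,\phi_i)$ indeed belongs to the set described in (i). Summing the $\lambda_i\tilde\bd_{x,i}$ over the active indices and padding with zero for $\lambda_i=0$ produces $\bv_x$.

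The principal obstacle is the boundary regime $\lambda_i=0$: the chain rule fails there, $\tilde g_i$ is non-smooth along the face $\{\lambda=0\}$, and the subdifferential must be extracted from the conjugate of the perspective via Fenchel–Young. Once that conjugate is identified, the rest of the argument is a routine case split and verification.
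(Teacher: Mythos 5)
Your proposal is correct and follows essentially the same route as the paper: both parts rest on the subdifferential calculus of perspective functions, with the $\lambda_i>0$ case handled by the chain-rule identity and the $\lambda_i=0$ case by the conjugate/Fenchel--Young characterization (the paper simply cites \cite[Lemma 2.3]{combettes2018perspective} for part (i), whereas you re-derive that formula from $\tilde h_i^*$, and in part (ii) you verify $\phi_i+(-g_i)^*(\bx,\bd_i)\le 0$ via Young's equality at $\bzero$ where the paper uses the subgradient inequality --- the same computation in different clothing). The only nitpick is your identification $\dom h_i^*=\bigcup_{\bz}\partial h_i(\bz)$, which holds only up to relative-interior/closure issues, but this does not affect the containment actually needed for the statement.
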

	
	\subsubsection{Computing the projections} \label{sec:projection}
	In this section, we discuss the projections needed to apply SGSP and CP. Specifically, we will discuss how to project on set $U^i$ and $\tilde{U}^i$, which are more complicated than $Z^i$. We shall assume that the projection on set $Z^i$ is simple and formulate the projection on $U^i$ in its terms. Note that, in this paper, we use $U^i$
	\begin{equation} 
		U^i=\{(\tilde{\bz}_i,\lambda_i):\tilde{\bz}_i\leq \lambda_iZ^i,\lambda_i\geq 0\}.\label{eq:Ui_firstform}\end{equation}
	for the CP setting,
	and $\tilde{U}^i$
	\begin{equation} \tilde{U}^i=\{(\tilde{\bz}_i,\lambda_i):\tilde{\bz}_i\leq \lambda_iZ^i,\;0\leq \lambda_i\leq \bar{\lambda}\},\label{eq:Ui_secondform}\end{equation}
	which uses an additional upper bound on $\lambda_i$ for the more general SGSP setting. We will start by showing a general way to compute the projection over $U^i$.
	
	\begin{proposition}\label{prop:proj_firstformUi}
		The projection of $\bu_i=(\tilde{\bz}_i,\lambda_i)\in \real^{d_i+1}$ on the set $U^i$ defined by \eqref{eq:Ui_firstform} is given by
		$$P_{U^i}(\bu_i)=\begin{cases}
			\bu_i &\bu_i\in U^i\\
			\left(\mu^*P_{Z^i}\left(\frac{\tilde{\bz}_i}{\mu^*}\right),\mu^*\right)&\bu_i\notin U^i,\sigma_{Z^i}(\tilde{\bz}_i)>-\lambda_i\\
			\bzero &\text{otherwise}
		\end{cases}$$
		where $\mu^*>0$ is the unique solution of 
		$$\mu\norm{P_{Z^i}\left(\frac{\tilde{\bz}_i}{\mu}\right)}^2-\tilde{\bz}_i^\top P_{Z^i}\left(\frac{\tilde{\bz}_i}{\mu}\right)+\mu-\lambda_i=0.$$
	\end{proposition}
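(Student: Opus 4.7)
The projection problem is
\[
\min_{(\tilde{\bv}, \mu) \in U^i} \tfrac{1}{2}\|\tilde{\bv} - \tilde{\bz}_i\|^2 + \tfrac{1}{2}(\mu - \lambda_i)^2,
\]
whose minimizer $(\tilde{\bv}^*, \mu^*)$ is unique because $U^i$ is closed and convex (it is the conic hull of $Z^i \times \{1\}$ together with the origin) and the objective is strongly convex. The case $\bu_i \in U^i$ is immediate. My plan for the remaining cases is to split according to whether $\mu^* > 0$ or $\mu^* = 0$. When $\mu^* > 0$, the inner problem in $\tilde{\bv}$ reduces to a projection onto $\mu^* Z^i$, giving $\tilde{\bv}^* = \mu^* P_{Z^i}(\tilde{\bz}_i/\mu^*)$, so it remains to determine $\mu^*$. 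I would introduce the reduced value function
\[
\phi(\mu) := \tfrac{1}{2}\min_{\bw \in Z^i}\|\mu\bw - \tilde{\bz}_i\|^2 + \tfrac{1}{2}(\mu - \lambda_i)^2, \qquad \mu > 0,
\]
and apply Danskin's theorem (the inner minimizer $\bw^*(\mu) = P_{Z^i}(\tilde{\bz}_i/\mu)$ is unique since $\|\mu\bw - \tilde{\bz}_i\|^2$ is strictly convex in $\bw$ for $\mu > 0$) to obtain
\[
\phi'(\mu) = \mu \|P_{Z^i}(\tilde{\bz}_i/\mu)\|^2 - \tilde{\bz}_i^\top P_{Z^i}(\tilde{\bz}_i/\mu) + \mu - \lambda_i,
\]
which is precisely the left-hand side of the equation in the statement; hence $\phi'(\mu^*) = 0$.

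For uniqueness of $\mu^*$ I would first prove that $F(\mu) := \tfrac{1}{2}\min_{\bw \in Z^i}\|\mu\bw - \tilde{\bz}_i\|^2$ is convex on $(0,\infty)$ by a direct argument: given $\mu_1, \mu_2 > 0$ with optimal $\bw_1, \bw_2 \in Z^i$ and $\alpha \in [0,1]$, setting $\bar\mu = \alpha\mu_1 + (1-\alpha)\mu_2$ and $\bar\bw = (\alpha\mu_1\bw_1 + (1-\alpha)\mu_2\bw_2)/\bar\mu \in Z^i$ yields $\bar\mu\bar\bw - \tilde{\bz}_i = \alpha(\mu_1\bw_1 - \tilde{\bz}_i) + (1-\alpha)(\mu_2\bw_2 - \tilde{\bz}_i)$, and convexity of $\|\cdot\|^2$ gives $F(\bar\mu) \leq \alpha F(\mu_1) + (1-\alpha)F(\mu_2)$. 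Adding the strictly convex term $\tfrac{1}{2}(\mu - \lambda_i)^2$ makes $\phi$ strictly convex, so $\phi'$ is strictly increasing and has at most one zero. For existence, I would compute the one-sided limit $\lim_{\mu \to 0^+}\phi'(\mu) = -\sigma_{Z^i}(\tilde{\bz}_i) - \lambda_i$ using that $P_{Z^i}(\tilde{\bz}_i/\mu)$ converges as $\mu \to 0^+$ to a maximizer of $\bw \mapsto \tilde{\bz}_i^\top \bw$ on the compact set $Z^i$; combined with $\phi'(\mu) \to \infty$ as $\mu \to \infty$, the condition $\sigma_{Z^i}(\tilde{\bz}_i) > -\lambda_i$ forces a unique positive zero of $\phi'$, establishing the middle case.

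When $\mu^* = 0$, we have $\tilde{\bv}^* = \bzero$. Optimality at $(\bzero, 0)$ amounts to the one-sided directional derivative along each feasible ray $(\epsilon, \epsilon\bw)$, with $\bw \in Z^i$ and $\epsilon > 0$, being nonnegative: a direct computation gives $-\bw^\top \tilde{\bz}_i - \lambda_i \geq 0$ for every $\bw \in Z^i$, which is equivalent to $\sigma_{Z^i}(\tilde{\bz}_i) \leq -\lambda_i$. This is exactly the complement of the middle case, so the three branches are mutually exclusive and exhaustive, and the "otherwise" branch is pinned down. The main technical hurdle in the plan is establishing the convexity of $F$, since it is this step that yields the strict convexity of $\phi$ needed for the uniqueness of $\mu^*$; once that is in place, everything else reduces to Danskin's theorem plus a straightforward boundary analysis.
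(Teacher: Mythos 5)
Your proposal is correct and follows essentially the same route as the paper's proof: reduce to a one-dimensional problem in $\mu$ by partial minimization over $Z^i$, establish strict convexity of the reduced function, compute its derivative (which is exactly the stated equation), and characterize existence of a positive root via the limit $-\sigma_{Z^i}(\tilde{\bz}_i)-\lambda_i$ of that derivative as $\mu \to 0^+$. The only cosmetic differences are that you prove convexity of the partial minimum by a direct convex-combination computation rather than citing preservation of convexity under partial minimization, and you settle the $\mu^*=0$ branch by an explicit directional-derivative argument.
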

	The same technique used to prove Proposition~\ref{prop:proj_firstformUi} can be used for the proof of projection over $\tilde{U}^i$.
	\begin{corollary}\label{cor:proj_secondformUi}
		The projection of $\bu_i=(\tilde{\bz}_i,\lambda_i)\in \real^{d_i+1}$ on the set $\tilde{U}^i$ defined by \eqref{eq:Ui_secondform} is given by
		$$P_{\tilde{U}^i}(\bu_i)=\left(\mu^* P_{Z^i}\left(\frac{\tilde{\bz}_i}{\mu^*}\right),\mu^*\right)$$
		where $\mu^*=\max\{\min\{\tilde{\mu},\bar{\lambda}\},0\}$ with $\tilde{\mu}$ being the unique solution of
		$$\tilde{\mu}\norm{P_{Z^i}\left(\frac{\tilde{\bz}_i}{\tilde{\mu}}\right)}^2-\tilde{\bz}_i^\top P_{Z^i}\left(\frac{\tilde{\bz}_i}{\tilde{\mu}}\right)+\tilde{\mu}-\lambda_i=0,$$
		and $0 P_{Z^i}\left(\frac{\tilde{\bz}_i}{0}\right)=0$.
	\end{corollary}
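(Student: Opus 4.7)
The plan is to reduce the two--variable projection on $\tilde U^i$ to a one--dimensional optimization over $\lambda_i$, mirroring the argument that is presumably used for Proposition~\ref{prop:proj_firstformUi}, and then handle the extra upper bound $\lambda_i \le \bar\lambda$ by the usual clipping to a box. I will assume Proposition~\ref{prop:proj_firstformUi} and its derivation as a template, since the statement of the corollary explicitly invokes ``the same technique.''

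First I would observe that for every fixed $\mu \in [0,\bar\lambda]$ the slice $\{\tilde{\bz} : (\tilde{\bz},\mu)\in \tilde U^i\}$ is exactly $\mu Z^i$. For $\mu>0$, minimizing $\|\tilde{\bz}-\tilde{\bz}_i\|^2$ over $\tilde{\bz}\in \mu Z^i$ gives the minimizer $\tilde{\bz}^{\star}(\mu)=\mu P_{Z^i}(\tilde{\bz}_i/\mu)$ with minimal value
$$
v(\mu) \;=\; \bigl\|\mu P_{Z^i}(\tilde{\bz}_i/\mu) - \tilde{\bz}_i\bigr\|^2.
$$
For $\mu=0$ the only admissible $\tilde{\bz}$ is $\bzero$, and by the convention $0\cdot P_{Z^i}(\tilde{\bz}_i/0)=\bzero$ the formula extends continuously. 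Thus the projection problem reduces to the scalar problem
$$
\min_{\mu\in[0,\bar\lambda]} \ h(\mu) \;:=\; v(\mu) + (\mu-\lambda_i)^2.
$$
Convexity of $h$ on $[0,\bar\lambda]$ follows from the convexity of $\tilde U^i$: $h(\mu)$ is the squared distance from $(\tilde{\bz}_i,\lambda_i)$ to the convex slice $\mu Z^i\times\{\mu\}$ of $\tilde U^i$, and standard properties of the squared distance on a convex set yield convexity in $\mu$.

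Next, I would differentiate $h$. By Danskin's theorem applied to $v(\mu)=\min_{\bz\in Z^i}\|\mu\bz-\tilde{\bz}_i\|^2$, letting $\bp(\mu):=P_{Z^i}(\tilde{\bz}_i/\mu)$, one obtains for $\mu>0$
$$
\tfrac{1}{2}h'(\mu) \;=\; \bp(\mu)^\top\bigl(\mu\bp(\mu)-\tilde{\bz}_i\bigr) + (\mu-\lambda_i) \;=\; \mu\bigl\|\bp(\mu)\bigr\|^2 - \tilde{\bz}_i^\top \bp(\mu) + \mu - \lambda_i.
$$
Setting $h'(\tilde\mu)=0$ reproduces exactly the equation in the statement of the corollary, and strict convexity of $h$ (together with the continuity of $\mu\mapsto \mu\bp(\mu)$) shows that this stationary equation has a unique solution $\tilde\mu\in\mathbb{R}$ in the regime where an interior stationary point is well defined.

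Finally, because $h$ is convex on the box $[0,\bar\lambda]$, its constrained minimizer is the projection of the unconstrained minimizer onto $[0,\bar\lambda]$, giving $\mu^*=\max\{\min\{\tilde\mu,\bar\lambda\},0\}$. Plugging $\mu^*$ back into $\tilde{\bz}^{\star}(\mu^*)=\mu^* P_{Z^i}(\tilde{\bz}_i/\mu^*)$ yields the claimed projection formula. The main nuisance is justifying the Danskin differentiation and the continuous extension at $\mu=0$: one must verify that $\mu\bp(\mu)\to \bzero$ as $\mu\downarrow 0$ (which follows from $\bp(\mu)\in Z^i$ bounded) and that the right derivative at $\mu=0$ is handled correctly when $\tilde\mu\le 0$, which is precisely the case that the $\max\{\cdot,0\}$ clipping covers.
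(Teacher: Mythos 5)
Your proposal is correct and follows essentially the same route as the paper: the paper proves Proposition~\ref{prop:proj_firstformUi} by reducing the projection to a one-dimensional strongly convex problem in $\mu$ via partial minimization over $\mu Z^i$, differentiating the resulting $\psi_i(\mu)$ to obtain exactly your stationarity equation, and the corollary is then obtained (as you do) by clipping the unique unconstrained root to the interval $[0,\bar{\lambda}]$, which is valid for a one-dimensional strictly convex function. Your use of Danskin's theorem in place of the paper's appeal to the gradient formula for the squared distance function, and your slightly looser phrasing of the convexity of $h(\mu)$ (the paper argues via partial minimization of a jointly convex problem), are only cosmetic differences.
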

	
	Proposition~\ref{prop:proj_firstformUi} and Corollary~\ref{cor:proj_secondformUi} suggest that we can always obtain the projection onto sets $U^i$ and $\tilde{U}^i$, which are the conic extension of $Z^i$, by applying a bi-section procedure to find the value of $\mu$ that satisfies the appropriate equality constraint.

	Table~\ref{tbl:proj} illustrates three examples, the $\ell_2$, $\ell_\infty$,  $\ell_1$ balls, for which the projection onto $Z^i$ is obtained either analytically or in $O(n)$, and similarly the value of the optimal $\mu^*$, and therefore the projections onto their conic extensions can also be computed in $O(n)$.
	
	\begin{table}
		\TABLE{Examples of projections onto $\tilde{U}$ \label{tbl:proj}}
		{\scriptsize
			\begin{tabular}{lll}
				\hline
				$Z^i$ & $P_{Z^i}(\by)$ & $P_{\tilde{U}^i}(\by,\lambda)=\left(\mu^*P_{Z^i}\left(\frac{\by}{\mu^*}\right),\mu^*\right)$\\
				\hline\hline
				$\{\bz:\norm{\bz}_2\leq 1\}$ &$\begin{cases}\by & \by\in Z\\ \frac{\by}{\norm{\by}} &\text{otherwise}\end{cases}$ &
				$\mu^*=\max\left\{\min\left\{\frac{\lambda+{\norm{\by}_2}}{2},\bar{\lambda}\right\},0\right\}$ \footnotemark[2]\\
				\hline
				$\{\bz:\norm{\bz}_\infty\leq 1\}$ &$\min\left\{\be,{|\by|}\right\}\circ\text{sign}(\by)$ &
				\makecell[tl]{$\mu^*=\min\left\{\frac{\sum_{i\leq j^*} |q_{(i)}|+\lambda}{j^*+1},\bar{\lambda}\right\}$ \footnotemark[2]\\
					$j^*=\max\left\{j\in[d]:|q_{(j)}|\geq \frac{\sum_{i\leq j} |q_{(i)}|+\lambda}{j+1}\right\}$ \footnotemark[1].}\\ 
				\hline
				$\{\bz:\norm{\bz}_1\leq 1\}$ &\makecell[tl]{$\max\{|\bq|-\theta^*\be,\bzero\}\circ\text{sign}(\bq)$\\ where 
					$\theta^*= \frac{\sum_{i\leq j^*}	|q_{(i)}|-1}{j^*}$\\
					$j^*=\max\left\{j\in[d]: |q_{(j)}|\geq \frac{\sum_{i\leq j}	|q_{(i)}|-1}{j}\right\}$\footnotemark[1].}&
				\makecell[tl]{$\mu^*=\min\left\{\frac{\sum_{i\leq j^*}|q_{(i)}|+\lambda}{j^*+1},\bar{\lambda}\right\}$\footnotemark[2]\\
					$j^*=\max\left\{j\in[d]:|q_{(j)}|\geq \frac{\sum_{i\leq j} |q_{(i)}|-\lambda}{j+1} \right\}$\footnotemark[1].}\\ 
				\hline
		\end{tabular}}
		{$^1$ If the problem is not feasible then $j^*=-\infty$\\
			\indent $^2$ For projection over $U^i$ set $\bar{\lambda}=\infty$}
	\end{table}
	
	\section{Convergence results} \label{sec:generalizations}
	\subsection{EB algorithms}
	To unify analysis for all forms of the Lagrangian function and the algorithms presented in this paper, we define now sufficient properties needed for an algorithm to prove the feasibility and optimality convergence. These sufficient properties will take the form of the algorithm being an \emph{ergodically bounded} (EB) algorithm, as the following definition states.
	\begin{definition}
		Let $\mathcal{A}$ be an iterative algorithm to solve the saddle point problem~\eqref{eq.robust.problem}. We call the algorithm $\mathcal{A}$ \emph{$\psi$-ergodic bounded} ($\psi$-EB) if there exist constant scalars $\phi,\tau,\beta \geq 0$, $\theta_i, \theta_\bw \in \real_+$, and a function $\psi(N):\mathbb{N}\rightarrow \real_+$ with $\psi(N) \downarrow 0$ as $N \rightarrow + \infty$, such that for any initial point $(\bx^0;\blambda^0,\bw^0)$ the algorithm $\mathcal{A}$ generates a sequence  $\{(\bx^k,\blambda^k,\bw^k)\}_{k\in\mathbb{N}}$ satisfying
		\begin{align*}
			&L(\bar{\bx}^N,(\blambda, \bw)) - L(\bx^*,( {\blambda}^*,\bw^*)) +\alpha\dist(\bar{\bx}^N,X) \leq\\
			&\qquad\qquad \psi(N)\left(\shimrit{\frac{\max\{\norm{\bx^*},\norm{\bx^0}\}^2}{\tau}} + \sum_{i=1}^m\theta_i^{-1}\max\{\lambda_i,\lambda_i^0\}^2+\theta_{\bw}^{-1}\max\{\norm{\bw},\norm{\bw^0}\}^2+\phi(1+\alpha^2) \right),
		\end{align*}
		and 
		$$
		L(\bar{\bx}^N; \blambda^*,\bw^*)- L(\bx^*; \blambda^*,\bw^*) + \beta \dist(\bar{\bx}^N,X) \geq 0.
		$$
		for all iterations $N\in\mathbb{N}$, dual variables $\blambda\in\real^m_+$, $\bw\in\real^r$, parameters $\alpha\geq 0$ and saddle points $(\bx^*,(\blambda^*,\bw^*))$ of \eqref{eq:saddle_point_simple}.
	\end{definition}
	\shimrit{We note that this definition allows to consider algorithms which generate sequences $\{\bx^k\}_{k\in\mathbb{N}}$ such that $\bx^k\notin X$. In this case, the convergence result to a feasible solution also considers the rate of convergence to $X$.}
	The following theorem shows that the ergodic sequence of any $\psi$-EB algorithm converges to feasibility and optimality at the rate at which $\psi$ converges to zero.
	\begin{theorem} \label{thm.EB.algorithms.convergence}
		Let problem~\eqref{eq.robust.problem} satisfy Assumption~\ref{ass:Optimal} and \ref{ass:Slater}. Let $\mathcal{A}$ be an $\psi$-EB algorithm to solve the saddle point problem~\eqref{eq.robust.problem} with parameters $\phi,\tau,\beta$ and $\btheta$.
		For a given starting point $(\bx^0,(\blambda^0,\bw^0))\in\real^n\times\real_+^m\times\real^p$ let $\bar{\bx}^N$ be the ergodic primal sequence generated by the algorithm. Then, for any optimal solution $\bx^*$ to \eqref{eq.robust.problem} we have that
		\begin{align*}
			&\sum_{i=1}^m [f_i(\bar{\bx}^N)]_+ +\norm{\bba\bar{\bx}^N-\bb} + \dist(\bar{\bx}^N,X) \leq\\
			&\quad \psi(N)\left(\frac{\shimrit{\max\{\norm{\bx^*},\norm{\bx^0}\}^2}}{\tau} + \sum_{i=1}^m \frac{\max\{\bar{\lambda}+1, \lambda_i^0\}^2}{\theta_i} + \frac{\max\{R_\bw+1,\norm{\bw^0}\}^2}{\theta_{\bw}} + \phi (1+(1+\beta)^2)\right),
		\end{align*}
		and
		\begin{align*}
			&|\bc^\top(\bar{\bx}^N-\bx^*)|\leq \\
			&\qquad \psi(N)\left(\frac{\shimrit{\max\{\norm{\bx^*},\norm{\bx^0}\}^2}}{\tau} + \sum_{i=1}^m \frac{\max\{2\bar{\lambda}, \lambda_i^0\}^2}{\theta_i} + \frac{\max\{2R_\bw,\norm{\bw^0}\}^2}{\theta_{\bw}} +\phi(1+4\beta^2)\right).
		\end{align*}
	\end{theorem}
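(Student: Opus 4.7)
The plan is to extract both bounds by invoking the two defining inequalities of the $\psi$-EB property with carefully chosen multipliers $(\blambda,\bw)$ and distance-penalty parameter $\alpha$, and then combining them. A preliminary observation is that at any saddle point $(\bx^*,(\blambda^*,\bw^*))$ one has $L(\bx^*,(\blambda^*,\bw^*))=\bc^\top\bx^*$, since $\bx^*$ is primal feasible ($\bba\bx^*=\bb$ and $f_i(\bx^*)\leq 0$) and complementary slackness yields $\lambda_i^*f_i(\bx^*)=0$. Moreover, for any $(\blambda,\bw)$,
\[
L(\bar{\bx}^N,(\blambda,\bw))-L(\bx^*,(\blambda^*,\bw^*))=\bc^\top(\bar{\bx}^N-\bx^*)+\sum_i\lambda_if_i(\bar{\bx}^N)+\bw^\top(\bba\bar{\bx}^N-\bb),
\]
which exposes the three quantities to control. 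Throughout I invoke Proposition~\ref{prop:bounded_lambda} to substitute $\lambda_i^*\leq\bar{\lambda}$ and $\|\bw^*\|\leq R_\bw$ whenever they appear via the second EB inequality.

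For the feasibility bound, I instantiate the first EB inequality with $\lambda_i=\bar{\lambda}+1$ whenever $f_i(\bar{\bx}^N)>0$ (else $\lambda_i=0$), $\bw=(R_\bw+1)(\bba\bar{\bx}^N-\bb)/\|\bba\bar{\bx}^N-\bb\|$, and $\alpha=1+\beta$, producing
\[
\bc^\top(\bar{\bx}^N-\bx^*)+(\bar{\lambda}+1)\sum_i[f_i(\bar{\bx}^N)]_+ +(R_\bw+1)\|\bba\bar{\bx}^N-\bb\|+(1+\beta)\dist(\bar{\bx}^N,X)\leq\psi(N)(\cdots),
\]
whose right-hand side carries $\max\{\bar{\lambda}+1,\lambda_i^0\}^2$, $\max\{R_\bw+1,\|\bw^0\|\}^2$, and $\phi(1+(1+\beta)^2)$. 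The second EB inequality, together with $\lambda_i^*\leq\bar{\lambda}$ and $\|\bw^*\|\leq R_\bw$, delivers
\[
\bc^\top(\bar{\bx}^N-\bx^*)\geq -\bar{\lambda}\sum_i[f_i(\bar{\bx}^N)]_+ -R_\bw\|\bba\bar{\bx}^N-\bb\|-\beta\dist(\bar{\bx}^N,X).
\]
Replacing $\bc^\top(\bar{\bx}^N-\bx^*)$ in the previous display by this lower estimate only lowers the left-hand side and preserves the inequality; the $\bar{\lambda}$ and $R_\bw$ contributions cancel and unit coefficients remain on $\sum_i[f_i(\bar{\bx}^N)]_+$, $\|\bba\bar{\bx}^N-\bb\|$, and $\dist(\bar{\bx}^N,X)$, giving exactly the stated feasibility bound.

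For the optimality bound, the upper estimate $\bc^\top(\bar{\bx}^N-\bx^*)\leq\psi(N)(\cdots)$ follows from the first EB inequality with $\blambda=\bzero$, $\bw=\bzero$, $\alpha=0$, after noting $\max\{0,\lambda_i^0\}\leq\max\{2\bar{\lambda},\lambda_i^0\}$ (and analogously for $\bw$) and $1\leq 1+4\beta^2$. For the lower estimate, I apply the first EB inequality with $\blambda=2\blambda^*$, $\bw=2\bw^*$, $\alpha=2\beta$, giving
\[
\bc^\top(\bar{\bx}^N-\bx^*)+2\sum_i\lambda_i^*f_i(\bar{\bx}^N)+2(\bw^*)^\top(\bba\bar{\bx}^N-\bb)+2\beta\dist(\bar{\bx}^N,X)\leq\psi(N)(\cdots).
\]
Twice the second EB inequality provides the lower bound $2\sum_i\lambda_i^*f_i(\bar{\bx}^N)+2(\bw^*)^\top(\bba\bar{\bx}^N-\bb)+2\beta\dist(\bar{\bx}^N,X)\geq -2\bc^\top(\bar{\bx}^N-\bx^*)$ for the last three terms, so the left-hand side is bounded below by $-\bc^\top(\bar{\bx}^N-\bx^*)$, whence $-\bc^\top(\bar{\bx}^N-\bx^*)\leq\psi(N)(\cdots)$. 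The choices $\lambda_i=2\lambda_i^*\leq 2\bar{\lambda}$ and $\|\bw\|\leq 2R_\bw$ produce $\max\{2\bar{\lambda},\lambda_i^0\}^2$ and $\max\{2R_\bw,\|\bw^0\|\}^2$, while $\alpha=2\beta$ produces $\phi(1+4\beta^2)$.

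The principal subtlety lies in the coordinated choice of $\alpha$ and of the magnitudes of $(\blambda,\bw)$: they must be calibrated so that the dual penalties produced by the first EB inequality cancel exactly against those coming from the second EB inequality, leaving unit coefficients on the feasibility quantities and the precise factors $(1+\beta)^2$ and $4\beta^2$ in the $\phi$-term. Beyond this balance, the remaining steps are routine bookkeeping built from Proposition~\ref{prop:bounded_lambda}, the saddle-point identity $L(\bx^*,\cdot)=\bc^\top\bx^*$, and elementary Cauchy--Schwarz estimates.
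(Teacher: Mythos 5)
Your proposal is correct and follows essentially the same strategy as the paper's proof: invoke the first EB inequality with inflated dual multipliers and a calibrated $\alpha$, then use the second EB inequality (plus the bounds $\lambda_i^*\leq\bar{\lambda}$, $\|\bw^*\|\leq R_\bw$ from Proposition~\ref{prop:bounded_lambda}) to cancel the excess and leave unit coefficients on the feasibility terms. The only cosmetic differences are that the paper perturbs the saddle-point multipliers ($\blambda=\blambda^*+\br$, $\bw=\bw^*+\kappa_2(\bba\bar{\bx}^N-\bb)/\|\bba\bar{\bx}^N-\bb\|$) and obtains the optimality lower bound by reusing its parametric feasibility estimate with $\kappa_1=\bar{\lambda}$, $\kappa_2=R_\bw$, $\kappa_3=2\beta$, whereas you set the multipliers directly and use doubled saddle-point multipliers; both yield identical constants.
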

	\begin{proof}{Proof.}
		Let $\kappa_1>0$ and let $\br\in\real_+^m$ be a multiplication of indicator vector of constraint violations by $\kappa_1$, \emph{i.e.}:
		$$
		r_i=\begin{cases} \kappa_1, & f_i(\bar{\bx}^N)>0,\\
			0, &\text{otherwise},\end{cases}, \ i \in [m]
		$$
		and let $\blambda=\blambda^*+\br$ and let $\kappa_2>0$ and $\bw=\bw^*+\kappa_2\frac{\left(\bba\bar{\bx}^N-\bb\right)}{\norm{\bba\bar{\bx}^N-\bb}}$ where $(\bx^*;\blambda^*,\bw^*)$ is a saddle point of \eqref{eq:basic_saddle_point}. Then, 
		\begin{align}
			L(\bar{\bx}^N; \blambda,\bw)&=\bc^\top\bar{\bx}^N+\sum_{i=1}^m \lambda_i f_i(\bar{\bx}^N)+\bw^\top(\bba\bar{\bx}^N-\bb)\nonumber\\
			&=\bc^\top\bar{\bx}^N+\sum_{i=1}^m (\lambda_i^* + r_i) f_i(\bar{\bx}^N) + \bw^*\left(\bba\bar{\bx}^N-\bb\right)+ \kappa_2 \norm{\bba\bar{\bx}^N-\bb}\nonumber\\
			&=\bc^\top\bar{\bx}^N+\sum_{i=1}^m \lambda^*_i f_i(\bar{\bx}^N)+\kappa_1 \sum_{i=1}^m [f_i(\bar{\bx}^N)]_+ + \bw^*\left(\bba\bar{\bx}^N-\bb\right)+\kappa_2  \norm{\bba\bar{\bx}^N-\bb}\nonumber\\
			&=L(\bar{\bx}^N; \blambda^*,\bw^*)+\kappa_1 \sum_{i=1}^m [f_i(\bar{\bx}^N)]_+ + \kappa_2  \norm{\bba\bar{\bx}^N-\bb}\label{eq:feas}
		\end{align}
		Since $\mathcal{A}$ is an $\psi$-EB algorithm then
		\begin{equation}
			L(\bar{\bx}^N; \blambda^*,\bw^*)+\beta \dist(\bar{\bx}^N,X)\geq \bc^\top\bx^*=L(\bx^*;\blambda^*,\bw^*),
			\label{eq:lb3}
		\end{equation}
		and for all $\kappa_3>0$
		\begin{align}
			& L(\bar{\bx}^N; \blambda,\bw) - L(\bx^*; {\blambda}^*,\bw^*) +\kappa_3\dist(\bar{\bx}^N,X)\leq \nonumber \\ 
			&  \psi(N)\left( \frac{\shimrit{\max\{\norm{\bx^*},\norm{\bx^0}\}^2}}{\tau} + \sum_{i=1}^m \frac{\max\{\lambda_i,\lambda_i^0\}^2}{\theta_i} + \frac{\max\{\norm{\bw},\norm{\bw^0}\}^2}{\theta_{\bw}} +\phi(1+\kappa_3^2)\right).\label{eq:ub3}
		\end{align}
		Since Assumptions~\ref{ass:Optimal} and \ref{ass:Slater} hold, it follows from Proposition~\ref{prop:bounded_lambda} that $\lambda_i^*\leq \bar{\lambda}$ and thus
		$$
		\max\{\lambda_i,\lambda_i^0\}= \max\{\lambda_i^*+r_i,\lambda_i^0\} \leq \max\{\bar{\lambda}+\kappa_1,\lambda_i^0\}.
		$$
		Similarly, since $\norm{\bw^*}\leq R_\bw$, thus,
		\begin{align}
			\max\{\norm{\bw},\norm{\bw^0}\} & = \max\left\{\left\|\bw^*+\kappa_2 \frac{\left(\bba\bar{\bx}^N-\bb\right)}{\norm{\bba\bar{\bx}^N-\bb}}\right\|,\norm{\bw^0}\right\} \nonumber \\
			& \leq \max\{\norm{\bw^*}+\kappa_2,\norm{\bw^0}\} \nonumber \\
			& \leq \max\{R_\bw+\kappa_2,\norm{\bw^0}\}.\label{eq:ub3_c}
		\end{align}
		Combining inequalities \eqref{eq:feas}-\eqref{eq:ub3_c} we obtain
		\begin{align}
			& \kappa_1\sum_{i=1}^m [f_i(\bar{\bx}^N)]_+ + \kappa_2 \| \bba\bar{\bx}^N-\bb \| +(\kappa_3-\beta) \dist(\bar{\bx}^N,X) \nonumber \\
			\leq &\psi(N)\left( \frac{\shimrit{\max\{\norm{\bx^*},\norm{\bx^0}\}^2}}{\tau} + \sum_{i=1}^m \frac{\max\{\bar{\lambda}+\kappa_1,\lambda_i^0\}^2}{\theta_i} + \frac{\max\{R_\bw+\kappa_2,\norm{\bw^0}\}^2}{\theta_{\bw}} +\phi(1+\kappa_3^2)\right).\label{eq:feas_bound}
		\end{align}
		Setting $\kappa_1=\kappa_2=1$ and $\kappa_3=\beta+1$ we obtain the first result. For the second result, we need to upper and lower bound $\bc^\top(\bar{\bx}^N-\bx^*)$. First, we have  
		\begin{equation}
			\bc^\top(\bar{\bx}^N-\bx^*)= L(\bar{\bx}^N; \bzero,\bzero) - L(\bx^*; \blambda^*,\bw^*).
			\label{eq:ub4}
		\end{equation}
		where the equalities follow from $L(\bx^*; \blambda^*,\bw^*)=\bc^\top\bx^*$ and $L(\bar{\bx}^N; \bzero,\bzero)=\bc^\top\bar{\bx}^N$. Using \eqref{eq:ub3} with $\kappa_3=0$ we obtain
		\begin{equation} 
			L(\bar{\bx}^N; \bzero,\bzero)-L(\bx^*; \blambda^*,\bw^*)\leq \psi(N)\left(\frac{ \shimrit{\max\{\norm{\bx^*},\norm{\bx^0}\}^2}}{\tau} + \sum_{i=1}^m\frac{(\lambda_i^0)^2}{\theta_i}+\frac{\norm{\bw^0}^2}{\theta_{\bw}}+\phi\right).\label{eq:ub6}
		\end{equation}
		Combining \eqref{eq:ub4} with \eqref{eq:ub6} we have that
		\begin{equation}
			\bc^\top(\bar{\bx}^N-\bx^*)\leq \psi(N)\left(\tau^{-1} \| \bx^* - \bx^0 \|^2 + \sum_{i=1}^m\theta_i^{-1}(\lambda_i^0)^2+\theta_{\bw}^{-1}\norm{\bw^0}^2+\phi\right).\label{eq:opt_bound1}
		\end{equation}
		To obtain the other side of the bound, we use \eqref{eq:lb3} to obtain
		\begin{align*}
			\bc^\top(\bx^*-\bar{\bx}^N)-\sum_{i=1}^m \lambda^*_i f_i(\bar{\bx}^N) - (\bw^*)^\top(\bba\bar{\bx}^N-\bb)= L(\bx^*;\blambda^*,\bw^*)-L(\bar{\bx}^N; \blambda^*,\bw^*) \leq \beta \dist(\bar{\bx}^N,X),
		\end{align*}
		which in turn implies
		\begin{align}
			\bc^\top(\bx^*-\bar{\bx}^N)&\leq \sum_{i=1}^m \lambda^*_i f_i(\bar{\bx}^N)+(\bw^*)^\top(\bba\bar{\bx}^N-\bb)+ \beta \dist(\bar{\bx}^N,X)\nonumber\\ 
			&\leq\bar{\lambda}\sum_{i=1}^m  [f_i(\bar{\bx}^N)]_+ + R_\bw\norm{\bba\bar{\bx}^N-\bb}+ \beta \dist(\bar{\bx}^N,X),\label{eq:ub7}
		\end{align}
		where the last inequality follows from the the fact that $0\leq\lambda^*_i\leq \bar{\lambda}$ and $\norm{\bw^*}\leq R_\bw$. Using \eqref{eq:feas_bound} with $\kappa_1=\bar{\lambda}$, $\kappa_2=R_\bw$, and $\kappa_3= 2\beta$ to bound \eqref{eq:ub7} we obtain that
		\begin{align}
			\bc^\top(\bx^*-\bar{\bx}^N)\leq \psi(N)\left( \frac{\shimrit{\max\{\norm{\bx^*},\norm{\bx^0}\}^2}}{\tau} + \sum_{i=1}^m\frac{\max\{2\bar{\lambda},\lambda_i^0\}^2}{\theta_i} + \frac{\max\{2R_\bw,\norm{\bw^0}\}^2}{\theta_{\bw}} +\phi(1+(2\beta)^2)\right)\label{eq:opt_bound2}
		\end{align}
		Combining \eqref{eq:opt_bound1} and \eqref{eq:opt_bound2} we obtain the desired result. \hfill \qedsymbol
	\end{proof}
	
	\subsection{Generalized model}
	So far, we kept the problem formulations simple and stated the convergence results without proofs. In this section, we shall show the convergence of both the SGSP and CP algorithms for a more general model using the EB-algorithm definition.
	
	The general model is needed to tackle the fact that projections onto sets $X$ or $Z^i$ are the main tools used in both algorithms, and therefore these projections should be simple. Two issues that might arise is that either the primal domain $X$ or at least one of the sets $Z^i$ is not `simple', but instead, it is an intersection of several simple sets.
	\begin{example}
		One of the popular uncertainty sets in RO is the so-called budgeted uncertainty set, formulated as:
		$$
		Z = \{ \bz: \ -1 \leq \bz \leq 1, \ \| \bz \|_1 \leq \Gamma \}
		$$
		which is an intersection of two simple sets: the $\ell_\infty$ norm and $\ell_1$ norm balls.
	\end{example}
	Our strategy for dealing with these complex sets shall be to disentangle the projections onto the intersected sets. We will achieve this by including `copies' of the respective primal or dual variables, together with the relevant equality constraints, which are to be relaxed in the Lagrangian. The saddle-point algorithm would then be applied to the new Lagrangian problem.
	
	Consider, for example, the case where $X = \cap_{j = 1}^q X_i$ is an intersection of several `simple' sets. Formulation~\ref{eq.robust.problem} is ready to handle this situation easily. One can `expand' the vector $\bx$ by having $q$ copies of it: $\bx \mapsto [\bx_1, \ldots, \bx_q]$. In the next step, each inequality constraint in the problem is made dependent only on one of the $\bx_i$'s and at the same time, equality constraints  $\bx_1 = \bx_i $, $i = 2,\ldots, q$ become embedded in the $\bba \bx = \bb$ system (where the rank condition is easily verified).
	
	Similarly, we can consider the case where ${Z}^i = \cap_{l = 1,\ldots, s_i} Z^{i,l}, \quad i = 1,\ldots, m$ with $Z^{i,l}$ being compact convex sets. In this case, the constraint $\bz_i \in \cap_{l = 1,\ldots, s_i} Z^{i,l}$ can be written using concatenated uncertain parameter vector $\hat{\bz}_i = (\bz_{i, 1}, \ldots, \bz_{i, s_i})$, defining 
	$$
	\hat{Z}^{i}=\left\{(\bz_{i, 1}, \ldots, \bz_{i, s_i})\in Z^{i,1}\times\ldots\times Z^{i,s_i}:\;\bz_{i,l} = \bz_{i, s_i}, \; l=1,\ldots, s_i-1\right\},
	$$ 
	and formulating the constraints as
	$$
	\max\limits_{\hat{\bz}_{i} \in \hat{Z}^i } \hat{g}_i(\bx_1, \hat{\bz}_{i})\equiv {g}_i(\bx_1, \bz_{i, s_i}) \leq 0
	$$
	where $Z^{i,l}$, $l = 1,\ldots, s_i$ are `simple' sets on which it is easy to project. Thus, the general model we are going to address is the following:
	\begin{align}
		\min\limits_{\bx \in X} \ &  \bc^\top\bx  \label{eq.robust.extended} \\
		\text{s.t.} \; &  f_i(\bx)\coloneqq \max\limits_{\bz_i \in Z^i } g_i(\bx,\bz_i)\leq 0\quad i \in [m] \nonumber \\
		\ & \bba \bx = \bb \nonumber
	\end{align}
	where $Z^i = \cap_{l = 1}^{s_i} Z^{i,l}$. For both SGSP and CP we shall give the problem generalizations together with the expanded Lagrangian that is to be solved.

	Consider problem \eqref{eq.robust.extended} where we need to formulate a saddle point problem that disentangles different components of the $\bx$ and $\bz_i$.  We start by considering a saddle point $(\bx^*,(\bu^*,\bw^*))$ of $\bar{L}(\bx,(\bu,\bw))$ over the sets $\bx\in X$ and $\bu_i\in U^i$, which we already proved exists.
	Thus, by the definition of the saddle point, we have that
	\begin{align*}\tilde{g}_i(\bx^*,\bu^*_i)=\sup_{\bu_i\in U^{i}} \tilde{g}_i(\bx^*,\bu_i)=\sup_{\substack{\bu_{i,l}\in U^{i,l},\\ u_{i,s_i}=u_{i,l}, l\in[s_i-1]}} \tilde{g}_i(\bx^*,\bu_{i,s_i})\end{align*}
	where the last equality follows from the definition of $U^{i}=\cap_{i\in[s_i]} U^{i,l}$ where $U^{i,l}=\{(\tilde{\bz},\lambda_i):\tilde{\bz}\in \lambda_{i}Z^{i,l}\}$, similarly to what was explained in the previous section. Note that the leftmost supremum has a solution ($\bu_{i,1}=\bu_{i,2}=\ldots=\bu_{i,s_i}=\bu_i^*$). Moreover, since $U^i\subseteq U^{i,l}$ for $l\in[s_i]$, the sets $U^{i,l}$ have a nonempty interior, therefore, dualizing the equality constraints, we have that strong duality holds.
	Thus, denoting $\tilde{\bu}^*_i=({\bu}^*_{i,1},\ldots,{\bu}^*_{i,s_i})\equiv(\bu^*_i,\ldots,\bu^*_i)$
	there exists $\bomega^*_i=(\bomega^*_{i,1},\ldots,\bomega^*_{i,s_i-1})$ such that $(\bomega_i^*,\tilde{\bu}_i^*)$ satisfies
	\begin{align}\tilde{g}_i(\bx^*,\bu^*_{i})&=\sup_{\substack{\bu_{i,l}\in U^{i,l},\\ u_{i,s_i}=u_{i,l}, l\in[s_i-1]}} \tilde{g}_i(\bx^*,\bu_{i,s_i})\nonumber\\
		&=\inf_{\omega_{i,l}\in \real^{d_i+1},l\in [s_i-1]}\sup_{\substack{\bu_{i,l}\in U^{i,l}, l\in[s_i-1]}} \tilde{g}_i(\bx^*,\bu_{i,s_i})+\sum\limits_{l = 1}^{s_i - 1} \bomega_{i,l}^\top (\bu_{i,l}-\bu_{i,s_i})\nonumber\\
		&=\inf_{\bomega_{i,l},\ l\in[s_i-1]} \tilde{g}_i(\bx^*,\bu^*_{i,s_i})+\sum\limits_{l = 1}^{s_i - 1} \bomega_{i,l}^\top (\bu^*_{i,l}-\bu^*_{i,s_i})\nonumber\\
		&=\sup_{\substack{\bu_{i,l}\in U^{i,l},\ l\in[s_i-1]}} \tilde{g}_i(\bx^*,\bu_{i,s_i})+\sum\limits_{l = 1}^{s_i - 1} (\bomega^*_{i,l})^\top (\bu_{i,l}-\bu_{i,s_i}).\label{eq:strong_dual}
	\end{align}
	Due to Proposition~\ref{prop:bounded_lambda} we know that we can restrict $U^{i}$ to $\tilde{U}^{i}$, and thus, the existence of the saddle point in this case follows the same logic where $U^{i,l}$ is replaced by $\tilde{U}^{i,l}=\{\bu_{i,l}\equiv(\tilde{\bz}_{i,l},\lambda_{i,l})\in U^{i,l}: \lambda_{i,l}\leq \bar{\lambda}\}$. In the following, we will show that we can restrict the domain over which we optimize $\bomega$ and still retrieve a saddle point of the original problem. 
	
	Denoting $\bchi=(\bx,\bomega)$ and $\by=(\tilde{\bu},\bw)$ where $\bomega=(\bomega_1,\ldots,\bomega_m)$, $\tilde{\bu}=(\tilde{\bu}_1,\ldots,\tilde{\bu}_m)$, and  $\bomega_i=(\bomega_{i1},\ldots,\bomega_{i(s_i-1)})$, $\tilde{\bu}_i=(\bu_{i1},\ldots,\bu_{is_i})$ for all $i\in[m]$, we can define a saddle point in the lifted space.
	Indeed, let $(\bx^*,(\blambda^*,\bw^*))$ be a saddle point of $L(\bx,(\blambda,\bw))$, then, by Proposition~\ref{prop:eq_saddle_points}, and the reasoning leading to \eqref{eq:strong_dual} above, there exists $\tilde{\bu}^*$ and $\bomega^*$ such that
	\begin{align}
		&\sup\limits_{\blambda \geq 0, \bw} L(\bx^*,(\blambda,\bw))\nonumber\\
		&= \sup\limits_{\substack{\bw\in W,\\
				\bu_i\in U^{i},\; i\in[m]}} \bc^\top \bx^* + \bw^\top (\bba \bx^* - \bb) + \sum\limits_{i = 1}^m \tilde{g}_i \left( \bx^*,\bu_i \right) \nonumber  \\ 
		&= \sup\limits_{\substack{\bw\in W,\\
				\bu_{i, l} \in  U^{i,l},\;i\in[m],l\in[s_i-1]}} \bc^\top \bx^* + \bw^\top (\bba \bx^* - \bb) + \sum\limits_{i = 1}^m  \tilde{g}_i \left( \bx, \bu_{i,s_i} \right) + \sum\limits_{i = 1}^m \sum\limits_{l = 1}^{s_i - 1} (\bomega^*_{i, l})^\top (\bu_{i,l} - \bu_{i, s_i}) \nonumber 
	\end{align}
	and
	\begin{align}
		&\inf_{\bx\in X} L(\bx,(\blambda^*,\bw^*))\nonumber\\
		&= \inf_{\bx\in X} \bc^\top \bx + (\bw^*)^\top (\bba \bx - \bb) + \sum\limits_{i = 1}^m \tilde{g}_i \left( \bx,\bu^*_i \right) \nonumber  \\ 
		&= \inf_{\substack{\bx\in X\\
				\bomega_{i, l},\;i\in[m],l\in[s_i-1]}} \bc^\top \bx^* +  (\bw^*)^\top (\bba \bx - \bb) + \sum\limits_{i = 1}^m  \tilde{g}_i \left( \bx, \bu_{i,s_i} \right) + \sum\limits_{i = 1}^m \sum\limits_{l = 1}^{s_i - 1} (\bomega_{i, l})^\top (\bu^*_{i,l} - \bu^*_{i, s_i}). \nonumber 
	\end{align}
	Defining 
	\begin{align}\breve{L}(\bchi,\by):= \bc^\top \bx + \bw^\top (\bba \bx - \bb) + \sum\limits_{i = 1}^m  \tilde{g}_i \left( \bx, \bu_{i,s_i} \right)+ \sum\limits_{i = 1}^m \sum\limits_{l = 1}^{s_i - 1} \bomega_{i, l}^\top (\bu_{i,l} - \bu_{i, s_i}) \label{eq.robust.extended.sp1}. 
	\end{align}
	we obtain that $(\bchi^*,\by^*)$ is a saddle point of $\breve{L}$. Following similar logic, we can also obtain that given a saddle point $(\bchi^*,\by^*)$ of $\breve{L}$ with $\bu_{i,s_i}^*=(\tilde{\bz}^*_{i,s_i},\lambda_{i,s_i}^*)$ we can obtain a saddle point $(\bx^*,(\blambda^*,\bw^*))$ of $L$ by defining $\lambda_i^*=\lambda_{i,s_i}^*$, $\blambda^*=(\lambda_1^*,\ldots,\lambda_m^*)$.
	
	Since we proved that $\breve{L}$ has a saddle point, we can now show that SGSP algorithm applied to \eqref{eq.robust.extended.sp1} meets the EB-algorithm assumptions.
	
	However, to run the algorithm and prove its convergence, we need to show that the feasible sets of the  variables can be restricted without losing a saddle point. Legitimacy of bounding $\bu_{i, l}$ and $\bw$ follows from Proposition~\ref{prop:bounded_lambda} and \eqref{eq:strong_dual}. The following proposition establishes our ability to bound $\bomega_{i,l}$.
	\begin{proposition} \label{proposition:bounded.omega}
		Let Assumptions~\ref{ass:Optimal}--\ref{ass:constraints_bounded_below} hold. Let $i\in[m]$ and consider the saddle point problem
		\begin{align}
			\min_{\bomega_i } \max_{\substack{\bu_{i,l}\in \tilde{U}^{i,l} ,\ l\in [s_i], \\ \utilde{\lambda}\leq \lambda_{i,s_i} \leq \tilde{\lambda}}} \tilde{g}_i( \bx^* ,\bu_{i,s_i}) + \sum\limits_{l = 1}^{s_i - 1} \bomega_{i, l}^\top (\bu_{i,l} - \bu_{i, s_i})\label{eq.sp.harder}
		\end{align}
		where $\bx^*$ is an optimal solution of \eqref{eq.robust.extended}, and $0 \leq \utilde{\lambda}\leq \tilde{\lambda} \leq \bar{\lambda}$. Then, there exists a saddle point $(\bomega_i^*,\tilde{\bu}_i^*)$ with $\bomega^*_{i, l} = (\bnu^*_{i,l},\mu^*_{i,l})$ such that $\bomega_{i,l}^*$ is contained in the set
		\begin{align*}
			\Omega^{i,l} = \left\{ \bomega_{i, l} = ( \bnu_{i,l},\mu_{i,l}): -\bar{\mu}_i\leq \mu_{i,l} \leq 0, \; \norm{\bnu_{i,l}} \leq \frac{-{\mu}_{i,l}}{\epsilon_i},\; i\in [m],\;l\in[s_i-1] \right\},
		\end{align*}
		where $\epsilon_i$ and $\bar{\mu}_i$ is given by Assumptions \ref{ass:zero_in_int_Zi} and \ref{ass:constraints_bounded_below}, respectively.
	\end{proposition}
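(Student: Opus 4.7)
My plan is to carry out a four-step argument, using the saddle-point inequality together with carefully chosen primal perturbations that exploit Assumptions~\ref{ass:zero_in_int_Zi} and \ref{ass:constraints_bounded_below}.

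\emph{Step 1: Existence and basic structure.} I will first invoke Sion's minimax theorem -- identically to the derivation of \eqref{eq:strong_dual} above -- to assert that the inner problem admits a saddle point $(\bomega^*_i, \tilde{\bu}^*_i)$, with $\bu^*_{i,l} = \bu^*_{i,s_i} \equiv \bu^*_i = (\tilde{\bz}^*_i, \lambda^*_i) \in \tilde{U}^i$ for all $l \in [s_i-1]$, by primal feasibility. The saddle-point value is $v^* = \tilde{g}_i(\bx^*, \bu^*_i) \leq 0$, where the last inequality uses that $\bx^*$ is optimal (hence feasible) for \eqref{eq.robust.extended}.

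\emph{Step 2: Normal-cone characterization of $\bomega^*_{i,l}$.} Fixing $\bu_{i,l'} = \bu^*_i$ for all $l' \neq l$ and letting $\bu_{i,l}$ vary over $\tilde{U}^{i,l}$, the saddle-point inequality $\mathcal{L}(\tilde{\bu}, \bomega^*_i) \leq v^*$ collapses to $\bomega^{*,\top}_{i,l}(\bu_{i,l} - \bu^*_i) \leq 0$ for every $\bu_{i,l} \in \tilde{U}^{i,l}$, i.e.\ $\bomega^*_{i,l} \in N_{\tilde{U}^{i,l}}(\bu^*_i)$. This is the workhorse relation.

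\emph{Step 3: Bounds on $\mu^*_{i,l}$ and $\bnu^*_{i,l}$.} Using Assumption~\ref{ass:zero_in_int_Zi}, pick $\bxi = \epsilon_i \bnu^*_{i,l} / \|\bnu^*_{i,l}\| \in Z^{i,l}$ (the case $\bnu^*_{i,l} = \bzero$ is trivial). For small $t > 0$, the perturbation $\bu_{i,l} = \bu^*_i + (t \bxi, t)$ lies in $\tilde{U}^{i,l}$: feasibility of the $\tilde{\bz}$-component follows by rewriting $(\tilde{\bz}^*_i + t\bxi)/(\lambda^*_i + t)$ as a convex combination of $\bz^*_i \in Z^{i,l}$ and $\bxi \in Z^{i,l}$ (using $\bzero \in Z^{i,l}$ and convexity). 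Plugging into the normal-cone inequality of Step 2 yields $t(\epsilon_i \|\bnu^*_{i,l}\| + \mu^*_{i,l}) \leq 0$, so $\mu^*_{i,l} \leq -\epsilon_i \|\bnu^*_{i,l}\| \leq 0$, which gives both $\mu^*_{i,l} \leq 0$ and $\|\bnu^*_{i,l}\| \leq -\mu^*_{i,l}/\epsilon_i$.

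\emph{Step 4: Lower bound $\mu^*_{i,l} \geq -\bar{\mu}_i$.} This is where Assumption~\ref{ass:constraints_bounded_below} enters. I will use the specific, non-optimal primal configuration $\bu_{i,s_i} = (\bzero, \tilde{\lambda})$, $\bu_{i,l} = (\bzero, 0)$, and $\bu_{i,l'} = (\bzero, \tilde{\lambda})$ for $l' \neq l, s_i$, all of which lie in the respective $\tilde{U}^{i,l'}$ since $\bzero \in Z^{i,l'}$. Evaluating the Lagrangian at this point gives $\mathcal{L} = \tilde{\lambda} g_i(\bx^*, \bzero) - \tilde{\lambda} \mu^*_{i,l}$; the saddle-point inequality $\mathcal{L} \leq v^* \leq 0$ combined with $-g_i(\bx^*, \bzero) \leq \bar{\mu}_i$ gives $\mu^*_{i,l} \geq -\bar{\mu}_i$ (when $\tilde{\lambda}>0$; if $\tilde{\lambda} = 0$, any saddle point has $\bomega^* = \bzero \in \Omega^{i,l}$, trivially).

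\emph{Main obstacle.} The main technical difficulty is in Step~3 when $\lambda^*_i$ sits at the boundary $\bar{\lambda}$, so that the perturbation with $t>0$ is no longer admissible because $\lambda_{i,l} = \lambda^*_i + t$ would exceed $\bar{\lambda}$. In this case one must instead consider perturbations that move downward in $\lambda$ while compensating in the $\tilde{\bz}$-component, or, if this fails because $\bz^*_i$ is on the boundary of $Z^{i,l}$, replace the initial saddle point by another one obtained via the dual normal-cone decomposition $N_{\tilde{U}^{i,l}}(\bu^*_i) = N_{K^{i,l}}(\bu^*_i) + N_{\{\lambda \leq \bar{\lambda}\}}(\bu^*_i)$, where the second summand contributes only a nonnegative scalar along the $\lambda$-axis that can be transferred to adjacent multipliers while keeping the first-order optimality in $\bu_{i,s_i}$ intact, so as to land in $\Omega^{i,l}$.
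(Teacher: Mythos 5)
Your proposal is correct in substance and its core computations (Steps 2--4) are sound, but it takes a route that differs from the paper's in one structural respect, and that difference is exactly where your argument is left incomplete. The paper first relaxes each $\tilde{U}^{i,l}$ to the unbounded cone $U^{i,l}$ (its problem \eqref{eq.sp.harder2}), shows via the optimality conditions and the normal-cone inclusion $\partial\delta_{U^{i,l}}(\bu)\subseteq\partial\delta_{\tilde{U}^{i,l}}(\bu)$ that any saddle point of the relaxed problem is also a saddle point of \eqref{eq.sp.harder}, and only then derives the bounds on $\bomega$ --- in the relaxed problem, where test points with arbitrarily large $\lambda_{i,l}$ are always admissible and the boundary issue you call the ``main obstacle'' never arises. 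You instead work directly with the restricted sets, so your upward perturbation $(t\bxi,t)$ genuinely fails when $\lambda^*_{i,s_i}=\tilde{\lambda}=\bar{\lambda}$; your proposed repair (decompose $N_{\tilde{U}^{i,l}}(\bu^*_i)=N_{U^{i,l}}(\bu^*_i)+\real_+(\bzero,1)$ and absorb the nonnegative $\lambda$-axis component into the normal cone of the constraint $\lambda_{i,s_i}\le\tilde{\lambda}$, which is active at the same point) is the right idea and can be made rigorous, but as written it is only a sketch, and carried out in full it amounts to the paper's relaxation performed pointwise. Two smaller remarks. First, in Step 1 you should invoke strong Lagrangian duality for the linearly constrained concave maximization (as the paper does via the argument leading to \eqref{eq:strong_dual}) rather than Sion's theorem alone, since Sion gives equality of the two optimal values but not attainment of the outer minimum over the unbounded $\bomega_i$. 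Second, your simultaneous test point in Step 3 buys something real: it yields the coupled inequality $\epsilon_i\norm{\bnu^*_{i,l}}\le-\mu^*_{i,l}$ directly, which is precisely the inequality defining $\Omega^{i,l}$, whereas the paper's two separate test points give $\epsilon_i\norm{\bnu^\dag_{i,l}}\le-g_i(\bx^*,\bzero)$ and $-\mu^\dag_{i,l}\le-g_i(\bx^*,\bzero)$, which bound both quantities by the same constant but do not formally imply the coupled inequality; your choice is the one that certifies membership in $\Omega^{i,l}$ exactly as stated.
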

	
	Now that we have shown that we can bound all variables in the saddle point of function $\breve{L}(\bchi,\by)$ over the sets $\mathcal{X}=X\times(\times_{i=1}^m\times_{l=1}^{s_i-1}\Omega^{i,l})$ and $\mathcal{Y}=(\times_{i=1}^m\times_{l=1}^{s_i}U^{i,l})\times W$, we can apply SGSP to the problem with these bounded sets as follows.
	\begin{center}
		\vspace{10pt}
		\fcolorbox{black}{gray!20}{
			\begin{minipage}{0.8\textwidth}
				\noindent \textbf{SGSP for $\breve{L}$}\\
				\textbf{Input:} ${\tau>0}, {\theta_{i}>0}, \theta_{\bw} > 0$ for $i \in [m]$, and $N\in\mathbb{N}$\\
				\textbf{Initialization}. Initialize $\bchi^0 \in \mathcal{X}$, and $\by^0\in\mathcal{Y}$ such that $\lambda^0_{i,l}=\lambda^0_{i,1}$ for all $l\in[s_i]$,$i\in[m]$, and define the diagonal matrix $\Theta=\diag(\theta_1\be;\ldots;\theta_m\be;\theta_\bw\be)$.\\
				\textbf{General step:} For $k \in [N]$\\
				Compute subgradients $\bv^{k-1}_\bchi=(\bv^{k-1}_\bx,\bv^{k-1}_\bomega)\in \partial_\bchi\breve{L}(\bchi^{k-1},\by^{k-1})$, and for all $i \in [m]$ $\bv^{k-1}_{\tilde{\bu}_i}=(\bv^{k-1}_{i,1},\ldots,\bv^{k-1}_{i,s_i})\in \partial_{\tilde{\bu}_{i}} \left(-\breve{L}(\bchi^{k-1},\by^{k-1})\right)$.
				\begin{align*}
					\bchi^k&= P_{\mathcal{X}}(\bchi^{k-1} - \tau \bv_\chi^{k-1})&\Longleftrightarrow\quad&
					\bx^{k}= P_X(\bx^{k-1} - \tau \bv_x^{k-1}),&& \\
					&&&\bomega_{i,l}^{k}  = P_{\Omega^{i,l}}(\bomega_{i,l}^{k-1} - \tau(\bu_{i,l}^{k-1}-\bu_{i,s_i}^{k-1})), && i \in [m],\;l\in[s_i-1] &&\\
					\by^k&=P_{\mathcal{Y}}(\by^{k-1} - \Theta \bv_y^{k-1})&\Longleftrightarrow\quad&
					\bu^{k}_{i,l} = P_{\tilde{U}^{i,l}}(\bu_{i,l}^{k-1} + \theta_i \bomega^{k-1}_{i,l}), &&i \in [m],\;l\in[s_i-1] \\
					&&&\bu^{k}_{i,s_i} = P_{\tilde{U}^{i,s_i}}(\bu_{i,s_i}^{k-1} - \theta_i \bv_{i,s_i}^{k-1}), &&i \in [m] \\
					&&&\bw^{k}  = P_{W}(\bw^{k-1} + \theta_\bw (\bba \bx^{k-1} - \bb)).
				\end{align*}
			\end{minipage}
		}
		\vspace{10pt}
	\end{center}
	We will now prove that SGSP is an EB-algorithm and thus, it converges to the optimal solution.
	\begin{proposition} \label{prop.SGSP.convergence}
		Let Assumptions~\ref{ass:Optimal}--\ref{ass:constraints_bounded_below} hold. And let $\{(\bchi^k,\by^k)\}_{k\in\mathbb{N}}$ be the sequence generated by SGSP  algorithm with parameters $\tau=\frac{\tilde{\tau}}{\sqrt{N}}$, $\theta_i =\frac{\tilde{\theta}_i}{\sqrt{N}}$, $\theta_{\bw} =\frac{\tilde{\theta}_{\bw}}{\sqrt{N}}$, where $\bchi^k=(\bx^k,\bomega^k)$ and $\by^k=(\tilde{\bu}^k,\bw^k)$. Then,
		\begin{align}
			& \breve{L}(\bar{\bchi}^N,\by)-\breve{L}(\bchi,\bar{\by}^N) \nonumber \\
			&\quad \leq   \frac{1}{2\sqrt{N}}
			\left(\frac{\norm{\bchi^{0}-\bchi}^2}{\tilde{\tau}} + \tilde{\tau}G_\bchi^2 + \sum\limits_{i=1}^m \left( \frac{\norm{\tilde{\bu}_i^{0} - \tilde{\bu}_i}^2}{\tilde{\theta}_i} + \tilde{\theta}_i G_{\tilde{\bu}_i}^2 \right) + \frac{\norm{\bw^{0}-\bw}^2}{\tilde{\theta}_\bw}+\tilde{\theta}_\bw G_\bw^2 \right) \label{eq.SGSP.EB.result1}
		\end{align}
		where
		\begin{align*}
			G_\bchi& \coloneqq \norm{\bc}+ \|\bba \| R_\bw +\sum_{i\in[m]} \bar{\lambda}G_{\bx,i}+\bar{\lambda}\sum_{i\in[m]}\left((s_i-1)(2+R_{i,s_i})+\sum_{l\in[s_i-1]}R_{i,l}\right) & \\
			G_{\tilde{\bu}_i} & \coloneqq (\bar{\lambda}+R_{i,s_i})G_{\bz,i}+\bar{g}_i+ 2 \bar{\mu}_i(s_i-1)\left( 1+\frac{1}{\epsilon_i} \right) \\
			G_\bw & \coloneqq \norm{\bba} R_\bx + \norm{\bb} 
		\end{align*}
		Moreover, SGSP is an EB-algorithm. More precisely, let $\bx^*$ an optimal solution of \eqref{eq.robust.extended}, $(\blambda^*,\bw^*)$ be the optimal dual variables associated with its constraints, and let $\blambda\in\real^m_+$, and $\bw\in\real^r$ such that $\lambda_i\equiv\lambda_{i,s_i}$ for all $i\in [m]$. Then,
		\begin{align}
			&L(\bar{\bx}^N, ( \blambda,\bw)) - L(\bx^*, ({\blambda}^*, {\bw}^*))  \leq  \nonumber\\
			&\qquad\frac{1}{2\sqrt{N}}\left(\tilde{\tau}^{-1} \| \bx^* - \bx^0 \|^2 + \sum_{i = 1}^m \sigma_i \tilde{\theta}_i^{-1} \max\{\lambda_i,\lambda_i^0\}^2 + 2 \tilde{\theta}_\bw^{-1} \max \{ \|\bw\| , \|\bw_0\| \}^2 + \phi \right), \label{eq.SGSP.EB.result2}
		\end{align}
		and
		\begin{equation}
			L(\bar{\bx}^N; (\blambda^*, \bw^*))\geq \bc^\top\bx^*, \label{eq.SGSP.EB.result3}
		\end{equation}
		where
		\begin{align*}
			\sigma_i = \sum_{l=1}^{s_i} (1+4R_{i,l}^2),\quad \phi & = \tilde{\tau}G_\bchi^2+\sum_{i=1}^m  \tilde{\theta}_i G_{\bu_i}^2+ 4 \frac{ \left( \sum_{i=1}^m\bar{\mu}_i(s_i-1) \left( 1+\frac{1}{\epsilon_i} \right) \right)^2}{ \tilde{\tau}} + \tilde{\theta}_\bw G_\bw^2 \nonumber 
		\end{align*}
		where $R_\bx$ is the radius of $X$, $R_\bw$ the radius of $W$, $R_{i,l}$ is the radius of $Z^{i,l}$, $\bar{g}_i = \max_{\bx\in X,\bz_i\in Z^i} |g_i(\bx,\bz_i)|$, $\bar{\lambda}$, $\bar{\mu}_i$ and $\epsilon_i$ are defined in Proposition~\ref{prop:bounded_lambda}, Assumption \ref{ass:constraints_bounded_below} and Assumption~\ref{ass:zero_in_int_Zi}, respectively, and $G_{\bx, i}, G_{\bz, i} $ are the bounds on the subgradients $\bd^k_{\bx,i}$, $\bd^k_{\bz,i}$ generated as in Lemma~\ref{lem:subdif_def}. 
	\end{proposition}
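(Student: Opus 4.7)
I would prove the proposition in three stages. Stage 1 derives the generic saddle-gap inequality \eqref{eq.SGSP.EB.result1} on the expanded Lagrangian $\breve L$ by the classical projected-subgradient analysis for convex-concave saddle-point problems (in the spirit of Nedic--Ozdaglar). Stage 2 converts this into the Lagrangian-form bound \eqref{eq.SGSP.EB.result2} by making tailored choices of $\bchi$ and $\by$ on the left-hand side of \eqref{eq.SGSP.EB.result1} so that $\breve L$ collapses back to $L$ on both sides. Stage 3 obtains the lower bound \eqref{eq.SGSP.EB.result3} directly from the saddle-point property of $L$ at $\bar\bx^N$.

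For Stage 1 the two main ingredients are nonexpansiveness of the projections onto $\mathcal X$ and $\mathcal Y$, and the subgradient inequality for convex/concave functions. For any $\bchi \in \mathcal X$, $\by \in \mathcal Y$, the SGSP updates give
\begin{align*}
\|\bchi^k - \bchi\|^2 &\leq \|\bchi^{k-1} - \bchi\|^2 - 2\tau (\bv_\chi^{k-1})^\top(\bchi^{k-1} - \bchi) + \tau^2 \|\bv_\chi^{k-1}\|^2,\\
\|\by^k - \by\|_{\Theta^{-1}}^2 &\leq \|\by^{k-1} - \by\|_{\Theta^{-1}}^2 - 2(\bv_y^{k-1})^\top(\by^{k-1} - \by) + \|\bv_y^{k-1}\|_\Theta^2.
\end{align*}
Convexity of $\breve L(\cdot,\by)$ and concavity of $\breve L(\bchi,\cdot)$ convert the inner products into the gap terms $\breve L(\bchi^{k-1},\by^{k-1}) - \breve L(\bchi,\by^{k-1})$ and $\breve L(\bchi^{k-1},\by) - \breve L(\bchi^{k-1},\by^{k-1})$. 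Adding, summing over $k \in [N]$, telescoping, dividing by $N$, and invoking Jensen's inequality on the ergodic averages yields \eqref{eq.SGSP.EB.result1} once $\tau, \theta_i, \theta_\bw$ are set proportional to $1/\sqrt{N}$. The uniform subgradient bounds $G_\bchi, G_{\tilde\bu_i}, G_\bw$ follow by inserting the explicit formulas of Lemma~\ref{lem:subdif_def} into the partial derivatives of $\breve L$ and invoking compactness of $X,\tilde U^{i,l},\Omega^{i,l},W$ (the third controlled by Proposition~\ref{proposition:bounded.omega}); in particular the term $(s_i-1)(2+R_{i,s_i})+\sum_l R_{i,l}$ in $G_\bchi$ originates from $\partial_{\bomega_{i,l}}\breve L = \bu_{i,l} - \bu_{i,s_i}$.

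For Stage 2 I would pick $\bchi = \bchi^* = (\bx^*, \bomega^*)$ on the left of \eqref{eq.SGSP.EB.result1}, where $\bomega^*$ is the multiplier vector from \eqref{eq.sp.harder} shown by Proposition~\ref{proposition:bounded.omega} to lie in $\Omega^{i,l}$. Since $(\bchi^*,\by^*)$ is a saddle point of $\breve L$, we get $\breve L(\bchi^*,\bar\by^N) \leq \breve L(\bchi^*,\by^*) = \bc^\top \bx^* = L(\bx^*,(\blambda^*,\bw^*))$, where the middle identity uses primal feasibility $\bba\bx^*=\bb$, complementary slackness, and the consensus $\bu_{i,l}^* = \bu_{i,s_i}^*$ from the construction preceding the proposition. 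On the other side, given $(\blambda,\bw)$ with $\lambda_i \equiv \lambda_{i,s_i}$, construct $\by$ by setting $\bu_{i,l} = (\lambda_i \bz_i^\dagger,\lambda_i)$ for every $l$, where $\bz_i^\dagger \in \argmax_{\bz \in Z^i} g_i(\bar\bx^N,\bz)$. All coupling terms $\bar\bomega_{i,l}^\top(\bu_{i,l}-\bu_{i,s_i})$ then vanish and $\tilde g_i(\bar\bx^N,\bu_{i,s_i}) = \lambda_i f_i(\bar\bx^N)$, so $\breve L(\bar\bchi^N,\by) = L(\bar\bx^N,(\blambda,\bw))$. The factor $\sigma_i = \sum_l (1 + 4R_{i,l}^2)$ comes from $\|\tilde\bu_i - \tilde\bu_i^0\|^2 \leq \sum_l [\|\tilde\bz_{i,l} - \tilde\bz_{i,l}^0\|^2 + |\lambda_i - \lambda_i^0|^2] \leq \sum_l (4R_{i,l}^2 + 1)\max\{\lambda_i,\lambda_i^0\}^2$, while the $4(\sum_i \bar\mu_i(s_i-1)(1+1/\epsilon_i))^2/\tilde\tau$ contribution to $\phi$ absorbs $\|\bomega^* - \bomega^0\|^2$ through the explicit radii of $\Omega^{i,l}$ from Proposition~\ref{proposition:bounded.omega}.

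The main obstacle is that when $\lambda_i > \bar\lambda$ the $\by$ constructed above leaves $\mathcal Y$, so the projection nonexpansiveness underlying \eqref{eq.SGSP.EB.result1} does not apply directly at this comparison point. I would handle this by using the variant of the dual subgradient inequality that leaves the comparison point unrestricted: since $\breve L$ is globally concave in $\by$ and the subgradient bounds $G_{\tilde\bu_i},G_\bw$ depend only on the iterates (which stay in $\mathcal Y$), one can replace $\by$ by $P_{\mathcal Y}(\by)$ in the projection step while retaining the original $\by$ in the subgradient inequality, absorbing the slack into the $\max\{\lambda_i,\lambda_i^0\}$ form on the right-hand side. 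Stage 3 is then essentially free: since $X$ is convex and every iterate $\bx^k$ lies in $X$, the ergodic average $\bar\bx^N$ is in $X$, and the saddle-point inequality for $L$ gives $L(\bar\bx^N,(\blambda^*,\bw^*)) \geq L(\bx^*,(\blambda^*,\bw^*)) = \bc^\top \bx^*$, which is exactly \eqref{eq.SGSP.EB.result3}.
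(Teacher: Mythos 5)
Your proof follows essentially the same route as the paper's: the generic gap bound \eqref{eq.SGSP.EB.result1} comes from the Nedi\'c--Ozdaglar projected-subgradient analysis together with explicit subgradient bounds obtained by inserting Lemma~\ref{lem:subdif_def} into the partials of $\breve{L}$ and using compactness of $X$, $\tilde{U}^{i,l}$, $\Omega^{i,l}$, $W$; the passage to \eqref{eq.SGSP.EB.result2} uses the same comparison points (all $\bu_{i,l}$ set to $(\lambda_i\bz_i^\dag,\lambda_i)$ with $\bz_i^\dag$ a maximizer of $g_i(\bar{\bx}^N,\cdot)$ so that the coupling terms vanish and $\breve{L}$ collapses to $L$, and a primal point $(\bx^*,\bomega)$ with $\bomega$ controlled via Proposition~\ref{proposition:bounded.omega}); and \eqref{eq.SGSP.EB.result3} is the saddle-point inequality, as in the paper. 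The only cosmetic difference is on the lower-bound side: you go directly through $\breve{L}(\bchi^*,\bar{\by}^N)\leq\breve{L}(\bchi^*,\by^*)=\bc^\top\bx^*$, whereas the paper passes through $L(\bx^*,(\bar{\blambda}^N,\bar{\bw}^N))\geq\min_{\bomega\in\Omega}\breve{L}((\bx^*,\bomega),\bar{\by}^N)$ and then maximizes over $\bomega\in\Omega$ on the right-hand side; both yield the same constants. Regarding the obstacle you flag for $\lambda_i>\bar{\lambda}$ or $\bw\notin W$: the paper does not attempt your patch --- its proof explicitly establishes \eqref{eq.SGSP.EB.result2} only for $\lambda_i\in[0,\bar{\lambda}]$ and $\bw\in W$, which is all the projection-based nonexpansiveness supports. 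Your proposed fix (projecting the comparison point onto $\mathcal{Y}$ in the nonexpansiveness step while keeping the unprojected $\by$ in the subgradient inequality) would not telescope to a gap at the original $\by$ without an additional error term, so as written it is not a valid repair; but since the paper itself restricts the range of the dual comparison point, this does not put you at a disadvantage relative to the published argument.
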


	\shimrit{In the case of CP, we proceed similarly to SGSP with one change -- as the CP algorithm presumes bilinearity, we assume $g_i$ are of the form given in \eqref{eq:biaffine.g_i}, we obtain:
		\begin{align}
			\check{L}(\bchi,\by) \equiv & \bx^\top\left( \bc+\sum_{i\in[m]} \tilde{\bbq}_i\bu_{i,s_i}+\bba^\top \bw \right) -\bb^\top \bw + \delta_X(\bx) \nonumber \\
			& +  \sum_{i\in[m]} \left( \tilde{\bq}_i^\top \bu_{i,s_i}+\sum_{l\in [s_i-1]}\bomega_{i,l}^\top(\bu_{i,l}- {\bu}_{i,s_i}) - \sum_{l\in[s_i]}\delta_{U}^{i,l}(\bu_{i,l}) \right)-\bb^\top \bw \label{eq.CP.composite.sp}
		\end{align}
		In this case we can define $\bbb$ as the matrix
		$$\bbb=\begin{bmatrix} \bba^\top &\rvline &\begin{matrix}\bzero &  \rvline & \tilde{\bbq}_1\end{matrix}& \rvline &\begin{matrix}\bzero & \rvline  & \tilde{\bbq}_2\end{matrix} &\rvline & \ldots & \rvline &\begin{matrix} \bzero & \rvline & \tilde{\bbq}_m\end{matrix}\\
			\hline
			\bigzero& \rvline & \begin{matrix} \bigI  & \rvline &\begin{matrix}-\bbi\\ \vdots\\ -\bbi\end{matrix}\end{matrix}&\rvline & \bigzero &\rvline&\ldots & \rvline &\bigzero\\
			\hline
			\bigzero& \rvline & \bigzero &\rvline &\begin{matrix} \bigI  & \rvline &\begin{matrix}-\bbi\\ \vdots\\ -\bbi\end{matrix}\end{matrix} &\rvline&\ldots & \rvline &\bigzero\\
			\hline
			\vdots & \rvline& &  \rvline& &  \rvline &\ddots & \rvline& \vdots\\
			\hline
			\bigzero & \rvline& \bigzero & \rvline& \bigzero &\rvline& \ldots & \rvline&\begin{matrix} \bigI  & \rvline &\begin{matrix}-\bbi\\ \vdots\\ -\bbi\end{matrix}\end{matrix}
		\end{bmatrix},$$
		and apply CP to generalized problem as follows.}
	\begin{center}
		\vspace{10pt}
		\fcolorbox{black}{gray!20}{
			\begin{minipage}{0.8\textwidth}
				\shimrit{\noindent \textbf{CP for $\breve{L}$}\\
					\textbf{Input:} ${\tau>0}$, $\theta \in (0,1/\tau\norm{\bbb}^2) $,  and $N\in\mathbb{N}$\\
					\textbf{Initialization}. Initialize $\bomega^0=\bzero\in \real^{\sum_{i\in[m]}(s_i-1)}$,\; $\bx^0\in X$,  $\bchi^0=(\bx^0,\bomega^0) $, \\$\bar{\chi}^0=(\bar\bx^0,\bar\bomega^0):=\bchi^0$, $\by^0=(\bu^0, \bw^0 ) \in \times_{i\in[m]}\times_{l\in[s_i]}U^{i,l} \times \real^r $. \\
					\textbf{General step:} For $k \in [N]$:
					\begin{align*}
						\bu^{k}_{i,l} &= P_{U_{i, l}} \left(  \bu^{k - 1}_{i,l} + \theta \bar\bomega_{i, l}^{k - 1} \right) && i \in [m],\;l\in[s_i-1]\\
						\bu^{k}_{i,s_i} & = P_{U_{i, s_i}} \left(  \bu^{k - 1}_{i,s_i} + \theta \left( \tilde{\bq}_i - \sum_{l = 1}^{s_i - 1} \bar\bomega_{i, l}^{k - 1} \right) \right) && i \in [m] \\
						\bw^k &= \bw^{k - 1} + \theta (\bba \bar{\bx}^{k - 1} - \bb) \\
						\bx^k &= P_X \left( {\bx}^{k - 1} - \tau \left(\bc + \bba^\top \bw^{k} - \sum\limits_{l = 1}^{s_i - 1} \tilde{\bbq}_i \bu_{i, s_i}^k\right)  \right) \\
						\bomega_{i,l}^{k}  & =  \bomega_{i, l}^{k - 1} - \tau \left( \bu^{k}_{i, l} - \bu^{k}_{i, s_i} \right) && i \in [m],\;l\in[s_i-1]\\
						\bar{\bx}^k&=2\bx^k-\bx^{k-1}\\
						\bar{\bomega}^k&=2\bomega^k-\bomega^{k-1}
				\end{align*}}
			\end{minipage}
		}
		\vspace{10pt}
	\end{center}
	We are now ready to state the convergence of \shimrit{CP} for the general formulation via showing that it is an EB-algorithm.
	\shimrit{
		\begin{proposition} \label{prop.CP.convergence}
			Let Assumptions~\ref{ass:Optimal}-\ref{ass:constraints_bounded_below} hold true. Then, applying the CP algorithm to $\check{L}(\bchi,\by)$ with parameters $\tau, \sigma > 0$ such that $\tau\sigma\norm{\bbb}^2<1$ results in
			\begin{equation}
				\max_{\bchi\in \mathcal{B}_1, \by\in \mathcal{B}_2}\check{L}(\bar{\bchi}^N,\by)-\check{L}(\bchi,\bar{\by}^N) \leq \max_{\bchi\in  \mathcal{B}_1,\by\in \mathcal{B}_2} \frac{1}{2N} \left( \tau^{-1} \| \bchi - \bchi^0 \|^2 + \theta^{-1} \| \by- \by^0 \|^2 \right).\label{eq.CP.EB.result1}
			\end{equation}
			for any compact sets $\mathcal{B}_1$ and $\mathcal{B}_2$.
			Furthermore, CP is an EB algorithm. More precisely, let $\bx^*$ be an optimal solution of \eqref{eq.robust.extended} and let $(\blambda^*,\bw^*)$ be the optimal dual variables associated with its constraints. Then, $\blambda\in\real^m_+$ and $\bw$ such that $\lambda_i\equiv\lambda_{i,s_i}$ for all $i\in [m]$ satisfy
			\begin{align}
				& L(\bar{\bx}^N,(\bw,\blambda))-L({\bx}^*,(\bw^*,\blambda^*) \nonumber \\
				&\leq  \frac{1}{2N} \left(\frac{2\max\{\norm{\bx^*},\norm{\bx^0}^2\}}{\tau} + \frac{1}{\theta} \sum\limits_{i = 1}^m \sigma_i \max\{\lambda_i, \lambda_i^0\}^2 + \frac{2}{\theta} \max\{ \norm{\bw}, \norm{\bw^0} \}^2 + \phi \right), \label{eq.CP.EB.result2}
			\end{align}
			and \begin{equation*}
				L(\bar{\bx}^N; (\blambda^*, \bw^*))\geq \bc^\top\bx^*, 
			\end{equation*} where $\sigma_i=\sum_{l\in[s_i]} (1+4R^2_{i,l}) $, $R_{i,l}$ is the radius of $Z^{i,l}$, $\phi= 4 \sum_{i=1}^m s_i\bar{\mu}_i^2\left( 1+\frac{1}{\epsilon_i} \right)^2 $, with $\bar{\mu}_i$ and $\epsilon_i$ are defined in Assumptions~\ref{ass:constraints_bounded_below} and \ref{ass:zero_in_int_Zi}, respectively.
	\end{proposition}}
	
	\begin{remark}[\textbf{CP: the non-biaffine case}]\label{rem:non.biaffine}
		The CP algorithm can be extends to the case where $g_i(\bx,\bz_i)$ is not bilinear, but rather has the following general form:
		$$
		g_i(\bx,\bz_i)\coloneqq\bh_i(\bx)^\top\bell_i(\bz_i),
		$$
		where $\bh_i(\cdot):X \rightarrow \real^{k_i}$ and $\bell_i(\cdot):Z^i\rightarrow \real^{k_i}$, where each element of the mapping $\bh_i(\cdot)$ is a convex function, \ie $h_{ij}(\cdot)$ is convex for all $j \in [l_i]$, and each element of $\bell_i(\cdot)$ is a concave function, \ie $k_{ij}(\cdot)$ is concave for all $j \in [l_i]$. To maintain the convex-concave structure, we assume that
		$$
		\min_{j\in [k_i]} \min_{\bz_i\in Z^i} \ell_{ij}(\bz_i)\geq 0, \quad \min_{j\in [k_i]}\min_{\bx\in X} h_{ij}(\bx)\geq 0 \quad \forall i\in[m].
		$$
		We will show that we can transform such problems to the biaffine form. For this, we introduce vectors $\bvarpi_{i}$ ($\bzeta_i$ respectively) whose entries upper (lower) bound the entries of $\bell_{i}$ ($\bh_{i}$). With these extra variables, the constraint $g_i(\bx,\bz_i)\leq 0, \;\forall \bz_i\in Z^i$ from problem~\eqref{eq.robust.problem} can be reformulated as
		\begin{align*}
			& \bvarpi_{i}^\top \bzeta_{i} \leq 0,\; \forall (\bz_i,\bzeta_i)\in \Xi^i\\
			& \bh_{i}(\bx)\leq \bvarpi_{i}
		\end{align*}
		where $\Xi^i=\left\{\bxi_i=(\bz_i,\bzeta_i):\bz_i\in Z^i,\;  \bell_{i}(\bz_i)\geq \bzeta_{i}\right\}$. As we see, the first constraint becomes biaffine in the respective variables. 
		To decouple the constraint in $\bx$, one can also duplicate $\bx$ to $\bx_0,\bx_1,\ldots,\bx_m$, and add equality constraints $\bx_i=\bx_0$ for all $i\in[m]$.
		
		Accordingly, we can define an extended primal variable vector $\bchi=(\bx_0,\ldots,\bx_m,\bvarpi_1,\ldots,\bvarpi_m)\in \real^{n+\sum_{i=1}^m l_i}$ with feasible set
		$$
		\mathcal{X}=\left\{\bchi=(\bx_0,\ldots,\bx_m,\bvarpi_1,\ldots,\bvarpi_m):\bx_0\in X,\; \bh_{i}(\bx_i)\leq \bvarpi_{i}\right\},
		$$
		and an extended new uncertain parameter $\bxi_i=(\bz_i,\bzeta_i)$ for constraint $i$ such that $\bxi_i\in \Xi^i$. The $i$-th constraint of problem~\eqref{eq.robust.problem} becomes then:
		$$
		\tilde{g}_i(\bchi,\bxi)=\bchi^\top\bbq_i\bxi_i 
		$$
		where 
		\begin{align*}\bbq_i&=\begin{bmatrix} 
				\bzero_{(n(m+1) + \sum_{j=1}^{i-1}k_j) \times d_i} & \bzero_{(n(m+1) + \sum_{j=1}^{i-1}k_j) \times k_i}\\
				\bzero_{k_i \times d_i} & \bbi_{k_i}\\
				\bzero_{\sum_{j=i+1}^m k_j \times d_i} & \bzero_{\sum_{j=i+1}^m l_j \times k_i}.\\
			\end{bmatrix}
		\end{align*}
		In the end, problem~\eqref{eq.robust.problem} can be shortly written as
		\begin{align}
			\min\limits_{\bchi \in \mathcal{X}} \ &  \tilde{\bc}^\top\bchi \nonumber \\
			\text{s.t.} \ &  \sup\limits_{\bxi_i \in \Xi^i } \tilde{g}_i(\bchi,\bxi_i)\leq 0,\quad i \in [m], \nonumber\\
			&\bba\bchi =0\nonumber
		\end{align}
		where $\tilde{\bc}=(\bc,\bzero)$ and
		$$\bba=\begin{bmatrix}\bbi_{(n)} &-\bbi_{(n)}&\bzero_{(n\times n)} &&\ldots &\bzero_{(n\times n)}& \bzero_{(n\times \sum_{i\in[m]} k_i)}\\
			\bbi_{(n)} &\bzero_{(n\times n)} & -\bbi_{(n)}&\bzero_{(n\times n)} &\ldots &\bzero_{(n\times n)}& \bzero_{(n\times \sum_{i\in[m]}k_i)}\\
			\vdots & \vdots &\ddots &\ddots &&&\vdots\\
			\vdots & \vdots & &\ddots &\ddots&&\vdots\\
			\bbi_{(n)} &\bzero_{(n\times n)}& &\ldots &\bzero_{(n\times n)}&-\bbi_{(n)}& \bzero_{(n\times \sum_{i\in[m]} k_i)}\\
		\end{bmatrix}$$
		Thus, we are back at the biaffine case for the lifted variables $(\bchi,\bxi)$, and CP can be applied as long as the projections over $\Xi^i$ and $\mathcal{X}$ are easily attainable.
	\end{remark}
	
	\section{Numerical experiment: robust quadratic programming} \label{sec:numerics}
	\subsection{Introduction}
	In this section, we compare the numerical performance of our SGSP algorithm to the various approaches of \cite{ho2018online} and the standard cutting-plane algorithm. To do this, we consider an extension of the experiment in \cite{ho2018online}, solving problems
	\begin{align}
		\min_{\bx\in X} \ & \sup\limits_{\bz \in Z} g_0(\bx, \bz) \label{eq:qcqp} \\ 
		\text{s.t.} \ & g_i(\bx, \bz) \leq 0 && \forall \bz \in Z, \ i \in [m] \nonumber
	\end{align}
	where the objective and constraint functions are:
	$$
	g_i(\bx, \bz) = \left\| \left( \bbp_{i0} + \sum\limits_{k = 1}^K \bbp_{ik} z_k \right) \bx \right\|^2 + \bb_i^\top \bx + c_i.
	$$
	with $\bbp_{ik} \in \mathbb{R}^{L \times n}$, $\bb_i \in \mathbb{R}$ and $c_i \in \mathbb{R}$. We assume that $Z=\{\bz\in \real^K:\|\bz \|_2 \leq 1\}$ and $X=\{\bx\in \real^n:\|\bx \|_2 \leq 1\}$. The experiment is an extension of the one performed in \cite{ho2018online}, since the original considers problem with uncertainty only in the objective, while ours captures the more general setting with uncertain constraints.
	
	Note that in \eqref{eq:qcqp}, the functions $g_i(\bx, \cdot)$ are convex, and therefore the problem does not readily fit our framework. However, in the pessimization problem
	\begin{align}
		\sup\limits_{\|\bz \| \leq 1} g_i(\bx, \bz), \label{eq.trsp}
	\end{align}
	known as the \emph{trust region problem}, the function $g_i(\bx,\cdot)$ can be replaced by equivalent concave function, where the equivalence is in the sense of the same maximal value. For this, $g_i$ is first rewritten as
	$$
	g_i(\bx, \bz) = \bz^\top Q_i(\bx) \bz + 2 \br_i(\bx)^\top \bz + s_i(\bx)
	$$
	where $Q_i(\bx) = P_i(\bx)^\top P_i(\bx)$ with $P_i(\bx) \in \mathbb{R}^{n \times K}$ being a matrix whose columns are the vectors $\bbp_{ik} \bx$ for $k \in [K]$, $\br_i(\bx) = P_i(\bx)^\top P_{i0} \bx$ and $s_i(\bx) = \| P_{i0} \bx \|_2^2 - \bb_i^\top \bx - c_i$. For this function, by result of \cite{jeyakumar2014trust} problem \eqref{eq.trsp} can be reformulated as
	\begin{equation} \label{eq:qcqp.concave}
		\sup\limits_{\bz: \|\bz \| \leq 1} g_i(\bx, \cdot) = \sup\limits_{\bz: \|\bz \| \leq 1} \bz^\top (Q_i(\bx) - \lambda_{\text{max}}(Q_i(\bx)) \bbi )\bz + 2 \br_i(\bx)^\top \bz + s_i(\bx) + \lambda_{\text{max}}(Q_i(\bx)) 
	\end{equation}
	where $\lambda_{\max}(\cdot)$ denotes the largest eigenvalue of the argument matrix. Since 
	$$\bar{g}_i(\bx, \bz):=\bz^\top (Q_i(\bx) - \lambda_{\text{max}}(Q_i(\bx)) \bbi )\bz + 2 \br_i(\bx)^\top \bz + s_i(\bx) + \lambda_{\text{max}}(Q_i(\bx)),$$
	is convex-concave, using $\bar{g}_i(\bx, \bz)$ instead of $g_i(\bx,\bz)$ in the robust formulation is equivalent and our setting applies. Therefore, in this set of experiments, whenever using first-order methods, we solve the modified problem:
	\begin{align}
		\min_{\bx\in X} \ & \sup\limits_{\bz \in Z} \bar{g}_0(\bx, \bz) \label{eq:qcqp.transformed} \\ 
		\text{s.t.} \ & \bar{g}_i(\bx, \bz) \leq 0 && \forall \bz \in Z, \ i \in [m] \nonumber
	\end{align}
	We note that with respect $\bx$, problem \eqref{eq:qcqp.transformed} is semidefinite optimization problem. We are ready to state the four algorithms we compare:
	\begin{itemize}[\textbullet ]
		\item {\bf Cutting planes algorithm} applied directly to \eqref{eq:qcqp}, where pessimization subproblem \eqref{eq.trsp} is solved using a generic solver
		to find $\bz$ violating the constraints.
		\item {\bf The SGSP algorithm} applied to \eqref{eq:qcqp.transformed}. As part of this method, we first apply SGSP to find a Slater point for \eqref{eq:qcqp.transformed}, and given the obtained Slater point we run SGSP to solve problem \eqref{eq:qcqp.transformed}.
		\item {\bf The OCO algorithm} of \cite{ho2018online}, applied to \eqref{eq:qcqp.transformed}, where both the variables $\bx$ and $\bz$ are solved using online gradient descent (OGD).
		\item {\bf The FO-pessimization approach} of \cite{ho2018online} applied to \eqref{eq:qcqp.transformed}, where the primal problem is solved using OGD and for each constraint the worst-case $\bz$ is found by solving \eqref{eq:qcqp.concave} using a generic solver.
	\end{itemize}
	We will compare the algorithms on their speed of reducing the feasibility and optimality gaps in the sense of Theorem~\ref{theorem.sgsp.simple.convergence}. In the following Section, we describe the exact numerical setup and the results.
	
	\begin{remark}
		In the presented experiments we do not compare to the `nominal` approach suggested in \cite{bental2015oracle}.
		Indeed, in the implementation of \cite{bental2015oracle} presented in \cite{ho2018online}, the authors suggests that in the $k$-th iteration, the following problem will be solved:
		\begin{align*}
			\min_\bx \ & g_0(\bx, \bz^k_0) \\ 
			\text{s.t.} \ & g_i(\bx, \bz^k_i) \leq 0 &&  \ i \in [m].
		\end{align*}
		However, convergence of such a method requires for there to be a saddle point 
		$$
		\inf_{\bx\in X} \sup\limits_{\bz_i \in Z} g_i(\bx, \bz_i) = \sup\limits_{\bz_i \in Z} \inf_{\bx\in X} g_i(\bx, \bz_i),
		$$
		which does not necessarily exist due to the convexity of $g_i(\bx, \cdot)$. Thus, to solve the \eqref{eq:qcqp} problem using some kind of a `nominal' approach with updated $\bz^k_i$, one would need to either (i) use the semidefinite program \eqref{eq:qcqp.transformed} as the nominal oracle which contradicts the idea of solving `simple' problems per iteration, or (ii) use the dual-subgradient meta-algorithm of \cite{bental2015oracle} where $\bz_i$ is lifted to a semidefinite matrix and the original nominal oracle \eqref{eq:qcqp} is used w.r.t. $\bx$ -- however, running the dual step would require projections on the spectahedron which, again, contradicts the idea of solving `simple' problems, or (iii) use the dual-perturbation meta-algorithm of \cite{bental2015oracle} -- however, as we focus on deterministic algorithms, we do not include an implementation of \cite{bental2015oracle}. 
	\end{remark}
	\subsection{Experiment setting}
	We explored different problem sizes, with respect to $n$ the dimension of $\bx$, $m$ the number of constraints, $K$ the dimension of uncertainty $\bz_i$, and $\ell$ the dimension of the vector in the norm. For each problem size, we sampled the problem data for 50 problem instances as follows. First, each entry of $\bbp_{ik}$ and $\bb_i$ is sampled uniformly from interval $[-1, 1]$. Fixed value $c_i = -0.05$ is chosen deterministically to ensure Slater feasibility of the problem. Next, $\bbp_{ik}$ and $\bb_i$ are normalized as
	\begin{align*}
		\bbp_{ik} & = \frac{\bbp_{ik}}{S_{i1}} && \text{ where } S_{i1} = \left\| \left[ \bbp_{i0}^\top \ \cdots \ \bbp_{iK}^\top \right]^\top \right\|_{2,2} \\
		\bb_i & = \frac{\bb_i}{S_{i2}} && \text{ where } S_{i2} = \| \bb_i \|_2
	\end{align*}
	To compare the algorithms in a fair way, we will use the same starting point for all of them. This point will be the optimal solution to the nominal problem 
	\begin{align*}
		\min_\bx \ & g_0(\bx, \bzero) \\ 
		\text{s.t.} \ & g_i(\bx, \bzero) \leq 0 && \ i \in [m].
	\end{align*}
	The values $\bz_i$ are initialized as the zero vectors. We consider an $\epsilon$ tolerance of $0.001$ for both feasibility and optimality.
	
	Note that the first-order algorithms require a choice of step-size. The step-size for OGD is chosen to be ${2}/({\norm{\nabla_\bx g_i(\bx,\bz_i)}\sqrt{k}})$ and ${2}/({\norm{\nabla_{\bz_i} g_i(\bx,\bz_i)}\sqrt{k}})$ for the primal and dual steps at iteration $k$, respectively. These stepsizes correspond with the analysis of OGD given in  \cite[Theorem 3.4]{Hazan2016}. Similarly, the step sizes for SGSP is given by  $\tau^k={2}/({\norm{\nabla_{\bx} L(\bx,\bu)}\sqrt{k}})$ and $\theta^k_i={2}/({\norm{\nabla_{\bu_i} L(\bx,\bu)}\sqrt{k}})$ for the primal and dual steps at iteration $k$, respectively \footnote{We note that although the analysis of \cite{nedic2009subgradient} was done for a constant step-size, a similar analysis to the one shown in \cite[Theorem 3.4]{Hazan2016} can be done for SGSP, with similar  theoretical results.}. In both cases, the average solution $\bar{\bx}^N$ is computed as a weighted sum of the iterates $\bx^k$ with weights corresponding to the step-sizes. We note that these step-sizes were chosen instead of constant step-sizes, since they produced better results for all methods while retaining theoretical convergence guarantees.
	
	Table~\ref{tbl:sizes} describes the different settings in which the algorithms were run. We consider small, medium and large problem sizes, with either no constraints or three constraints for each. For each problem size we specify the time limit we gave the methods as well as the sampling frequency for the output (see explanation below).
	

\begin{table}
	\TABLE{Sizes of tested problems.\label{tbl:sizes}}{
		\begin{tabular}{lllllll}
			\hline
			Name&$n$& $K$& $L$ & $m$ &maximum allowed time & Output frequency \\
			&&&&&(seconds) & ($\tilde{N}$ iterations)\\
			\hline\hline
			Small&10 & 10& 10 &0 &600 &100  $^\dag$\\
			&   & &  &3     &&\\
			Medium &600 & 25 & 15 &0 &1200& 100\\
			& & &  &3 & &\\
			Large &3600 & 30 & 16 & 0 &3600& 100\\
			& & && 3 & &\\
			\hline
	\end{tabular}}{$^\dag$ Cutting planes method records every iteration.}
\end{table}

In order to measure the feasibility and the optimality gap for each method, for each parameter realization, we first define a constant $\tilde{N}$, such that every $\tilde{N}$ iterations statistics on the solution are gathered.  Specifically, for all $k\in \mathbb{N}$, let $\bx^{k\tilde{N}}$ be the solution obtained after iteration $k\tilde{N}$ of the algorithm, and let $T_k$ be the time it took to run these $k\tilde{N}$ iterations. The feasibility gap at iteration  $k\tilde{N}$ is given by
$$
\text{FG}_k:=\max_{i\in[m]} \max_{\bz_i\in Z} g_i(\bx^{k\tilde{N}},\bz_i).
$$
Defining the optimality gap to be infinity if the feasibility gap is larger than the defined $\epsilon$, the optimality gap ratio at iteration  $k\tilde{N}$ is given by
$$
\text{OGR}_k:=\frac{\delta_{\{p:p\leq \epsilon\}}(\text{FG}_k)\max_{\bz_i\in Z} g_0(\bx^{k\tilde{N}},\bz_i)-\text{LB}}{\text{LB}}.
$$
In the above formula, $\delta_{\{p:p\leq \epsilon\}}$ is the indicator function, and LB is the lower bound on the optimal solution obtained at the end of the cutting planes algorithm, by only considering the cuts added during the algorithm. Thus, for each time $t\geq 0$ we can record the minimal feasibility gap up to time $t$ as
$$
\min_{k:T_k\leq t} \text{FG}_k,
$$
and the minimal optimality gap ratio up to time $t$ as 
$$
\min_{k:T_k\leq t} \text{OGR}_k.
$$
All the code, available in the online repository \citep{our_repo}, was run using Python 3.7, with the CasADi package \citep{Andersson2019} as the optimization interface. The optimization problems were solved using Gurobi 9.0.0 \citep{gurobi}, with the exception of the trust-region subproblem \eqref{eq.trsp} that due to numerical difficulties was solved using the IPOPT solver \citep{wachter2006implementation}. The code was run on a PowerEdge R740xd server with two Intel Xeon Cold 6254 3.1GHz processors, each with 18 cores, 
and a total	RAM of 384GB. 

\subsection{Results}
In Figure~\ref{fig.opt_gap_noconstraints}, we show the optimality convergence for all methods for the small, medium, and large instances without constraints. In the small instances, the computational and memory requirements of the cutting planes algorithm are negligible, and its performance dominates the other methods. Among the first order methods, however, the SGSP algorithm attains the fastest convergence. 

When the problem instances become larger, the memory and computational requirements of the cutting planes algorithm become more significant, making its performance similar (medium instances) and then worse than the performance of the first-order methods. Among the first-order methods, our SGSP algorithm consistently dominates the other methods, although the differences with the OCO are rather small. Interestingly, we observe that for all methods it is the medium instances that keep their optimality gaps rather large for the longest. This might be related to the way we sample the problems where up to a certain point the problem size growth effect outweighs the `averaging out' effect of the large matrices that make it easier to find a high-quality solution.
\begin{figure} 
	\centering
	\includegraphics[width=\textwidth]{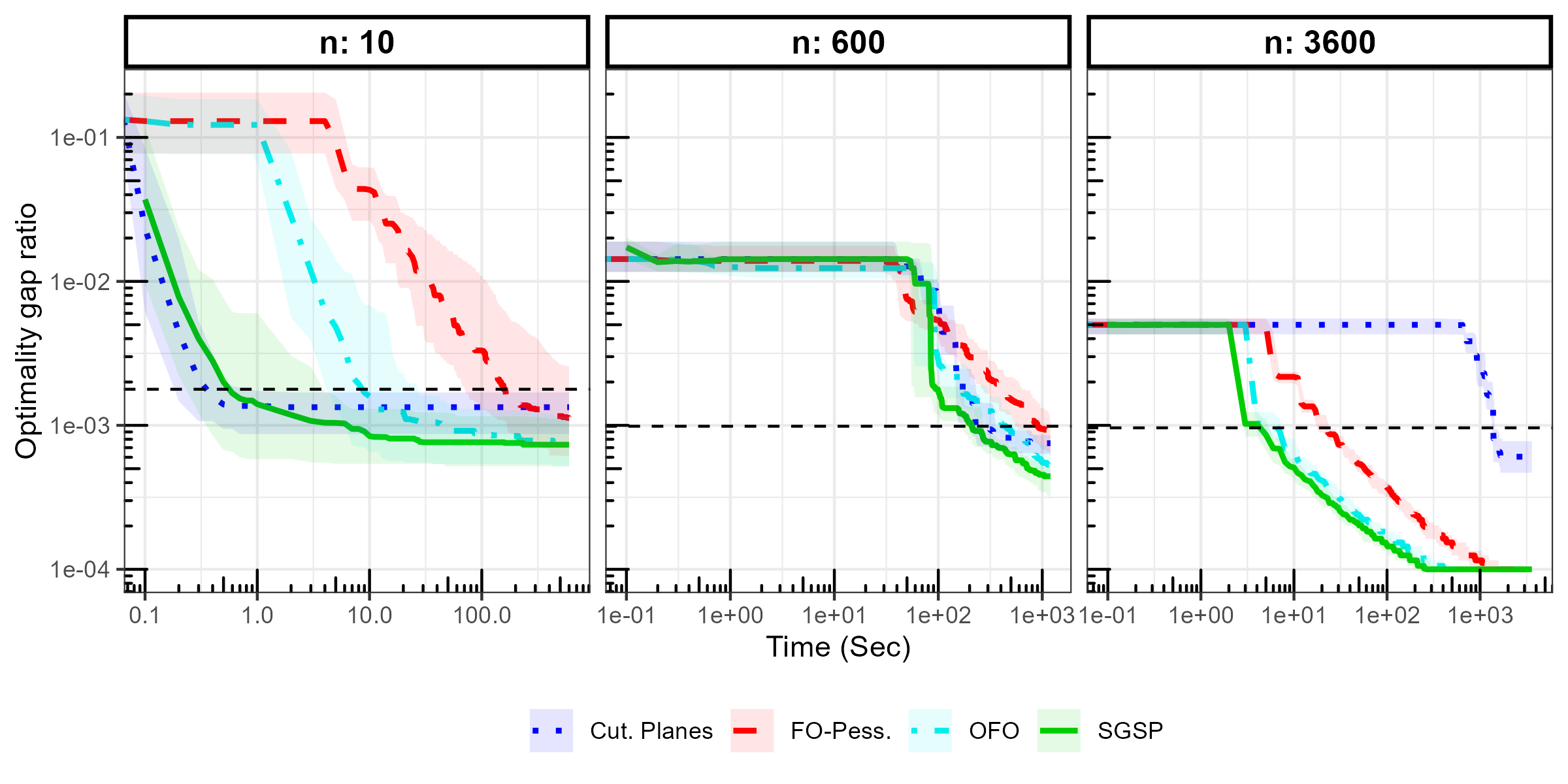}
	\caption{Optimality gap for all problem sizes with $m = 0$.} \label{fig.opt_gap_noconstraints}
\end{figure}

We now proceed to discuss the constrained problems. In Figures~\ref{all_graphs_n10_m4}-\ref{all_graphs_n3600_m4_k30_l16} we show the results for the small, medium, and large instances, respectively. We start with discussing the small examples. Similarly to the unconstrained case, the cutting planes algorithm is the fastest of all. Among the first-order methods, OFO is better than SGSP at finding feasible solutions fast, but having found them, it gets `stuck' on improving optimality due to the need to run binary search on the objective value. Specifically, the problem is with the bi-section iterations which do not have a feasible solution, but this infeasibility can only be identified after performing a very large number of iterations. The FO-Pess method is the slowest across all three measures.

\begin{figure} 
	\centering
	\includegraphics[width=\textwidth]{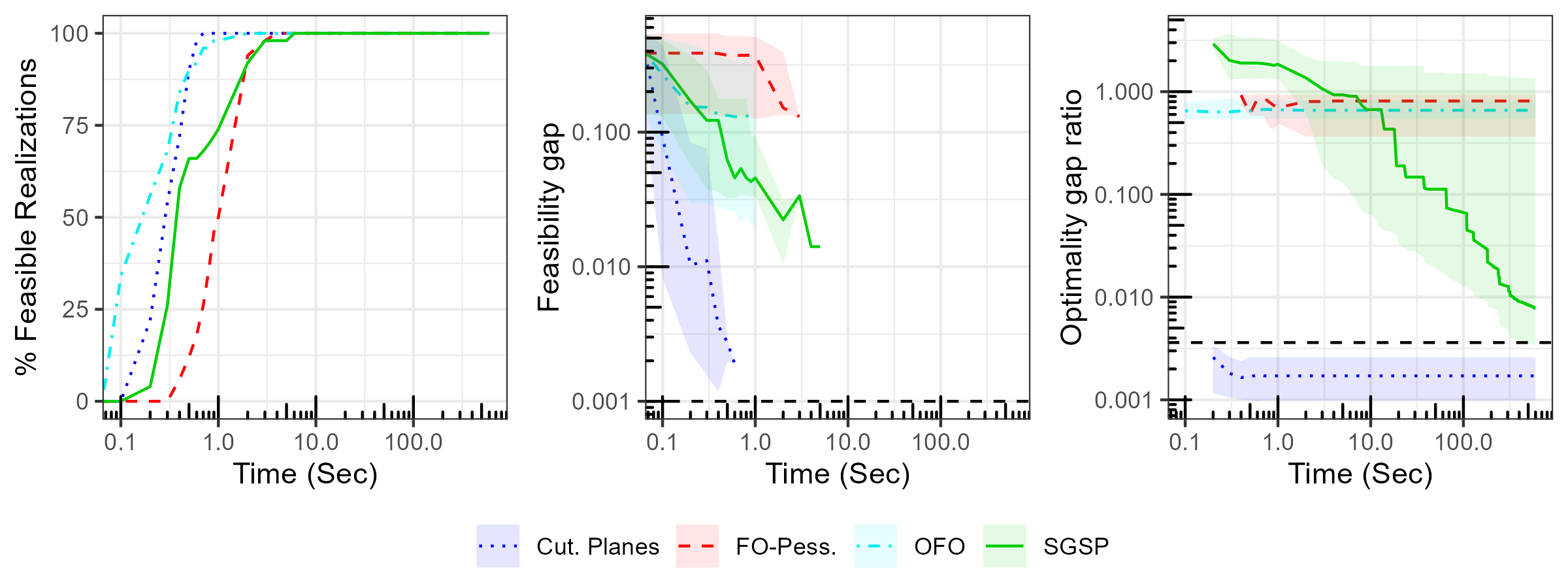}
	\caption{Small instances, $m = 3$. Percentage of instances with a feasible solution found, feasibility gap among infeasible instances, and optimality gap among the feasible instances.} \label{all_graphs_n10_m4}
\end{figure}

For the medium instances in Figure~\ref{all_graphs_n600_m4}, we would expect the first order methods to already perform better than the cutting planes algorithm. Indeed, the cutting planes algorithm becomes worse in decreasing the feasibility gap and the number of infeasible instances compared to all first-order methods. Among the feasible first-order methods, again, it is the SGSP algorithm that manages to reach optimality guarantees within the prescribed amount of time which are equivalent to those of the cutting planes, while OFO and FO-Pess do get stuck due to the need for running the binary search on the objective function value.

\begin{figure} 
	\centering
	\includegraphics[width=\textwidth]{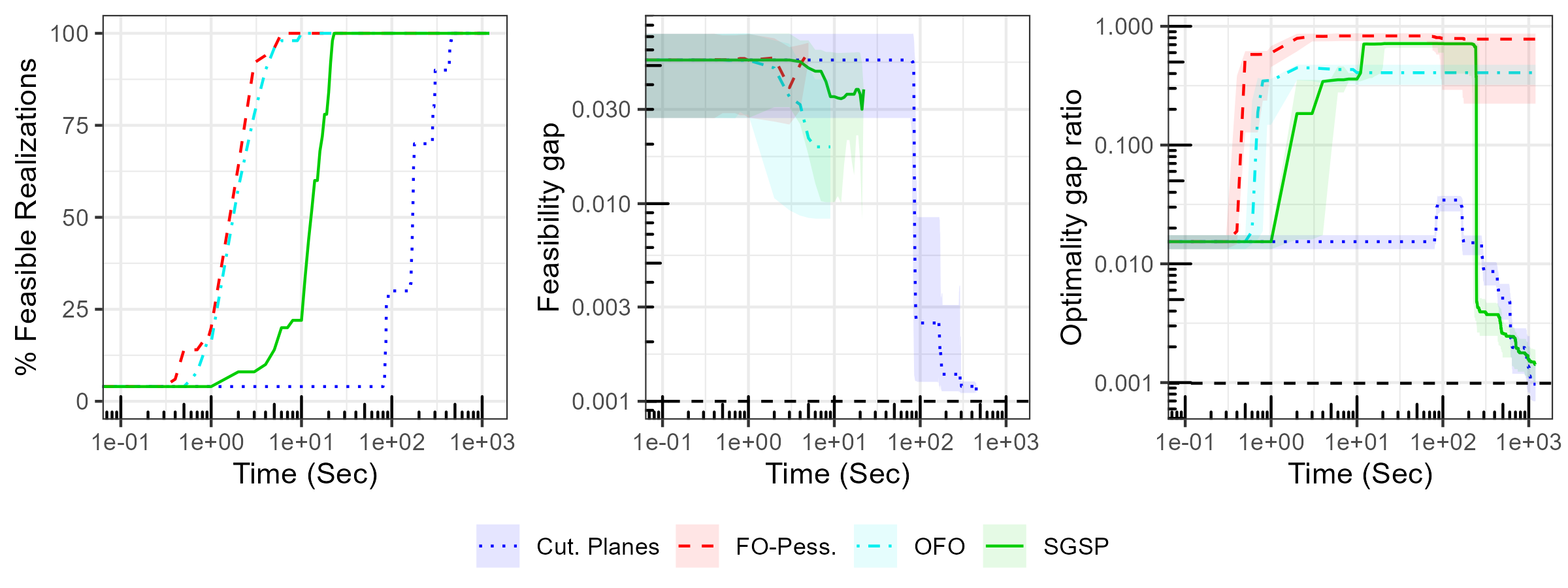}
	\caption{Medium instances, $m = 3$. Percentage of instances with a feasible solution found, feasibility gap among infeasible instances, and optimality gap among the feasible instances.} \label{all_graphs_n600_m4}
\end{figure}

Finally, for the largest instances in Figure~\ref{all_graphs_n3600_m4_k30_l16}, 
predictably, the cutting planes does not only have inferior performance to all first order based methods.
We therefore focus on the comparison of computational performance of the three first-order methods. We observe that all first order methods are able to find feasible solutions faster than in the medium instances. We again believe that this is due to the problem generation process. Among the first-order methods, SGSP is relatively the slowest one to find a feasible solution for all the instances. With respect to the optimality gap, we observe that OFO slightly dominates the SGSP. The SGSPs performance on these instances is affected by the fact that at each iteration, it requires gradient computations with respect to all the constraints, whereas for the OFO this is done only for the constraint with largest current value of the left-hand side. Since for large dimensional problems these computations are substantial, the SGSP performs slightly slower than OFO, while dominating over the FO-Pess method.

\begin{figure} 
	\centering
	\includegraphics[width=\textwidth]{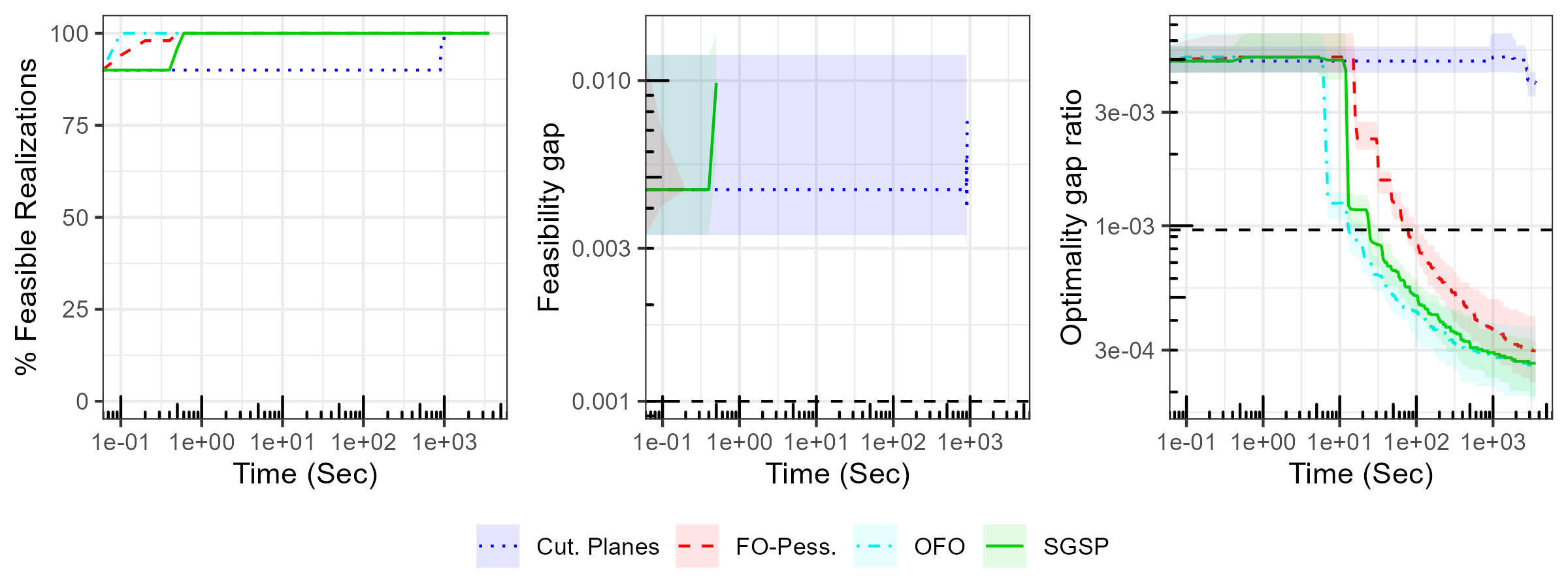}
	\caption{Large instances, $m = 3$. Percentage of instances with a feasible solution found, feasibility gap among infeasible instances, and optimality gap among the feasible instances.} \label{all_graphs_n3600_m4_k30_l16}
\end{figure}

\section{Conclusions} \label{sec:conclusions}
In this paper, we have proposed a first-order optimization approach to robust optimization problems based on a convex-concave saddle-point reformulation of the problem's Lagrangian. Our approach recovers the $\mathcal{O}(1/ \epsilon^2)$ convergence rate for general problems considered also by \cite{bental2015oracle,ho2018online}, and offers an improved $\mathcal{O}(1/ \epsilon)$ convergence guarantee for problems with a biaffine function structure. Similar to those algorithms, our method allows for a convenient parallelization of the computations related to different constraint functions and avoids problem size increase typical for the cutting planes and robust counterparts approaches. At the same time, our approach has the numerical benefit of avoiding a binary-search procedure for the optimal value of the objective as in \cite{ho2018online}, while providing a deterministic algorithm which does not have to solve the nominal problem, contrary to \cite{bental2015oracle}.

\section*{Acknowledgements} 
We thank Fatma K{\i}l{\i}n\c{c}-Karzan and Nam Ho-Nguyen for their discussions and sharing with us the implementation of their experiments. We also thank Shoham Sabach for his work at the early stage of this work and valuable suggestions.

\begin{APPENDIX}{}
	
	\section{Proofs} \label{appendix.proofs}
	\begin{proof}{Proof of Proposition~\ref{prop:eq_saddle_points}.}
		\textbf{Proof of $\Rightarrow$.} We first note that from the definition of $f_i$ and the construction of the lifted uncertainty set $U^i$ we have that for any $\blambda\in\real^m_+$ and ${\bx}\in X$ and $i\in[m]$. 
		\begin{align*}
			{\lambda}_i f_i({\bx}) &=  {\lambda}_i \sup\limits_{\bz_i\in Z^i} g_i(\bx,\bz_i) = \sup\limits_{\tilde{\bz}_i:(\tilde{\bz}_i,{\lambda}_i)\in U^i} \lambda_i g_i\left(\bx,\frac{\tilde{\bz}_i}{\lambda_i}\right).
		\end{align*}
		Indeed, if $(\bx^*,(\bu^*,\bw^*))$ is a saddle point of $\bar{L}$ then, defining $\bz_i^*= \tilde{\bz}^*_i / \lambda_i^*$ if $\lambda^*>0$ and $\bz_i^*=0$ otherwise, gives: 
		\begin{equation}
			L(\bx^*,(\blambda^*, \bw^*))=\bar{L}(\bx^*,(\bu^*, \bw^*)),
			\label{eq:same_value}
		\end{equation}
		and 
		\begin{align}
			& \bar{L}(\bx^*,(\bu^*, \bw^*)) \nonumber \\
			=&  \sup_{\bu\in U, \bw \in \mathbb{R}^r} \bar{L}(\bx^*,(\bu, \bw)) \nonumber \\
			=& \sup_{\blambda\in\real^m_+,\; \tilde{\bz}_i\in \lambda_iZ^i,\;i \in [m], \bw \in \mathbb{R}^r } \bc^\top{\bx}^* + \sum\limits_{i=1}^m  \lambda_i g_i\left(\bx^*,\frac{\tilde{\bz}_i}{\lambda_i}\right) + \bw^\top (\bba \bx^* - \bb)  \nonumber \\
			=& \sup_{\blambda\in\real^m_+, \bw \in \mathbb{R}^r} \bc^\top{\bx}^* + \sum\limits_{i=1}^m  \lambda_i\sup\limits_{\bz_i\in Z^i} g_i(\bx^*,\bz_i)  + \bw^\top (\bba \bx^* - \bb)  \nonumber\\
			=& \sup_{\blambda\in\real^m_+, \bw \in \mathbb{R}^r} \bc^\top{\bx}^*  + \sum_{i=1}^m {\lambda}_i f_i({\bx}^*)  + \bw^\top (\bba \bx^* - \bb) \nonumber \\
			=& \sup_{\blambda\in\real^m_+, \bw \in \mathbb{R}^r} L(\bx^*,(\blambda, \bw)), \label{eq:proof1eq1}
		\end{align}
		where the subsequent equalities follow from (i) the definition of $\bar{L}$, (ii) definition of $U^i$, (iii) the definition of $f_i$, and (iv) the definition of $L$. Moreover,
		\begin{align}
			\bar{L}(\bx^*,(\bu^*, \bw^*)) 
			=& \inf_{\bx\in X} \bar{L}(\bx,(\bu^*, \bw^*)) \nonumber \\
			=& \inf_{\bx\in X}\bc^\top\bx + \lambda^*_i g_i\left(\bx,\frac{\tilde{\bz}^*_i}{\lambda^*_i}\right) + \bw^{*\top} (\bba \bx - \bb) \nonumber \\
			\leq & \inf_{\bx\in X}\bc^\top\bx + \sum\limits_{i=1}^m\sup_{\tilde{\bz}_i:(\tilde{\bz}_i,\lambda^*_i)\in U^i}\lambda^*_i g_i\left(\bx,\frac{\tilde{\bz}_i}{\lambda^*_i}\right) + \bw^{*\top} (\bba \bx - \bb) \nonumber \\
			=& \inf_{\bx\in X}\bc^\top\bx+ \sum\limits_{i=1}^m\sup_{\bz_i\in Z^i}\lambda^*_i g_i(\bx,\bz_i)  + \bw^{*\top} (\bba \bx - \bb) \nonumber \\
			=& \inf_{\bx\in X} L(\bx,(\blambda^*, \bw^*))\label{eq:proof1eq7},
		\end{align}
		where the subsequent steps follow from (i) the definition of $\bar{L}$, (ii) the definition of $U^i$, and (iii) the definition of $L$. Combining \eqref{eq:same_value}, \eqref{eq:proof1eq1} and \eqref{eq:proof1eq7} we obtain that
		\begin{align*}
			\inf_{\bx\in X} \bar{L}(\bx,(\bu^*, \bw^*)) = & \inf_{\bx\in X} L(\bx,(\blambda^*, \bw^*)) \\
			= & L(\bx^*,(\blambda^*, \bw^*)) \\
			= & \bar{L}(\bx^*,(\bu^*, \bw^*))\\
			= & \sup_{\bu\in U, \bw \in \mathbb{R}^r} \bar{L}(\bx^*,(\bu, \bw)) \\
			= & \sup_{\blambda\in\real^m_+, \bw \in \mathbb{R}^r} L(\bx^*,(\blambda, \bw)),
		\end{align*}
		\emph{i.e.}, $(\bx^*,(\blambda^*, \bw^*))$ is a saddle point of $L$.
		
		\textbf{Proof of $\Leftarrow$.}
		We shall show that $(\bx^*, (\blambda^*, \bw^*))$ can be extended to a saddle point of the lifted Lagrangian $\bar{L}$. Note that defining $\tilde{\bz}_i=\lambda_i\bz_i$ we have that
		\begin{align*}
			\bc^\top \bx^* + \sum_{i = 1}^m \lambda_i^* f_i \left( \bx^*\right) + \bw^{*\top}(\bba \bx^* - \bb)
			& = \inf_{\bx \in X}  \bc^\top \bx + \sum_{i = 1}^m \lambda_i^* \max \limits_{(\tilde{\bz}_i, \lambda_i^*) \in U^i} g_i \left( \bx, \frac{\tilde{\bz}_i}{\lambda_i^*} \right) + \bw^{*\top}(\bba \bx - \bb) \\
			& = \max\limits_{\substack{(\tilde{\bz}_i, \lambda_i^*) \in U^i,\;i\in[m]}} \inf_{\bx \in X}  \bc^\top \bx + \sum_{i = 1}^m \lambda_i^* g_i \left( \bx, \frac{\tilde{\bz}_i}{\lambda_i^*} \right) + \bw^{*\top}(\bba \bx - \bb) \\
			& = \inf_{\bx \in X}  \bc^\top \bx + \sum_{i = 1}^m \lambda_i^* g_i \left( \bx, \frac{\hat{\bz}_i}{\lambda_i^*} \right) + \bw^{*\top}(\bba \bx - \bb).
		\end{align*}
		The first equality is due to $(\bx^*,(\blambda^*, \bw^*))$ being a saddle point of $L$ and the definition of $f_i$. The third equality follows from Sion's theorem, applicable due to boundedness of $\{ (\tilde{\bz}, \lambda_i) \in U^i: \ \lambda_i = \lambda_i^*\}$ and where we define $\hat{\bz}_i$ as a (necessarily-existing) maximizer:
		$$
		(\hat{\bz}_i)_{i \in[m]} \in \arg\max_{\tilde{\bz}_i:(\tilde{\bz}_i, \lambda_i^*) \in U^i} \left\{ \inf_{\bx \in X}  \bc^\top \bx + \sum_{i = 1}^m \lambda_i^* g_i \left( \bx, \frac{\tilde{\bz}_i}{\lambda_i^*} \right) + \bw^{*\top}(\bba \bx - \bb) \right\}.
		$$
		Thus, defining $\bu^*_i=(\hat{\bz}^*_i,\lambda^*_i)$ we have that
		\begin{equation*}
			L(\bx^*,(\blambda^*, \bw^*))=\bar{L}(\bx^*, (\bu^*, \bw^*))=\inf_{\bx \in X}  \bc^\top \bx + \sum_{i = 1}^m \lambda_i^* g_i \left( \bx, \frac{\hat{\bz}_i^*}{\lambda_i^*}\right) + \bw^{*\top}(\bba \bx - \bb) =\inf_{\bx\in X} \bar{L}(\bx,(\bu^*, \bw^*))
		\end{equation*}
		Moreover,
		\begin{align*}
			\bar{L}(\bx^*,(\bu^*, \bw^*))=L(\bx^*,(\blambda^*, \bw^*))&=\bc^\top \bx^* + \sum_{i = 1}^m \lambda_i^* f_i(\bx^*) + \bw^{*\top}(\bba \bx^* - \bb) \\
			& = \bc^\top \bx^* + \sup\limits_{\blambda \geq \real^m_+, \bw\in \real^r}\sum_{i = 1}^m \lambda_i \sup\limits_{\bz_i \in Z^i} g_i(\bx^*, \bz) + \bw^{\top}(\bba \bx^* - \bb) \\
			&=  \bc^\top \bx^* + \sup\limits_{\bu_i  = (\lambda_i, \tilde{\bz}_i) \in U^i,\;i\in[m], \bw} \sum_{i = 1}^m \lambda_i g_i \left( \bx^*, \frac{\tilde{\bz}_i}{\lambda_i} \right) + \bw^{\top}(\bba \bx^* - \bb) \\
			&=\max_{\bu_i\in U^i,\;i\in [m], \bw\in \real^r}\bar{L}(\bx^*,(\bu, \bw)),
		\end{align*}
		where we used the fact that $(\bx^*, (\blambda^*, \bw^*))$ is a saddle point of the original Lagrangian $L$, and the definition of $f_i$, $L$, and $\bar{L}$. Thus, we showed that $(\bx^*, (\bu^*, \bw^*))$ is a saddle point of $\bar{L}$. \hfill \qedsymbol
	\end{proof}

	\begin{proof}{Proof of Lemma~\ref{lem:subdif_def}.}
		\begin{itemize}
			\item[(i)]  Applying \cite[Lemma 2.3]{combettes2018perspective} and using the fact that for any $\bu_i=(\tilde{\bz}_i,\lambda_i)\in U^i$ we have that either both $\lambda_i=0$ and $\tilde{\bz}_i=0$, or $\bz_i=\tilde{\bz}_i/\lambda_i\in Z^i$.
			\item[(ii)] We use the fact that $\bz_i=\tilde{\bz}_i/\lambda_i$ for all $i$ such that $\lambda_i>0$, and define
			$$
			\bd_x\coloneqq\sum_{i=1}^m \lambda_i\tilde{\bd}_{\bx,i}=\sum_{i:\lambda_i>0} \lambda_i\tilde{\bd}_{\bx,i}\in \sum_{i:\lambda_i>0} \lambda_i\partial_\bx g_{i}(\bx,{\bz}_i)=\sum_{i:\lambda_i>0} \lambda_i\partial_\bx g_{i}\left(\bx,\frac{\tilde{\bz}_i}{\lambda_i}\right).
			$$ 
			Thus, we see that \eqref{eq:d_x} holds, i.e., $\bv_x\in \partial_\bx\bar{L}(\bx,\bu)$. For all $i$ such that $\lambda_i>0$ define 
			$$
			\bd_i\coloneqq \tilde{\bd}_{\bz,i}\in \partial_{\bz_i } \left(-g_i(\bx,\bz_i)\right)=\partial_{\bz_i } \left(-g_i\left(\bx,\frac{\tilde{\bz}_i}{\lambda_i}\right)\right).
			$$
			For such $\bd_i$ we  have that
			$$
			\bv_i=(\bd_{\bz,i},-g_i(\bx,\bz_i)-\bz_i^\top\bd_{\bz,i})=\left(\bd_i,-g_{i}\left(\bx,\frac{\tilde{\bz}_i}{\lambda_i}\right)-\frac{\tilde{\bz}_i^\top\bd_i}{\lambda_i}\right)\in\partial_{\bu_i} \left(-\bar{L}(\bx,\bu)\right).
			$$
			i.e, the first case of \eqref{eq:d_z} holds. 
			
			Finally, for all $i$ such that $\lambda_i=0$, using the fact that $\bz_i=\bzero$ define
			$$
			\bd_i\coloneqq\bd_{\bz,i}\in \partial_{\bz_i } \left(-g_i(\bx,\bzero)\right)\subseteq\cup_{\bz_i\in Z^i}\partial_{\bz_i} \left(-g_{i}\left(\bx,\bz_i\right)\right) + T_{Z^i}(\bz_i)^*,
			$$
			where the tangent cone $T_{Z^i}(\bz_i)^*$ always includes the zero vector. Moreover, defining $\phi_i\coloneqq-g_i(\bx,\bzero)$ we have that
			\begin{align*}
				(-g_i)^*(\bx,\bd_i)&=\sup_{\bzeta_i} \{\bzeta_i^\top\bd_i+g(\bx,\bzeta_i)\}\\
				&=\sup_{\bzeta_i} \{\bd_i^\top(\bzeta_i-\bzero)-g_i(\bx,\bzero)+g_i(\bx,\bzeta_i)\}+g_i(\bx,\bzero)\\
				&\leq g_i(\bx,\bzero)=-\phi_i\\
			\end{align*}
			where the first equality follows from the definition of convex conjugate, and the inequality follows from the convexity of $-g_i(\bx,\cdot)$ and the fact that $\bd_i\in \partial_{\bz_i } \left(-g_i(\bx,\bzero)\right)$. Thus, meeting the definition \eqref{eq:d_z} we have that for the case where $\lambda_i=0$ we obtain that $\bv_i=(\bd_i,\phi_i)\in \partial_{\bu_i} \left(-\bar{L}(\bx,\bu)\right)$. \hfill \qedsymbol
		\end{itemize}
	\end{proof}
	
	\begin{proof}{Proof of Proposition~\ref{prop:proj_firstformUi}.}
		The projection over set $U^i$ is given by computing the minimizer of the following optimization problem
		\begin{equation}\min\left\{\frac{1}{2}\norm{\bu_i-\bv}^2: \bv\in U^i\right\}=\min\left\{\frac{1}{2}\norm{\tilde{\bz}_i-\bzeta}^2+\frac{1}{2}(\lambda_i-\mu)^2: \bzeta\in \mu Z^i,\;\mu\geq 0\right\}
			\label{eq:proj_problem}\end{equation}
		
		It is clear that if $\bu_i\in U^i$ its projection onto $U^i$ is the vector $\bu_i$ itself. Otherwise, we can rewrite the projection problem as follows
		\begin{equation*}\min\left\{\norm{\tilde{\bz}_i-\bzeta}^2+(\lambda_i-\mu)^2: \bzeta\in \mu Z^i,\;\mu\geq 0\right\}=\min\left\{\norm{\tilde{\bz}_i-\mu\bz_i}^2+(\lambda_i-\mu)^2:\bz_i\in Z^i,\;\mu\geq 0\right\}.\end{equation*}
		Computing this minimum first over $\bz_i\in Z^i$ we obtain that if $\mu>0$ then
		$$\argmin\left\{\norm{\tilde{\bz}_i-\mu\bz_i}^2:\bz_i\in Z^i\right\}=\argmin\left\{\norm{\frac{\tilde{\bz}_i}{\mu}-\bz_i}^2:\bz_i\in Z^i\right\}=P_{Z^i}\left(\frac{\tilde{\bz}_i}{\mu}\right)$$
		in which case the optimal ${\bzeta}$ is given in by $\mu P_{Z^i}\left(\frac{\tilde{\bz}_i}{\mu}\right)$.
		Otherwise, if $\mu=0$, then all points $\bz_i\in Z^i$ are optimal and the optimal $\bzeta$ is $\bzero$.
		Moreover, 
		\begin{equation*}\min\{\norm{\bu_i-\bv}^2: \bv\in U^i\}
			=\min\left\{\inf_{\mu> 0} {\psi}_i(\mu),\norm{\bu_i}^2\right\}	 \end{equation*}
		since 
		\begin{align*}
			{\psi}_i(\mu) & =\inf_{\bzeta\in\mu Z^i}\frac{1}{2}\norm{\tilde{\bz}_i-\bzeta}^2+\frac{1}{2}(\lambda_i-\mu)^2 \\ 
			& = \frac{\mu^2}{2}\norm{\frac{\tilde{\bz}_i}{\mu}-P_{Z^i}\left(\frac{\tilde{\bz}_i}{\mu}\right)}^2+\frac{1}{2}(\lambda_i-\mu)^2 \\
			& =\frac{1}{2}\mu^2\dist\left(\frac{\tilde{\bz}_i}{\mu},Z^i\right)^2+\frac{1}{2}(\lambda_i-\mu)^2.
		\end{align*}
		We first show that $\psi_i$ is convex on the domain $\mu>0$. Indeed, since \eqref{eq:proj_problem} is jointly convex in $\bzeta$ and $\mu>0$, $\psi_i$ is a convex function as a partial minimization of a convex problem. In particular, $\psi_i$ is strongly convex since it is a sum of a convex and strongly convex functions. 
		Using \cite[Proposition 18.22]{bauschke2011convex} and the chain rule, we obtain the following derivative of ${\psi}_i$
		$$ {\psi}'_i(\mu)=\mu\dist\left(\frac{\tilde{\bz}_i}{\mu},Z^i\right)^2-\tilde{\bz}_i^\top\left(\frac{\tilde{\bz}_i}{\mu}-P_{Z^i}\left(\frac{\tilde{\bz}_i}{\mu}\right)\right)+\mu-\lambda_i=\mu\norm{P_{Z^i}\left(\frac{\tilde{\bz}_i}{\mu}\right)}^2-\tilde{\bz}_i^\top P_{Z^i}\left(\frac{\tilde{\bz}_i}{\mu}\right)+\mu-\lambda_i.$$
		Due to the strong convexity of $\psi(\mu)$, its infimum over $\mu>0$ is attained if and only if there exists $\mu^*>0$ such that ${\psi}'_i(\mu^*)=0$.
		Since $\lim_{\mu\rightarrow\infty}{\psi}'_i(\mu)=\infty$, and due the monotonicity of the gradient of convex functions, such a $\mu^*>0$ exists if and only if 
		$\lim_{\mu\rightarrow 0^+}{\psi}'_i(\mu)<0$ or equivalently $\lim_{\alpha\rightarrow \infty} \tilde{\bz}_i^\top P_{Z^i}(\alpha\tilde{\bz}_i)>-\lambda_i$. In the rest of the proof we show that the latter is always true. 
		
		We first note that since $\bzero\in Z^i$ it follows from \cite[Theorem 9.9]{beck2014introduction} that for any $\alpha>0$ 
		$$\alpha\tilde{\bz}_i^\top P_{Z^i}(\alpha\tilde{\bz}_i)\geq \norm{ P_{Z^i}(\alpha\tilde{\bz}_i)}^2\geq 0,$$
		and thus,
		$\lim_{\alpha\rightarrow \infty} \tilde{\bz}_i^\top P_{Z^i}(\alpha\tilde{\bz}_i)\geq 0$ which implies that the condition is satisfied for all $\lambda_i>0$.
		Moreover, it follows from the definition of the support function that 
		$\tilde{\bz}_i^\top P_{Z^i}(\alpha\tilde{\bz}_i)\leq \sigma_{Z^i}(\tilde{\bz}_i)$ for any $\alpha>0$.
		We will now show that
		$\lim_{\alpha\rightarrow \infty} P_{Z^i}(\alpha\tilde{\bz}_i)\in \partial\sigma_{Z^i}(\tilde{\bz}_i)=\argmax_{\bp\in Z^i}\tilde{\bz}_i^\top\bp$.
		From optimality of the projection we have that $\bq=P_{Z^i}(\alpha\tilde{\bz}_i)$ if and only if
		$\by\in \alpha\tilde{\bz}_i-\partial\delta_{Z^i}(\by)$, so $\alpha\tilde{\bz}_i-\by\in \partial\delta_{Z^i}(\by)$. 
		Moreover, note that by definition of the indicator function and the subdifferential, if $\by \in\partial\delta_{Z^i}(\by)$ the $\alpha\by\in \partial\delta_{Z^i}(\by)$ for all $\alpha>0$, and thus $\tilde{\bz}_i-\frac{\by}{\alpha}\in \partial\delta_{Z^i}(\by)$. 
		Since $\bp\in Z^i$ is bounded, it follows that as $\alpha\rightarrow \infty$ we have that $\tilde{\bz}_i\in \partial\delta_{Z^i}(\by)$, which is equivalent to $\by\in  \argmax_{\bp\in Z^i}\tilde{\bz}_i^\top\bp$. Thus, we established that the condition $\lim_{\alpha\rightarrow \infty} \tilde{\bz}_i^\top P_{Z^i}(\alpha\tilde{\bz}_i)>-\lambda_i$ is equivalent to $\sigma_{Z^i}(\tilde{\bz}_i)>-\lambda$, concluding our proof. \hfill \qedsymbol
	\end{proof} 
	
	\begin{proof}{Proof of Proposition~\ref{proposition:bounded.omega}.}
		We look at the saddle point problem \eqref{eq.sp.harder} where the optimization problem is done over the nonrestricted sets $U^{i,l}$, that is
		\begin{align}
			\min_{\bomega_i } \max_{\substack{\bu_{i,l}\in {U}^{i,l}, \ l\in[s_i]\\\utilde{\lambda}\leq \lambda_{i,s_i} \leq \tilde{\lambda}}}\tilde{g}_i( \bx^* ,\bu_{i,s_i}) + \sum\limits_{l = 1}^{s_i - 1} \bomega_{i, l}^\top (\bu_{i,l} - \bu_{i, s_i}).\label{eq.sp.harder2}
		\end{align}
		We will run the proof in three steps:
		\begin{itemize}
			\item Proving that \eqref{eq.sp.harder} has a saddle point.
			\item Proving that any saddle point of \eqref{eq.sp.harder2} is also a saddle point of \eqref{eq.sp.harder}.
			\item Proving the boundedness of $\bomega^*$ for the problem without the restriction.
		\end{itemize} 
		It will therefore follow that after restricting $\bomega$ problem \eqref{eq.sp.harder} still has saddle points.
		We begin with the first step. Indeed, since
		$$\sup_{\bu_i\in U^i,\; \utilde{\lambda}\leq \lambda_i\leq \tilde{\lambda}}\tilde{g}_i(\bx^*,\bu_{i})\leq \sup_{\bu_i\in U^i}\tilde{g}_i(\bx^*,\bu_{i})<\infty$$
		by the same logic as \eqref{eq:strong_dual}, \eqref{eq.sp.harder2} must have a saddle point.
		
		Moving to the second step, we use the necessary and sufficient optimality conditions of the saddle point formulation to obtain that $(\bomega^\dag,\tilde{\bu}^\dag)$ is a saddle point of \eqref{eq.sp.harder2} where $\bu_{i,l}\in U^{i,l}$ if and only if
		\begin{align*}
			\bu^\dag_{i,l} & = \bu^\dag_{i,s_i} \quad &&l \in [s_i - 1] \\
			0 &\in\bomega^\dag_{i, l} - \partial \delta_{{U}^{i,l}}(\bu^\dag_{i,l}), \quad&& l \in [s_i - 1] \\
			0&\in -\partial_{\bu}(-\tilde{g}_i(\bx^*, \bu^\dag_{i,s_i})) - \sum_{l \in [s_i - 1]} \bomega^\dag_{i,l} - \partial \delta_{\bar{U}^{i,s_i}}(\bu^\dag_{i,s_i}),&&
		\end{align*}
		where $\bar{U}^{i,s_i}=\{\bu_{i,s_i}\in {U}_{i,s}: \utilde{\lambda}\leq\lambda_{i,s_i}\leq \tilde{\lambda}\}\subseteq \tilde{U}^{i,s_i}$. Since $0\leq\utilde{\lambda}\leq \tilde{\lambda} \leq \bar{\lambda}$ it follows that $\bu^\dag_{i,l}\in \tilde{U}^{i,l}\subseteq {U}^{i,l}$ and thus, $\bomega^\dag_{i, l}\in  \partial \delta_{U^{i,l}}(\bu^\dag_{i,l})\subseteq  \partial \delta_{\tilde{U}^{i,l}}(\bu^\dag_{i,l})$  and $- \sum_{l \in [s_i - 1]} \bomega^\dag_{i,l}\in  \partial \delta_{\bar{U}^{i,s_i}}(\bu^\dag_{i,s_i})+\partial_{\bu}(-\tilde{g}_i(\bx^*, \bu^\dag_{i,s_i}))$. Therefore,
		\begin{align*}
			\bu^\dag_{i,l} & = \bu^\dag_{i,s_i} \quad &&l \in [s_i - 1] \\
			0&\in \bomega^\dag_{i, l} - \partial \delta_{\tilde{U}^{i,l}}(\bu^\dag_{i,l}), \quad &&l \in [s_i - 1] \\
			0&\in-\partial_{\bu}(-\tilde{g}_i(\bx^*, \bu^\dag_{i,s_i})) - \sum_{l \in [s_i - 1]} \bomega^\dag_{i,l} - \partial \delta_{\bar{U}^{i,s_i}}(\bu^\dag_{i,s_i}),&&
		\end{align*}
		which are exactly the optimality conditions when using $\tilde{U}{^i,l}$ instead of ${U}^{i,l}$, and so  $(\bomega_i^\dag,\tilde{\bu}_i^\dag)$  is also a saddle point of the restricted problem.
		
		We now move to the last step, showing that $\bomega^\dag$ must be bounded. Note that if for some $i\in[m]$ we have that $\lambda_{i,s_i}^\dag=0$, then the pair $(\bomega_i^\dag,\tilde{\bu}_i^\dag)=(\bzero, \bzero)$ is a saddle point for the $i$th element of the sum, and so restricting the norm of $\bomega_i$ is possible. We therefore continue with bounding $\bomega_i$ for the case where $\tilde{\lambda}>0$. By definition, we have that $\bu^\dag_{i,s_i}=(\bz^\dag_{i,s_i}{\lambda}^\dag_{i,s_i},{\lambda}^\dag_{i,s_i})$ where $0\leq\utilde{\lambda}\leq {\lambda}^\dag_{i,s_i}\leq \tilde{\lambda}\leq \bar{\lambda}$. Also note that it must be that
		$ \bu^\dag_{i,s_i}=\bu^\dag_{i,l}$, otherwise the minimization over $\bomega_i$ would yield minus infinity. Thus, we have that
		$\bz^\dag_{i,s_i}\in \cap_{l=1}^{s_i} Z^{i,l}$.
		Finally, we have that
		\begin{align}
			0&\geq \lambda_{i,s_i}^\dag g_i(\bx^*,\bz^\dag_{i,s_i})  \nonumber \\
			&=\tilde{g}_i( \bx^* ,\bu^\dag_{i,s_i}) +  \sum\limits_{l = 1}^{s_i - 1} (\bomega^\dag_{i, l})^\top (\bu^\dag_{i,l} - \bu^\dag_{i, s_i}) \nonumber \\
			&=\max_{\bu_{i,l}\in \tilde{U}^{i,l}, \ \lambda_{i,s_i} = \lambda_{i,s_i}^\dag}   \tilde{g}_i( \bx^* ,\bu_{i,s_i}) + \sum\limits_{l = 1}^{s_i - 1} (\bomega^\dag_{i, l})^\top (\bu_{i,l} - \bu_{i, s_i}) \label{eq:bound_mu1}\\
			&\geq \max_{\bz_{i,l}\in {Z}^{i,l}}\lambda_{i,s_i}^\dag\left(g_i(\bx^*,\bz_{i,s_i})+ \sum\limits_{l = 1}^{s_i - 1} (\bnu^\dag_{i, l})^\top (\bz_{i,l} - \bz_{i, s_i})\right) \nonumber \\
			&\geq \lambda_{i,s_i}^\dag\left(g_i(\bx^*,\bzero)+\epsilon_i\norm{\bnu^\dag_{i, l'}}\right), \forall l'\in[s_i-1]\label{eq:bound_nu1}
		\end{align}
		where the first inequality comes from the feasibility of $\bx^*$ for all $\bz_i\in Z^i=\cap_{l=1}^{s_i} Z^{i,l}$  and the non-negativity of $\lambda_{i,s_i}^\dag$, the first equality follows from  $ \bu^\dag_{i,s_i}=\bu^\dag_{i,l}$, the second equality follows from the optimality of  $\bu^\dag$ and $\bomega^\dag$ for the saddle point problem, the second inequality follows from choosing $\bu_{i,l}=(\bz_{i,l}\lambda_{i,s_i}^\dag,\lambda_{i,s_i}^\dag)$, and the third inequality follows from choosing $\bz_{i,l}=0$ for all $l\in [s_i]/\{l'\}$ and $\bz_{i,l'}=\epsilon_i\bnu^\dag_{i, l'}$.
		Therefore, if $\lambda_{i,s_i}^\dag>0$ it follows from \eqref{eq:bound_nu1} that
		$\norm{\bnu^\dag_{i, l}}\leq -g_i(\bx^*,\bzero) / \epsilon_i \leq \bar{\mu}_{i} /\epsilon_i$.
		
		Similarly, taking an arbitrary $l'\in [s_i-1]$, we can bound \eqref{eq:bound_mu1} from below by choosing $\bu_{i,l}=(\bzero,{\lambda}^\dag_{i,s_i})$ for all $l\in[s_i]\setminus\{l'\}$ and
		$\bu_{i,l'}=(\bzero,\lambda_{i,l'}^\star)$.
		\begin{align}
			0&\geq 
			\max_{ \bu_{i,l}\in {U}_{i,l}, \ \utilde{\lambda}\leq \lambda_{i,s_i} \leq \tilde{\lambda}}   \tilde{g}_i( \bx^* ,\bu_{i,s_i}) + \sum\limits_{l = 1}^{s_i - 1} (\bomega^\dag_{i, l})^\top (\bu_{i,l} - \bu_{i, s_i}) \nonumber \\ 
			&\geq \lambda_{i,s_i}^\dag g_i(\bx^*,\bzero)+ \sum_{l=1}^{s_i-1}\mu^\dag_{i, l}(\lambda_{i,l}-\lambda_{i,s_i}^\dag)\nonumber \\
			&= \lambda_{i,s_i}^\dag(g_i(\bx^*,\bzero)-\mu^\dag_{i, l'})+\mu^\dag_{i, l'}\lambda_{i,l'}^\star. \nonumber 
		\end{align}
		Since $\lambda_{i,l'}^\star$ can be taken to infinity, it follows that the equality holds only if $\mu^\dag_{i, l'}\leq 0$.
		Moreover, choosing $\lambda_{i,l'}^\star=0$ implies that $-\mu^\dag_{i, l'}\leq  -g_i(\bx^*,\bzero)\leq \bar{\mu}_i$. Since $l'$ was arbitrarily chosen the proof is complete. \hfill \qedsymbol
	\end{proof}
	
	\begin{proof}{Proof of Proposition~\ref{prop.SGSP.convergence}.} In the proof the the proposition, claim \eqref{eq.SGSP.EB.result1} plays the key role, from which \eqref{eq.SGSP.EB.result2} and \eqref{eq.SGSP.EB.result3} follow.
		
		{\bf Proof of \eqref{eq.SGSP.EB.result1}} Denoting $G_\bchi$, $G_{\bu, i}$, $G_{\bw}$ as bounds on the subgradients $\bv_\bchi^k \in \partial_\bchi \breve{L}(\bchi^{k-1},\by^{k-1})$, $\bv_{\tilde{\bu}}^k \in \partial_{\tilde{\bu}}(-\breve{L}(\bchi^{k-1},\by^{k-1}))$, $\bv_\bw^k \in \partial_\bw \breve{L}(\bchi^{k-1},\by^{k-1})$ used throughout the algorithm, the first of the corollary follows directly from \cite[Lemmas 3.1 and 3.2]{nedic2009subgradient}. It is left to prove that under the chosen assumptions these bounds exist and are equal to the stated values. 
		
		We begin with the primal variables $\bchi$. Denoting $\bu^{k-1}_{i,l}=(\tilde{\bz}_{i,l}^{k-1},\lambda_{i,l}^{k-1})$ and defining $\bz_{i,s_i}^{k-1}$ as 
		$$
		\bz^k_i=\begin{cases}\frac{\tilde{\bz}^k_i}{\lambda^k_i}, &\lambda_i>0\\
			\bzero, &\lambda_i=0.
		\end{cases}
		$$
		we have that 
		$$
		\bv_\bchi^k=\begin{bmatrix} \bc +\bba^\top \bw^{k-1} +\sum_{i=1}^m\lambda_{i,s_i}^{k-1}\bd^k_{\bx,i}\\
			(\bu^{k-1}_{1,s_1}-\bu^{k-1}_{1,s_1-1})\\
			\vdots\\
			(\bu^{k-1}_{m,s_m}-\bu^{k-1}_{m,s_m-1})\end{bmatrix}\in\partial_\bchi\breve{L}(\bchi^{k-1},\bu^{k-1}),
		$$
		where $\bd^k_{\bx,i}\in \partial_\bx g_i(\bx^{k-1},\bz^{k-1}_{i,s_i})$. By boundedness of the subgradients, $\lambda_i$ and $\bw$ we can bound the first component
		$$
		\norm{\bc + \bba^\top \bw^{k-1} + \sum_{i=1}^m\lambda_i^{k-1}\bd^k_{\bx,i}}\leq \norm{\bc}+ \|\bba \| R_\bw + \sum_{i=1}^m \bar{\lambda}G_{\bx,i}.
		$$
		By the definition of $\tilde{U}^{i,l}$ we have that for all $i\in[m]$ and $l\in[s_i-1]$
		\begin{align*}
			\norm{\bu^{k-1}_{i,s_i}-\bu^{k-1}_{i,l}} & \leq (\lambda^{k-1}_{i,s_i}-\lambda_{i,l}^{k-1}) +\norm{\tilde{\bz}_{i,s_i}-\tilde{\bz}_{i,l}} \\
			& \leq 2 \bar{\lambda}+\bar{\lambda}\max_{\bz_{i,s_i}\in Z^{i,s_i},\bz_{i,l}\in Z^{i,l}}\norm{\bz_{i,s_i}-\bz_{i,l}}\leq \bar{\lambda}(2+R_{i,s_i}+R_{i,l}).
		\end{align*}
		Adding these two bounds, we obtain the following bound:
		\begin{align*}
			\norm{\bv^{k}_{\bchi}} 
			&\leq \norm{\bc}+ \|\bba \| R_\bw +\sum_{i\in[m]} \bar{\lambda}G_{\bx,i}+\bar{\lambda}\sum_{i\in[m]}\left((s_i-1)(2+R_{i,s_i})+\sum_{l\in[s_i-1]}R_{i,l}\right)=G_\bchi\end{align*}	
		Now, we will bound the norms of the subgradients corresponding to the dual variables $\bu_{i,l}$, $\bw$:
		$$
		\bv_{\tilde{\bu}_i}^k = \left[ \begin{array}{c} \bv_{i,1}^k \\ \vdots \\ \bv_{i, s_i}^k \end{array} \right]
		$$
		First, consider the subgradients $\bv_{i, s_i}$ with respect to $\bu_{i, s_i}$. According to Lemma~\ref{lem:subdif_def} 
		\begin{align*}
			\bv_{i,s_i}^k= \begin{bmatrix} \lambda^{k-1}_{i,s_i} \bd^k_{\bz,i}\\
				(-g_i)(\bx^{k-1},\bz^{k-1}_{i,s_i})+(\bd^{k}_{\bz,i})^\top\bz^{k-1}_{i,s_i}
			\end{bmatrix}-\sum_{l\in[s_i-1]} \bomega^{k-1}_{i,l}\in \partial_{\bu_{i,s_i}}(-\breve{L})(\bchi^{k-1},\bu^{k-1}),
		\end{align*}
		where $\bd^{k}_{\bz,i}=\partial_{\bz_i } (-g_i)(\bx^{k-1},\bz_{i,s_i}^{k-1})$. From the definitions of the sets $\tilde{U}^{i,l}$,$\Omega^{i,l}$, and $W$ we get:
		\begin{align*}
			\norm{\bv_{i,s_i}^k}& \leq \norm{\lambda^{k-1}_{i,s_i} \bd^k_{\bz,i}}+|{(-g_i)(\bx^{k-1},\bz^{k-1}_{i,s_i})|+\norm{(\bd^{k}_{\bz,i})}\norm{\bz^{k-1}_{i,s_i}}}+\sum_{l\in[s_i-1]}\norm{\bomega^{k-1}_{i,l}}\\
			&\leq (\bar{\lambda}+R_{i,s_i})G_{\bz,i}+\bar{g}_i+\bar{\mu}_i(s_i-1)\left(1+\frac{1}{\epsilon_i} \right).
		\end{align*}
		Moreover, for any $l\in[s_i-1]$ it follows from the definition $\Omega^{i,l}$ that
		$$
		\bv_{i,l}^k=-\bomega^{k-1}_{i,l}\in \partial_{\bu_{i,l}}(-\breve{L})(\bchi^{k-1},\by^{k-1}) \quad \Rightarrow \quad \norm{\bv_{i,l}^k}\leq \bar{\mu}_i \left(1+\frac{1}{\epsilon_i} \right).
		$$
		In the end, for the subgradients w.r.t. $\bw$, we have $\bv_\bw^k = -\bba \bx^{k-1} + \bb$ so by boundedness of $X$ we obtain the bound $\norm{\bv_\bw^k} \leq \norm{\bba} R_\bx + \norm{\bb}$. Using the defined $\bv^k_\chi$, $\bv^k_{i,l}$ and $\bv_\bw^k$ in the algorithm we obtain the desired result. \hfill \qedsymbol
		
		{\bf Proof of \eqref{eq.SGSP.EB.result2}} To prove this claim, we will use \eqref{eq.SGSP.EB.result1}. First, define $\bu_i^\dag = \lambda_i \bz_i^\dag$ where $\bz_i^\dag = \argmax_{\bz_i \in Z^i} g(\bar{\bx}^N, \bz_i)$. Following \eqref{eq:strong_dual} we have that
		\begin{align}
			L(\bar{\bx}^N; (\blambda, \bw ) ) &= \bc^\top \bar{\bx}^N +\sum_{i\in[m]}\tilde{g}_i( \bar{\bx}^N,\bu_i^\dag)+ \bw^\top \left( \bba \bar{\bx}^N - \bb \right) \nonumber\\
			&= \min_{\bomega}\max_{\substack{\bu_{i,l}\in \tilde{U}^{i,l},\ \lambda_{i,l}=\lambda_i\\ l\in[s_i],\ i\in[m]}} \bc^\top \bar{\bx}^N +\sum_{i\in[m]}\tilde{g}_i( \bar{\bx}^N,\bu_{i,s_i})+ \bw^\top (\bba \bar{\bx}^N - \bb) + \sum\limits_{i = 1}^m \sum\limits_{l = 1}^{s_i - 1} {\bomega}_{i, l}^\top (\bu_{i,l} - \bu_{i, s_i}) \nonumber\\
			&\leq \max_{\substack{\bu_{i,l}\in \tilde{U}^{i,l},\ \lambda_{i,l}=\lambda_i\\ l\in[s_i],\ i\in[m]}}  \bc^\top \bar{\bx}^N +\sum_{i\in[m]}\tilde{g}_i( \bar{\bx}^N,\bu_{i,s_i})+ \bw^\top (\bba \bar{\bx}^N - \bb) + \sum\limits_{i = 1}^m \sum\limits_{l = 1}^{s_i - 1} (\bar{\bomega}_{i, l}^N)^\top (\bu_{i,l} - \bu_{i, s_i}) \nonumber\\
			&= \max_{\substack{\bu_{i,l}\in \tilde{U}^{i,l},\ \lambda_{i,l}=\lambda_i\\ l\in[s_i],\ i\in[m]}}  \breve{L}((\bar{\bx}^N,\bar{\bomega}^N ),(\tilde{\bu},\bw)). \label{eq:firstbound}
		\end{align}
		We move on to lower bounding $ L(\bx^*, (\bar{\blambda}^{N}, \bar{\bw}^{N}))$ with a term of the form $\breve{L}((\bx^*,\bomega ),\bar{\by}^N)$:
		\begin{align} 
			&   L(\bx^*,( \bar{\blambda}^{N}, \bar{\bw}^N)) \nonumber \\
			= & \max_{\bu_{i} \in\bar{U}_i, \ \bar{\lambda}^N_{i} = \bar{\lambda}^N_{i,s_i} } \bc^\top \bx^* + \sum_{i\in[m]}\tilde{g}_i( \bx^* ,\bu_i) \nonumber \\
			= & \min_{\bomega } \max_{\substack{\bu_{i,l}\in\tilde{U}^{i,l},\ l\in[s_i] \\ \lambda_{i,s_i} = \bar{\lambda}^N_{i,s_i}}} \bc^\top \bx^* + \bar{\bw}^{N\top} (\bba \bx^* - \bb) + \sum_{i\in[m]}\tilde{g}_i( \bx^* ,\bu_{i,s_i}) + \sum\limits_{i = 1}^m \sum\limits_{l = 1}^{s_i - 1} \bomega_{i, l}^\top (\bu_{i,l} - \bu_{i, s_i}) \nonumber \\
		\end{align}
		\begin{align}
			= & \min_{\bomega\in \Omega } \max_{\substack{\bu_{i,l}\in\tilde{U}^{i,l},\ l\in[s_i] \\ \lambda_{i,s_i} = \bar{\lambda}^N_{i,s_i}}}  \bc^\top \bx^* + \bar{\bw}^{N\top} (\bba \bx^* - \bb) + \sum_{i\in[m]}\tilde{g}_i( \bx^* ,\bu_{i,s_i}) + \sum\limits_{i = 1}^m \sum\limits_{l = 1}^{s_i - 1} \bomega_{i, l}^\top (\bu_{i,l} - \bu_{i, s_i}) \nonumber\\
			= & \max_{\substack{\bu_{i,l}\in\tilde{U}^{i,l},\ l\in[s_i] \\ \lambda_{i,s_i} = \bar{\lambda}^N_{i,s_i}}}  \min_{\bomega \in \Omega} \bc^\top \bx^* + \bar{\bw}^{N\top} (\bba \bx^* - \bb) + \sum_{i\in[m]}\tilde{g}_i( \bx^* ,\bu_{i,s_i}) + \sum\limits_{i = 1}^m \sum\limits_{l = 1}^{s_i - 1} \bomega_{i, l}^\top (\bu_{i,l} - \bu_{i, s_i}) \nonumber\\
			\geq & \min_{\bomega \in \Omega} \bc^\top \bx^* + \bar{\bw}^{N\top} (\bba \bx^* - \bb) + \sum_{i\in[m]}\tilde{g}_i( \bx^* , \bar{\bu}^N_{i,s_i}) + \sum\limits_{i = 1}^m \sum\limits_{l = 1}^{s_i - 1} \bomega_{i, l}^\top (\bar{\bu}^N_{i,l} - \bar{\bu}^N_{i, s_i}) \nonumber\\
			= & \min_{\bomega \in \Omega} \breve{L}((\bx^*,\bomega), \bar{\by}^N) \label{eq:secondbound}
		\end{align}
		where the first equality follows \eqref{eq:strong_dual}, the second equality follows from Proposition~\ref{proposition:bounded.omega}, the third equality follows from the fact that existence of a saddle point, established in \eqref{eq:strong_dual}, and the final inequality follows from the definition of $\breve{L}$. Moreover, for any $\bar{\blambda}_i^{N}\equiv \bar{\blambda}_{i,s_i}^{N}$, and $\bar{\bw}^N$, it follows from $(\bx^*,(\blambda^*,\bw^*))$ being a saddle point of $L$ that
		\begin{equation}\label{eq:thirdbound} L(\bx^*,( \bar{\blambda}^{N}, \bar{\bw}^N))\leq L(\bx^*,(\blambda^*,\bw^*)).\end{equation}
		Combining \eqref{eq:firstbound}, \eqref{eq:secondbound}, and \eqref{eq:thirdbound} we have that for any $\lambda_i\in [0,\bar{\lambda}]$ and any $\bw\in W$ the following holds
		\begin{align}
			& L(\bar{\bx}^N, (\blambda, \bw)) - L(\bx^*, ({\blambda}^*, {\bw}^*)) \nonumber \\
			\leq & L(\bar{\bx}^N, (\blambda, \bw)) - L(\bx^*, (\bar{\blambda}^N, \bar{\bw}^N)) \nonumber\\
			\leq & \max_{\bomega \in \Omega} \max_{\substack{\by=(\tilde{\bu},\bw):\\\bu_{i,l}\in \tilde{U}^{i,l},\ \lambda_{i,l}=\lambda_i\\ l\in[s_i],\ i\in[m]}} \left( \breve{L}((\bar{\bx}^N,\bar{\bomega}^N ),\by)   - \breve{L}((\bx^*,\bomega),\bar{\by}^N) \right) \nonumber  \\
			\leq &   \max_{\bomega \in \Omega} \max_{\substack{\bu_{i,l}\in \tilde{U}^{i,l},\ \lambda_{i,l}=\lambda_i\\ l\in[s_i],\ i\in[m]}}  \frac{1}{2\sqrt{N}} \left(\frac{\norm{\bx^{0}-\bx^*}^2+\norm{\bomega^{0}-\bomega}^2}{\tilde{\tau}} + \tilde{\tau}G_\bchi^2 \right.\nonumber\\
			& \left.\qquad\qquad\qquad\qquad\qquad\qquad+ \sum\limits_{i=1}^m \left( \sum_{l=1}^{s_i} \frac{\norm{\bu_{i,l}^{0} - \bu_{i,l}}^2}{\tilde{\theta}_i} 
			+ \tilde{\theta}_i G_{\tilde{\bu}_i}^2 \right) + \frac{\norm{\bw^{0}-\bw}^2}{\tilde{\theta}_\bw}+\tilde{\theta}_\bw G_\bw^2 \right), \label{eq:SPSG_is_EB1a}
		\end{align}
		where the last inequality follows from \eqref{eq.SGSP.EB.result1}.
		Now we can use the fact that for any $\bomega, \bomega^{0} \in\Omega$
		$$
		\norm{\bomega^{0}-\bomega}\leq 2 \sum_{i=1}^m\bar{\mu}_i(s_i-1)( 1 + 1 / \epsilon_i)
		$$
		and for any $\bu_{i,l}^0,\bu_{i,l}\in \tilde{U}^{i,l}$ such that $\bu_{i,l}=(\tilde{\bz}_{i,l},\lambda_{i,l}),\;\lambda_{i,l}=\lambda_i$ 
		$$
		\norm{\bu_{i,l}^{0}-\bu_{i,l}}^2\leq \max\{\lambda_i^0,\lambda_i\}^2(1+4R^2_{i,l}).
		$$
		In the end we have
		$$
		\norm{\bw^{0}-\bw}^2 \leq \frac{2 \max \{ \|\bw\| , \|\bw_0\| \}^2}{ \tilde{\theta}_\bw }
		$$
		We can use these inequalities and the bounds on variables to bound \eqref{eq:SPSG_is_EB1a} and the definition of $\phi$ and $\sigma_i$ we obtain the desired result. 
		
		{\bf Proof of \eqref{eq.SGSP.EB.result3}} For the last claim we use the fact that if $((\bx^*,\bomega^*),(\tilde{\bu}^*,\bw^*))$ is a saddle point of $\breve{L}$ then $(\bx^*,(\blambda^*,\bw^*))$ where $\lambda_i^*=\lambda_{i,s_i}^*$ is a saddle point of $L$. Thus,
		$$L(\bar{\bx}^N, (\blambda^*, \bw^*))\geq L({\bx}^*, (\blambda^*, \bw^*))=\bc^\top\bx^*,$$
		where the inequality follows from $({\bx}^*, (\blambda^*, \bw^*))$ being a saddle point, and the equality follows from the saddle point value for the Lagrangian being equal to the optimal primal objective value. \hfill \qedsymbol
	\end{proof}
	\shimrit{
		\begin{proof}{Proof of Proposition~\ref{prop.CP.convergence}.}
			The first part of the proposition is a direct result of \cite[Theorem 1]{chambolle2011first}. To prove the second part, the strategy is the same as in the SGSP case: we upper (lower) bound the first (second) terms in the left-hand side of \eqref{eq.CP.EB.result2} with terms that are like the ones in the LHS of \eqref{eq.CP.EB.result1} albeit over a bounded domain, and then suprimize the RHS of \eqref{eq.CP.EB.result1} over that domain. Defining $\bu^\dag_{i,l} = \lambda_i(\bz^\dag_i,1)$ for all $l = 1,\ldots, s_i$, where $\bz^\dag_i := \argmax_{\bz_i\in Z^i} g_i(\bar{\bx}^N,\bz_i)$, we then have:
			\begin{align}
				L(\bar{\bx}^N,(\blambda, \bw))
				= & \bc^\top \bar{\bx}^N + \bw^\top (\bba \bar{\bx}^N - \bb) + \sum_{i \in [m]}  \tilde{g}_i(\bar{\bx}^N, \bu^\dag_i)  \nonumber \\ 
				= & \bc^\top \bar{\bx}^N + \bw^\top (\bba \bar{\bx}^N - \bb) + \sum_{i \in [m]}  \tilde{g}_i(\bar{\bx}^N, \bu^\dag_{i,s_i})+\sum_{l\in[s_i-1]}(\bar{\bomega}^N_{i,l})^\top(\bu^\dag_{i,l}-\bu^\dag_{i,s_i}) \nonumber \\
				= & \check{L}(\bar{\bchi}^N, (\tilde{\bu}^\dag, \bw)). \label{eq.papc.composite.bound.1}
			\end{align}
			where the first equality follows from the definition of $L$, the second equality follows from $\bu^\dag_{i,l}=\bu^\dag_{i,s_i}$ for all $l\in[s_i-1]$ and $i\in[m]$, and the third equality follows from definition of $\check{L}$. Since $(\bx^*, (\blambda^*, \bw^*))$ is a saddle point of $L$, it can be extended to $(\bchi^*,\by^*)$ which is a saddle point of $\check{L}$, and by definition
			$$
			L(\bx^*, (\blambda^*, \bw^*)) = \check{L}({\bchi}^*, \by^*) \geq \check{L}({\bchi}^*, \bar{\by}^N),
			$$
			which combined with inequalities \eqref{eq.CP.EB.result1} (with $\mathcal{B}_1=\{\bchi^*\}$ and $\mathcal{B}_2=\{\by=({\bu}^\dag,\bw): \bu^\dag_{i,l}=\bu^\dag_{i,s_i},\;
			\bu^\dag_{i,s_i}=\lambda_i(\bz^\dag_i,1),\;
			\bz_{i}^\dag\in \argmax_{\bz\in Z^i} g_i(\bar{\bx}^N,\bz),\;i\in[m],\;l\in[s_i-1]\}$) and \eqref{eq.papc.composite.bound.1} gives
			\begin{align}
				L(\bar{\bx}^N,(\blambda, \bw)) - L(\bx^*, (\blambda^*, \bw^*)) &\leq \max_{\by\in \mathcal{B}_2}\check{L}(\bar{\bchi}^N,\by)-\check{L}(\bchi^*,\bar{\by}^N) \nonumber\\
				&\leq  \max_{\by\in \mathcal{B}_2}\frac{\tau^{-1} \| \bchi^* - \bchi^0 \|^2 + \sigma^{-1}\| \by- \by^0 \|^2 }{2N},  \label{eq.cor.6.gap_bound}
			\end{align}
			From Proposition~\ref{proposition:bounded.omega} and $\bomega^0=\bzero$ we have that
			\begin{align}\label{eq.cor.6.primal_bound}
				\norm{ \bchi^* - \bchi^0 }^2 &=\norm{\bx^* - \bx^0}^2+ \norm{\bomega^*-\bomega^0}^2\leq \norm{\bx^* - \bx^0}^2 +  \sum_{i=1}^m (s_i-1)\bar{\mu}_i^2\left( 1+\frac{1}{\epsilon_i} \right)^2.
			\end{align}
			The boundedness of $Z^{i}$ implies that
			\begin{align} \nonumber
				\sup_{\substack{\by=(\tilde{\bu}^\dag, \bw): \bu^\dag_{i,l}=\bu^\dag_{i,s_i}\\
						\bu^\dag_{i,s_i}=\lambda_i(\bz^\dag_i,1)\\
						\bz_{i}^\dag\in \argmax_{\bz\in Z^i} g_i(\bar{\bx}^N,\bz)}}\norm{ \by- \by^0 }^2  
				& \leq \sum_{i\in[m]}  \sum_{l\in[s_i]} \max_{\bu_{i,l} = \lambda_i \bz_{i,l}:\bz_{i,l}\in Z^{i,l}}\norm{\bu_{i,l} - \bu^0_{i,l}}^2 +  \norm{\bw - \bw^0}^2 \nonumber \\
				& \leq \sum_{i\in[m]}  \sum_{l\in[s_i]} (1+4R^2_{i,l})\max\{\lambda_i,\lambda_{i}^0\}^2 + 2 \max\{ \norm{\bw}, \norm{\bw^0} \}^2 \nonumber\\
				& = \sum_{i\in[m]}  \sigma_i \max\{\lambda_{i}, \lambda_{i}^0\}^2 + 2 \max\{ \norm{\bw}, \norm{\bw^0} \}^2 \label{eq.cor.6.dual_bound}
			\end{align}
			Combining \eqref{eq.cor.6.gap_bound}, \eqref{eq.cor.6.primal_bound}, and \eqref{eq.cor.6.dual_bound} we obtain \eqref{eq.CP.EB.result2}. The last part follows by the same argument as in the case of SGSP.
			
	\end{proof}}
	
\end{APPENDIX}


\begin{thebibliography}{31}
	\providecommand{\natexlab}[1]{#1}
	\providecommand{\url}[1]{\texttt{#1}}
	\providecommand{\urlprefix}{URL }
	
	\bibitem[{Alacaoglu et~al.(2021)Alacaoglu, Malitsky, \protect\BIBand{}
		Cevher}]{alacaoglu2021forward}
	Alacaoglu A, Malitsky Y, Cevher V (2021) Forward-reflected-backward method with
	variance reduction. \emph{Computational optimization and applications}
	80(2):321--346.
	
	\bibitem[{Andersson et~al.(2019)Andersson, Gillis, Horn, Rawlings,
		\protect\BIBand{} Diehl}]{Andersson2019}
	Andersson JAE, Gillis J, Horn G, Rawlings JB, Diehl M (2019) {CasADi} -- {A}
	software framework for nonlinear optimization and optimal control.
	\emph{Mathematical Programming Computation} 11(1):1--36,
	\urlprefix\url{http://dx.doi.org/10.1007/s12532-018-0139-4}.
	
	\bibitem[{Auslender \protect\BIBand{} Teboulle(2009)}]{auslender2009projected}
	Auslender A, Teboulle M (2009) Projected subgradient methods with non-euclidean
	distances for non-differentiable convex minimization and variational
	inequalities. \emph{Mathematical Programming} 120(1):27--48.
	
	\bibitem[{Bauschke \protect\BIBand{} Combettes(2011)}]{bauschke2011convex}
	Bauschke HH, Combettes PL (2011) \emph{Convex analysis and monotone operator
		theory in Hilbert spaces} (Springer).
	
	\bibitem[{Ben-Tal et~al.(2015{\natexlab{a}})Ben-Tal, den Hertog,
		\protect\BIBand{} Vial}]{BenTal2015}
	Ben-Tal A, den Hertog D, Vial JP (2015{\natexlab{a}}) Deriving robust
	counterparts of nonlinear uncertain inequalities. \emph{Mathematical
		Programming} 149(1):265--299, ISSN 1436-4646,
	\urlprefix\url{http://dx.doi.org/10.1007/s10107-014-0750-8}.
	
	\bibitem[{Ben-Tal et~al.(2009)Ben-Tal, {El Ghaoui}, \protect\BIBand{}
		Nemirovski}]{bental2009robust}
	Ben-Tal A, {El Ghaoui} L, Nemirovski A (2009) \emph{Robust optimization}
	(Princeton University Press).
	
	\bibitem[{Ben-Tal et~al.(2015{\natexlab{b}})Ben-Tal, Hazan, Koren,
		\protect\BIBand{} Mannor}]{bental2015oracle}
	Ben-Tal A, Hazan E, Koren T, Mannor S (2015{\natexlab{b}}) Oracle-based robust
	optimization via online learning. \emph{Operations Research} 63(3):628--638.
	
	\bibitem[{Bertsimas et~al.(2011)Bertsimas, Brown, \protect\BIBand{}
		Caramanis}]{bertsimas2011theory}
	Bertsimas D, Brown DB, Caramanis C (2011) Theory and applications of robust
	optimization. \emph{SIAM {R}eview} 53(3):464--501.
	
	\bibitem[{Bienstock(2007)}]{bienstock2007histogram}
	Bienstock D (2007) Histogram models for robust portfolio optimization.
	\emph{Journal of Computational Finance} 11(1):1.
	
	\bibitem[{Chambolle \protect\BIBand{} Pock(2011)}]{chambolle2011first}
	Chambolle A, Pock T (2011) A first-order primal-dual algorithm for convex
	problems with applications to imaging. \emph{Journal of mathematical imaging
		and vision} 40(1):120--145.
	
	\bibitem[{Combettes \protect\BIBand{}
		M{\"u}ller(2018)}]{combettes2018perspective}
	Combettes PL, M{\"u}ller CL (2018) Perspective functions: Proximal calculus and
	applications in high-dimensional statistics. \emph{Journal of Mathematical
		Analysis and Applications} 457(2):1283--1306.
	
	\bibitem[{Gabrel et~al.(2014)Gabrel, Murat, \protect\BIBand{}
		Thiele}]{gabrel2014recent}
	Gabrel V, Murat C, Thiele A (2014) Recent advances in robust optimization: An
	overview. \emph{European Journal of Operational Research} 235(3):471--483.
	
	\bibitem[{Gidel et~al.(2017)Gidel, Jebara, \protect\BIBand{}
		Lacoste-Julien}]{gidel2017frank}
	Gidel G, Jebara T, Lacoste-Julien S (2017) Frank-wolfe algorithms for saddle
	point problems. \emph{Artificial Intelligence and Statistics}, 362--371
	(PMLR).
	
	\bibitem[{Hazan(2016)}]{Hazan2016}
	Hazan E (2016) Introduction to online convex optimization. \emph{Foundations
		and Trends in Optimization} 2(3-4):157--325, ISSN 2167-3888,
	\urlprefix\url{http://dx.doi.org/10.1561/2400000013}.
	
	\bibitem[{Ho-Nguyen \protect\BIBand{}
		K{\i}l{\i}n{\c{c}}-Karzan(2018)}]{ho2018online}
	Ho-Nguyen N, K{\i}l{\i}n{\c{c}}-Karzan F (2018) Online first-order framework
	for robust convex optimization. \emph{Operations Research} 66(6):1670--1692.
	
	\bibitem[{Ho-Nguyen \protect\BIBand{}
		K{\i}l{\i}n{\c{c}}-Karzan(2019)}]{ho2019exploiting}
	Ho-Nguyen N, K{\i}l{\i}n{\c{c}}-Karzan F (2019) Exploiting problem structure in
	optimization under uncertainty via online convex optimization.
	\emph{Mathematical Programming} 177(1):113--147.
	
	\bibitem[{Jeyakumar \protect\BIBand{} Li(2014)}]{jeyakumar2014trust}
	Jeyakumar V, Li G (2014) Trust-region problems with linear inequality
	constraints: exact sdp relaxation, global optimality and robust optimization.
	\emph{Mathematical Programming} 147(1-2):171--206.
	
	\bibitem[{Juditsky \protect\BIBand{} Nemirovski(2011)}]{juditsky2011first}
	Juditsky A, Nemirovski A (2011) First order methods for nonsmooth convex
	large-scale optimization, ii: utilizing problems structure. S SN Sra, Wright
	SJ, eds., \emph{Optimization for Machine Learning}, chapter~6, 149--181 (MIT
	press Cambridge, MA).
	
	\bibitem[{Korpelevich(1976)}]{korpelevich1977extragradient}
	Korpelevich G (1976) Extragradient method for finding saddle points and other
	problems. \emph{Ekonomika i Matematicheskie Metody} 12(4):747--756.
	
	\bibitem[{Malitsky \protect\BIBand{} Tam(2020)}]{Malitsky2020}
	Malitsky Y, Tam MK (2020) A forward-backward splitting method for monotone
	inclusions without cocoercivity. \emph{SIAM Journal on Optimization}
	30(2):1451--1472.
	
	\bibitem[{Mutapcic \protect\BIBand{} Boyd(2009)}]{mutapcic2009cutting}
	Mutapcic A, Boyd S (2009) Cutting-set methods for robust convex optimization
	with pessimizing oracles. \emph{Optimization Methods \& Software}
	24(3):381--406.
	
	\bibitem[{Nedi{\'c} \protect\BIBand{} Ozdaglar(2009)}]{nedic2009subgradient}
	Nedi{\'c} A, Ozdaglar A (2009) Subgradient methods for saddle-point problems.
	\emph{Journal of Optimization Theory and Applications} 142(1):205--228.
	
	\bibitem[{Nemirovski(1994)}]{LectureNem94}
	Nemirovski A (1994) Information based complexity of convex programming. Lecture
	notes, \urlprefix\url{http://www2.isye.gatech.edu/~nemirovs/Lec_EMCO.pdf}.
	
	\bibitem[{Nemirovski(2004)}]{nemirovski2004prox}
	Nemirovski A (2004) Prox-method with rate of convergence o (1/t) for
	variational inequalities with lipschitz continuous monotone operators and
	smooth convex-concave saddle point problems. \emph{SIAM Journal on
		Optimization} 15(1):229--251.
	
	\bibitem[{Nesterov(2007)}]{nesterov2007dual}
	Nesterov Y (2007) Dual extrapolation and its applications to solving
	variational inequalities and related problems. \emph{Mathematical
		Programming} 109(2):319--344.
	
	\bibitem[{Optimization(2020)}]{gurobi}
	Optimization G (2020) Gurobi optimizer reference manual.
	\urlprefix\url{http://www.gurobi.com}.
	
	\bibitem[{Ouyang \protect\BIBand{} Xu(2021)}]{ouyang2021lower}
	Ouyang Y, Xu Y (2021) Lower complexity bounds of first-order methods for
	convex-concave bilinear saddle-point problems. \emph{Mathematical
		Programming} 185(1):1--35.
	
	\bibitem[{Postek \protect\BIBand{} Shtern(2024)}]{our_repo}
	Postek K, Shtern S (2024) First-order algorithms for robust optimization
	problems via convex-concave saddle-point lagrangian reformulation.
	\urlprefix\url{http://dx.doi.org/10.1287/ijoc.2022.0200}, available for
	download at http://github.com/INFORMSJoC/2022.0200.
	
	\bibitem[{Tseng(1991)}]{tseng1991applications}
	Tseng P (1991) Applications of a splitting algorithm to decomposition in convex
	programming and variational inequalities. \emph{SIAM Journal on Control and
		Optimization} 29(1):119--138.
	
	\bibitem[{W{\"a}chter \protect\BIBand{}
		Biegler(2006)}]{wachter2006implementation}
	W{\"a}chter A, Biegler LT (2006) On the implementation of an interior-point
	filter line-search algorithm for large-scale nonlinear programming.
	\emph{Mathematical Programming} 106(1):25--57.
	
	\bibitem[{Zhang et~al.(2022)Zhang, Hong, \protect\BIBand{}
		Zhang}]{zhang2022lower}
	Zhang J, Hong M, Zhang S (2022) On lower iteration complexity bounds for the
	convex concave saddle point problems. \emph{Mathematical Programming}
	194(1-2):901--935.
	
\end{thebibliography}
\end{document}